\documentclass[12pt]{amsart}

\usepackage{stmaryrd}
\usepackage{mathtools}
\def\multichoose#1#2{\ensuremath{\left(\kern-.3em\left(\genfrac{}{}{0pt}{}{#1}{#2}\right)\kern-.3em\right)}}
\def\mbinom#1#2{\ensuremath{\left(\kern-.3em\left(\genfrac{}{}{0pt}{}{#1}{#2}\right)\kern-.3em\right)}}

\newcommand{\llb}[0]{\llbracket}
\newcommand{\rrb}[0]{\rrbracket}

\usepackage{graphicx}
\usepackage{color}
\usepackage{etoolbox}
\usepackage[margin=1in]{geometry}
\usepackage{amsmath,amsthm,amssymb,tikz,tikz-cd}
\usepackage{hyperref}
\hypersetup{
    colorlinks = true,
    citecolor = orange,%
}
\usepackage[noabbrev]{cleveref}
\usepackage{subcaption}

\makeatletter
\patchcmd{\@settitle}{\uppercasenonmath\@title}{}{}{}
\patchcmd{\@setauthors}{\MakeUppercase}{}{}{}
\patchcmd{\section}{\scshape}{}{}{}
\makeatother
\makeatletter
\patchcmd{\@maketitle}
  {\ifx\@empty\@dedicatory}
  {\ifx\@empty\@date \else {\vskip2ex 
  \centering\footnotesize\@date\par\vskip1ex}\fi
   \ifx\@empty\@dedicatory}
  {}{}
\patchcmd{\@adminfootnotes}
  {\ifx\@empty\@date\else \@footnotetext{\@setdate}\fi}
  {}{}{}
\makeatother

\newcommand{\pres}[1]{\prescript{#1}{}}

\theoremstyle{plain}
\newtheorem{theorem}{Theorem}[section]

\newtheorem{lemma}[theorem]{Lemma}
\newtheorem{prop}[theorem]{Proposition}
\newtheorem{conj}[theorem]{Conjecture}

\newtheorem{corollary}[theorem]{Corollary}

\theoremstyle{definition}
\newtheorem{remark}[theorem]{Remark}
\newtheorem{example}[theorem]{Example}
\newtheorem{definition}[theorem]{Definition}

\newcommand{\calG}[0]{\mathcal{G}}
\newcommand{\wt}{\mathrm{wt}}

\title
[Higher dimer covers on snake graphs]
{\large Higher Dimer Covers on Snake Graphs}

\author[G. Musiker]{Gregg Musiker$^\clubsuit$}
\author[N. Ovenhouse]{Nicholas Ovenhouse$^\heartsuit$}
\author[R. Schiffler]{Ralf Schiffler$^\spadesuit$}
\author[S. W. Zhang]{Sylvester W. Zhang$^\diamondsuit$}

\thanks{$^\clubsuit$\href{mailto:musiker@umn.edu}{musiker@umn.edu} University of Minnesota.}

\thanks{$^\heartsuit$\href{mailto:ovenhou3@msu.edu}{ovenhou3@msu.edu} Michigan State University}
\thanks{$^\spadesuit$\href{mailto:schiffler@math.uconn.edu}{schiffler@math.uconn.edu} University of Connecticut}
\thanks{$^\diamondsuit$\href{mailto:sylvesterzhang@math.ucla.edu}{sylvesterzhang@math.ucla.edu} UCLA}

\keywords{continued fractions, dimer covers, snake graphs}
  
\subjclass{05A15, 05C70, 11A55}

\begin{document}

\begin{abstract}
Snake graphs are a class of planar graphs that are important in the theory of cluster algebras. Indeed, the Laurent expansions of the cluster variables in cluster algebras from surfaces are given as weight generating functions for 1-dimer covers (or perfect matchings) of snake graphs. 
Moreover, the enumeration of 1-dimer covers of snake graphs provides a combinatorial interpretation of continued fractions. In particular, the number of 1-dimer covers of the snake graph $\mathcal{G}[a_1,\ldots,a_n]$ is the numerator of the continued fraction $[a_1,\ldots,a_n]$. This number is equal to the top left entry of the matrix product $\left(\begin{smallmatrix} a_1&1\\1&0 \end{smallmatrix}\right)
\cdots \left(\begin{smallmatrix} a_n&1\\1&0 \end{smallmatrix}\right)$.

In this paper, we give enumerative results on $m$-dimer covers of snake graphs. We show that the number of $m$-dimer covers of the snake graph $\mathcal{G}[a_1,\ldots,a_n]$ is the top left entry of a product of analogous $(m+1)$-by-$(m+1)$ matrices. 
We discuss how our enumerative results are related to other known combinatorial formulas,
and we suggest a generalization of continued fractions based on our methods. 
These generalized continued fractions provide some interesting open questions and
a possibly novel approach towards Hermite's problem for cubic irrationals.
\end{abstract}

\maketitle

\setcounter{tocdepth}{1}
\tableofcontents

\section{Introduction}
Snake graphs first appeared in the theory of cluster algebras. For a cluster algebra from a marked surface, 
the papers \cite{ms10,msw_11} established a combinatorial formula for the Laurent expansion of a cluster variable as a weighted sum 
over all dimer covers (perfect matchings) of an associated snake graph. In \cite{msw13}, snake graphs were used to  construct canonical bases for the cluster algebra.
Later it was shown in \cite{cs_18} that there is a bijection between continued fractions $[a_1,a_2,\ldots,a_n] $ and snake graphs $\mathcal{G}[a_1,a_2,\ldots,a_n]$ 
such that the number of dimer covers of the snake graph equals the numerator of the continued fraction. See Theorem~\ref{thm1} for a precise statement.
Moreover, this equation of integers can be lifted to the cluster algebra expressing the cluster variables as (numerators of) continued fractions of 
Laurent monomials \cite{cs_18,rab}. In \cite{bms,ls_19} this connection was used to express the Jones and the Alexander polynomial 
of 2-bridge knots as specializations  of cluster variables, and in \cite{llrs} to study the order relation on Markov numbers.

In \cite{moz1, moz2} it was shown that the double dimer covers of snake graphs arise combinatorially in super cluster algebras. 
Indeed, the $\lambda$-lengths on the decorated super Teichm\"{u}ller space, introduced in \cite{pz_19}, were shown to be weighted sums 
over double dimer covers of the associated snake graph. Also, in \cite{moz3}, a flat $\mathrm{Osp}(1|2)$ graph connection was constructed
which represents the associated point in the decorated super Teichm\"{u}ller space, and the entries of the holonomy matrices of this connection
were shown to also be weighted sums of double dimer covers on snake graphs.

It is therefore natural to ask if there is an $m$-version of Theorem~\ref{thm1}, that is, can we use the continued fraction description 
of the snake graph $\calG=\calG[a_1,a_2,\ldots,a_n]$ to count the number of $m$-dimer covers of $\calG$?
An \emph{$m$-dimer cover} of a graph is a multiset of edges so that each vertex is incident to $m$ edges.
This question is one of the open problems in algebraic combinatorics posed at the OPAC conference \cite{opac} by the third author,
and that conference was where the work on this project began.

In this paper, we solve this problem completely. 
Indeed, it is well-known that the continued fraction $[a_1,a_2,\ldots,a_n]=\frac{p_n}{q_n}$ can be computed using a product of 2-by-2 matrices 
\begin{equation} \label{eq:contfrac} \begin{pmatrix} a_1 & 1 \\ 1 & 0 \end{pmatrix}
 \begin{pmatrix} a_2 & 1 \\ 1 & 0 \end{pmatrix}
 \ldots
  \begin{pmatrix} a_n & 1 \\ 1 & 0 \end{pmatrix} = \begin{pmatrix} p_n& p_{n-1} \\ q_n & q_{n-1} \end{pmatrix}.
\end{equation}
The integers $a_i$ can be interpreted as the number of 1-dimer covers of the snake graph $\calG[a_i]$. This snake graph consists of $a_i-1$ tiles that are glued together in a zigzag pattern. Let us denote the number of $m$-dimer covers of this snake graph by $\llb a_i\rrb_m$. 
It is straightforward to verify that the number of $1$-dimer covers of the zig-zag snake graph $\calG[a_i]$ is precisely $a_i$. Thus the above matrix product becomes 
\begin{equation}\begin{pmatrix} \llb a_1\rrb _1 & 1 \\ 1 & 0 \end{pmatrix}
 \begin{pmatrix} \llb a_2\rrb _1 & 1 \\ 1 & 0 \end{pmatrix}
 \ldots
  \begin{pmatrix} \llb a_n\rrb _1 & 1 \\ 1 & 0 \end{pmatrix} = \begin{pmatrix} p_n& p_{n-1} \\ q_n & q_{n-1} \end{pmatrix}.
\end{equation}
We generalize this matrix product by replacing each 2-by-2 matrix $\left(\begin{smallmatrix} a & 1 \\ 1 & 0 \end{smallmatrix}\right)
$ by the matrix $\Lambda^{(m)}(a)$ of size $(m+1)$-by-$(m+1)$ in which each ascending skew diagonal is constant and the entries on the skew diagonals from top left to bottom right are \[\llb a\rrb _m,\llb a\rrb _{m-1},\ldots ,\llb a\rrb _2,\llb a\rrb _1,1,0,\ldots, 0.\] 
Equivalently, the $(i,j)$-th entry of $\Lambda^{(m)}(a)$ is $\llb a\rrb_{m+2-i-j}$ where we set $\llb a\rrb_0=1$ and $\llb a\rrb_k = 0$ when $k < 0$.
For example, for $m=3$, we have 
\[\Lambda^{(3)}(a)= \begin{pmatrix} 
\llb a \rrb_3 &\llb a \rrb_2&\llb a \rrb_1& 1 \\
\llb a \rrb_2 &\llb a \rrb_1&1&0\\
\llb a \rrb_1&1&0&0\\
1&0&0&0
 \end{pmatrix}.
 \]
 Our first main result is the following.
 
\begin{theorem}\label{thm intro1}
The number of $m$-dimer covers of the snake graph $\calG[a_1,a_2,\ldots,a_n]$ is equal to the top left entry of the matrix product $\Lambda^{(m)}(a_1)\Lambda^{(m)}(a_2)\cdots\Lambda^{(m)}(a_n)$.
\end{theorem}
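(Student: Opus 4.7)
My approach is a transfer-matrix argument in analogy with the classical $m=1$ case recalled above. The snake graph $\mathcal{G} = \mathcal{G}[a_1,\ldots,a_n]$ decomposes into the zig-zag pieces $\mathcal{G}[a_1],\ldots,\mathcal{G}[a_n]$ glued along $n-1$ interfaces. For each $m$-dimer cover $D$ and each interface $i = 1, \ldots, n-1$, I would define a boundary state $k_i \in \{0, 1, \ldots, m\}$ encoding the relevant local behavior of $D$ at that interface, chosen so that the state space has cardinality exactly $m+1$, matching the size of $\Lambda(a)$. A natural first guess is the multiplicity with which $D$ uses a distinguished edge at the interface, though the correct notion may be somewhat more subtle.

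Assuming such a boundary state is in place, locality of the gluing yields a factorization
\[
N^{(m)}(\mathcal{G}) \;=\; \sum_{k_1,\ldots,k_{n-1}} \prod_{i=1}^{n} T_{k_{i-1}, k_i}(a_i),
\]
where $T_{p,q}(a)$ is the refined count of $m$-dimer covers of the single piece $\mathcal{G}[a]$ compatible with boundary states $(p, q)$, and $k_0 = k_n = 0$ correspond to the free ends. The theorem then reduces to the matrix identity
\[
T_{p,q}(a) \;=\; \llb a \rrb_{m - p - q}
\]
(zero when the subscript is negative), so that $T_{p,q}(a) = (\Lambda(a))_{p+1,\, q+1}$ and the top-left entry of the transfer-matrix product recovers $N^{(m)}(\mathcal{G})$.

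The main obstacle is establishing this matrix identity. I would attempt a bijective proof: $m$-dimer covers of $\mathcal{G}[a]$ with prescribed boundary states $(p, q)$ should correspond to unconstrained $(m-p-q)$-dimer covers of the same piece, obtained by absorbing $p + q$ units of multiplicity into the two boundaries and propagating uniquely through the zig-zag strip. A complementary route is an algebraic verification via the closed form $\llb a \rrb_m = \binom{a + m - 1}{m}$, which one obtains by viewing $\mathcal{G}[a]$ as a $1 \times (a-1)$ ladder. Either path yields the key identity, after which the theorem follows immediately from the factorization above by induction on $n$.
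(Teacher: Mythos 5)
Your transfer-matrix outline has the right formal shape --- the top-left entry of $\Lambda(a_1)\cdots\Lambda(a_n)$ is indeed a sum over interface states of products of local factors --- but the reduction contains two genuine gaps. First, the decomposition itself does not work as stated: $\calG[a_1,\dots,a_n]$ has $\sum_i a_i - 1$ tiles, while the pieces $\calG[a_1],\dots,\calG[a_n]$ account for only $\sum_i (a_i-1) = \sum_i a_i - n$ of them, so $n-1$ corner tiles (at the sign changes of the defining sign sequence) are left over, and these are precisely where consecutive pieces interact. Second, and more seriously, the key identity $T_{p,q}(a) = \llb a \rrb_{m-p-q}$ is asserted rather than proved, and for your ``natural first guess'' of boundary state (the multiplicity of a distinguished edge at the interface) it is false: for a single tile ($a=2$, $m=1$), refining the two perfect matchings by the multiplicities $(p,q)$ of the west and east edges gives $T_{0,0}=T_{1,1}=1$ and $T_{0,1}=T_{1,0}=0$, whereas your formula predicts $2,0,1,1$. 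The proposed ``absorb $p+q$ units of multiplicity and propagate uniquely'' bijection does not survive this example, and the algebraic route you mention only verifies the unconstrained count $\llb a \rrb_m = \mbinom{a}{m}$, not the refined boundary counts that the transfer matrix actually requires. No simple local state on the pieces $\calG[a_i]$ is known that makes this direct approach work; this difficulty is the heart of the problem.

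The paper circumvents both issues with a two-step argument. In \Cref{thm:matrix} it proves the transfer-matrix identity not for the snake graph $\calG[a_1,\dots,a_n]$ itself but for a \emph{weighted} count of $m$-dimer covers of the auxiliary straight snake graph $\calG[1^n]$, where a vertical rung $e_i$ covered $k$ times contributes weight $\mbinom{a_i}{k}$; there the induction is clean because the state is the multiplicity of a rung and the vertex conditions force the horizontal multiplicities (note the interior multiplicity comes out as $p+q-m$, not $m-p-q$). Then, separately, \Cref{thm 1} shows this weighted count equals $\#\Omega_m[a_1,\dots,a_n]$ by passing to $m$-lattice paths on the dual snake graph and exhibiting a projection onto the zigzag $\mathcal{G}^*[1^n]$ whose fiber over a path with multiplicities $k_i$ has size $\prod_i \mbinom{a_i}{k_i}$, each factor coming from \Cref{lem:straight_snake} applied to a straight segment of $a_i-1$ tiles. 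To repair your plan you would need to supply analogues of both halves; as written, the crucial local identity is missing and its most natural candidate fails.
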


We also give dimer theoretic interpretations of all other entries of this matrix product in Theorem~\ref{thm:matrix} {as well as an expression of this top left entry using a straight snake graph with weighted edges in Theorem~\ref{thm 1}}.  {In \Cref{sect 5}, we investigate the poset structure of $m$-dimer covers and related generating functions.}

Pushing the analogy with the $m=1$ case even further, we use these products of $\Lambda$-matrices to define a seemingly new
higher-dimensional generalization of continued fractions. We define $\mathrm{CF}_m(a_1,\dots,a_n) := (r_{m,m}, r_{m-1,m}, \dots, r_{1,m}, r_{0,m})^\top$
to be the first column of the matrix $\Lambda^{(m)}(a_1)\Lambda^{(m)}(a_2) \cdots \Lambda^{(m)}(a_n)$, normalized so that $r_{0,m} = 1$. Equivalently, we consider
$\mathrm{CF}_m$ as the {homogeneous coordinates of a point in $\Bbb{QP}^m$}. This is discussed in \Cref{sec:fixing_m}.

Since every rational number $x \geq 1$ is represented by a finite continued fraction given by a sequence of positive integers $x = [a_1,a_2,\dots,a_n]$,
we get a family of maps $r_{i,m} \colon \Bbb{Q}_{\geq 1} \to \Bbb{Q}_{\geq 1}$ for any $1 \leq i \leq m$. Our first main result about these generalized
continued fractions is that they extend to irrationals as well, so that $r_{i,m}(x)$ is well-defined for all real $x \geq 1$.

\begin {theorem}\label{thm:intro2}
    For any sequence of positive integers $a_1,a_2,a_3,\dots$, and any $1 \leq i \leq m$, the sequence $r_{i,m}(a_1,\dots,a_n)$ converges as $n \to \infty$.
\end {theorem}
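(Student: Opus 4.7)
The plan is to realize the homogeneous coordinates $(r_{m,m},\ldots,r_{1,m},r_{0,m})$ as the projective class $[M_n e_1]\in\mathbb P^m$, where $M_n:=\Lambda(a_1)\cdots\Lambda(a_n)$, and to prove that $[M_n e_1]$ converges as $n\to\infty$ in the Hilbert projective metric on the interior of $C=\mathbb R^{m+1}_{\ge 0}$; convergence of each affine coordinate $r_{i,m}$ then follows.

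First I would establish a positivity structure. For any $a,b\ge 1$, the product $\Lambda(a)\Lambda(b)$ has strictly positive entries, since its $(i,j)$-entry $\sum_k\llb a\rrb_{m+2-i-k}\llb b\rrb_{m+2-k-j}$ contains the strictly positive $k=1$ term $\llb a\rrb_{m+1-i}\llb b\rrb_{m+1-j}$ (using that $\llb\cdot\rrb_k\ge 1$ for $0\le k\le m$; the $m$-fold repetition of any $1$-dimer cover is an $m$-dimer cover). Hence for $n\ge 2$, $M_n$ has strictly positive entries, and the cones $M_nC\subset\mathrm{int}(C)\cup\{0\}$ form a nested decreasing family. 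In affine coordinates, the column positions $\rho_i^{(n,l)}:=(M_n)_{m+1-i,l}/(M_n)_{m+1,l}$ are the vertices of a compact convex simplex $S_n\subset\mathbb R^m$ with $r_{i,m}^{(n)}=\rho_i^{(n,1)}$; the inclusion $S_{n+1}\subseteq S_n$ follows from the fact that each column of $M_{n+1}$ is, in affine coordinates, a convex combination of the columns of $M_n$ (with weights derived from the columns of $\Lambda(a_{n+1})$ reweighted by the bottom row of $M_n$).

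Next I would derive a telescoping identity. A direct computation using $(M_{n+1})_{k,1}=\sum_l(M_n)_{k,l}\llb a_{n+1}\rrb_{m+1-l}$ gives
\[
r_{i,m}(a_1,\ldots,a_{n+1})-r_{i,m}(a_1,\ldots,a_n)=\sum_{l=2}^{m+1}\alpha_l^{(n+1)}\bigl(\rho_i^{(n,l)}-\rho_i^{(n,1)}\bigr),
\]
where $\alpha_l^{(n+1)}=\llb a_{n+1}\rrb_{m+1-l}(M_n)_{m+1,l}/q_{n+1}$ are positive weights summing to $1$ (with $q_n:=(M_n)_{m+1,1}$). Since $r_{i,m}^{(n+k)}\in S_{n+k}\subseteq S_n$ and $r_{i,m}^{(n)}\in S_n$ for all $k\ge 0$, we get $|r_{i,m}^{(n+k)}-r_{i,m}^{(n)}|\le\mathrm{diam}(S_n)$, reducing convergence of $(r_{i,m}^{(n)})$ to the statement $\mathrm{diam}(S_n)\to 0$.

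The main obstacle is establishing $\mathrm{diam}(S_n)\to 0$ when the $a_i$ are unbounded: Birkhoff's contraction factor for the strictly positive matrix $\Lambda(a_{n-1})\Lambda(a_n)$ depends on $a_{n-1},a_n$ and approaches $1$ as these grow, so uniform contraction is unavailable. I plan to overcome this by exploiting two extra structural features of the Hankel matrices: (i) $\det\Lambda(a)=\pm 1$ (by induction, expanding along the bottom row of the Hankel matrix), so $|\det M_n|=1$, which constrains the $2\times 2$ minors of $M_n$ via Pl\"ucker-type relations; and (ii) $q_n$ satisfies the Fibonacci-style inequality $q_n\ge q_{n-1}+q_{n-2}$ (from $\llb a_n\rrb_m\ge 1$ together with the identity $c_n^{(m+1)}=c_{n-1}^{(1)}$ among the columns of $M_n$), so $q_n$ grows at least exponentially. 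Since the edge lengths of $S_n$ are precisely $2\times 2$ minors of $M_n$ divided by products of the entries $(M_n)_{m+1,l}$, combining (i) and (ii) should yield $\mathrm{diam}(S_n)\to 0$, completing the Cauchy argument and establishing convergence.
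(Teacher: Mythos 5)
Your setup coincides with the paper's own proof of this statement (given as \Cref{thm:m-map}): both pass to the nested cones $M_nC$, slice by the affine chart $x_{m+1}=1$ to obtain nested simplices whose first vertex is $\CF_m(a_1,\dots,a_n)$, and reduce everything to showing that the diameters of these simplices tend to zero. The divergence---and the gap---is in how that last step is closed. The paper does it with \Cref{lemma:new} (the entries of $M_n$ decrease along rows and down columns), which forces each vertex $T_{n+1}(m+1-k)$ to be a convex combination of $T_n(1),\dots,T_n(k+1)$ with weights decreasing in that order; consequently $T_{n+1}$ lies inside a single simplex of the barycentric subdivision of $T_n$, and the diameter shrinks by the universal factor $\tfrac{m}{m+1}$ at every step, \emph{uniformly in the $a_i$}. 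This is exactly the uniform contraction you observe Birkhoff's theorem fails to provide.

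Your proposed substitute---unimodularity of the $\Lambda(a)$ together with $q_n\ge q_{n-1}+q_{n-2}$---is left at the level of ``should yield'' and, as stated, does not suffice for $m\ge 2$. For $m=1$ the unique $2\times 2$ minor equals $\det M_n=\pm 1$, so the edge length is $1/(q_nq_{n-1})$ and unimodularity closes the argument; but for $m\ge 2$ the relevant $2\times 2$ minors are entries of $\wedge^2M_n=\wedge^2\Lambda(a_1)\cdots\wedge^2\Lambda(a_n)$, which are unbounded and generically grow like the product of the two largest singular values of $M_n$, i.e.\ at a rate comparable to the denominators $(M_n)_{m+1,l}(M_n)_{m+1,l'}$. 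The condition $|\det M_n|=1$ constrains only the top exterior power: it gives $|\lambda_1\lambda_2\cdots\lambda_{m+1}|=1$, which yields a \emph{lower} bound on $|\lambda_2|$ in terms of $\lambda_1$ but no upper bound on $|\lambda_2|/\lambda_1$. Showing that the minors grow strictly slower than the denominators is essentially equivalent to the projective convergence you are trying to prove, so the argument is circular at precisely the point where the paper's monotonicity lemma and barycentric-subdivision containment do the work. To repair your route you would need a quantitative spectral-gap or contraction estimate uniform in the $a_i$; the combinatorial \Cref{lemma:new} is the paper's way of manufacturing exactly that uniformity.
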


We also re-interpret Theorem \ref{thm intro1} in terms of these generalized continued fractions to obtain a generalization of Theorem \ref{thm1} (originally from \cite{cs_18}): 
Let $\Omega_m(\mathcal{G})$ denote the set of $m$-dimer covers of the graph $\mathcal{G}$.

\begin {theorem}\label{thm:intro3}
    Let $x \in \Bbb{Q}_{\geq 1}$ be a rational number with finite continued fraction $x = [a_1,a_2,\dots,a_n]$. Then
    \[ r_{m,m}(x) = \frac{\# \Omega_m(\mathcal{G}[a_1,\dots,a_n])}{\# \Omega_m(\mathcal{G}[a_2,\dots,a_n])}. \]
\end {theorem}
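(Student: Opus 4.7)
The plan is to deduce Theorem~\ref{thm:intro3} directly from Theorem~\ref{thm intro1} by identifying the unnormalized bottom entry of the first column of $\Lambda(a_1)\cdots\Lambda(a_n)$ as the $m$-dimer count of the truncated snake graph $\mathcal{G}[a_2,\dots,a_n]$. The normalization in the definition of $\CF_m$ then turns the top entry into the desired ratio.

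First I would write out the bottom row of a single factor $\Lambda(a)$. Since the $(i,j)$-entry is $\llb a\rrb_{m+2-i-j}$, the bottom row is indexed by $m+1-j$ for $j=1,\dots,m+1$, which takes values $\llb a\rrb_{0}=1$ at $j=1$ and $\llb a\rrb_{k}=0$ with $k<0$ for $j\geq 2$. Hence the bottom row of $\Lambda(a_1)$ is $(1,0,\dots,0)$.

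Next I would use this to peel off the first factor. Writing $M:=\Lambda(a_1)\Lambda(a_2)\cdots\Lambda(a_n)$ and $M':=\Lambda(a_2)\cdots\Lambda(a_n)$, the bottom-left entry of $M$ is
\[
M_{m+1,1}=\sum_{k=1}^{m+1}(\Lambda(a_1))_{m+1,k}\,M'_{k,1}=M'_{1,1}.
\]
By Theorem~\ref{thm intro1} applied to the sequence $(a_2,\dots,a_n)$, this equals $\#\Omega_m(\mathcal{G}[a_2,\dots,a_n])$. Meanwhile, again by Theorem~\ref{thm intro1}, the top-left entry $M_{1,1}$ equals $\#\Omega_m(\mathcal{G}[a_1,\dots,a_n])$.

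Finally, by the definition of $\CF_m$, the vector $(r_{m,m},r_{m-1,m},\dots,r_{1,m},r_{0,m})^\top$ is obtained from the first column of $M$ by rescaling so that the last entry is $1$, i.e.\ by dividing by $M_{m+1,1}$. Therefore
\[
r_{m,m}=\frac{M_{1,1}}{M_{m+1,1}}=\frac{\#\Omega_m(\mathcal{G}[a_1,\dots,a_n])}{\#\Omega_m(\mathcal{G}[a_2,\dots,a_n])},
\]
which is exactly the claimed identity. There is no substantial obstacle here; the only thing to be careful about is the indexing convention for $\Lambda(a)$ and the fact that the normalization in the definition of $\CF_m$ is by the bottom entry of the first column, not by anything else.
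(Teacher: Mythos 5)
Your proof is correct and follows essentially the same route as the paper's own argument (the second half of \Cref{thm:r_m}): the bottom row of $\Lambda(a_1)$ is $(1,0,\dots,0)$, so the normalizing entry $M_{m+1,1}$ equals the top-left entry of $\Lambda(a_2)\cdots\Lambda(a_n)$, and two applications of \Cref{thm intro1} give the ratio. The only caveat is that the paper's official definition of $r_{m,m}$ (\Cref{def:r_m}) is recursive, and identifying it with the normalized first column of the matrix product is itself the content of the inductive first half of \Cref{thm:r_m}; you bypass that step by taking the introduction's matrix description as the definition, which is legitimate for the statement as posed.
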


As an application, we suggest a possible new approach to Hermite's problem. It is well-known that the continued fraction of a number is
eventually periodic if and only if the number is a quadratic irrational. Hermite asked if it is possible to represent numbers by a sequence of integers
so that the sequence is eventually periodic if and only if the number is a cubic irrational.
In our setting, the cubic case corresponds to double dimer covers, i.e. $m=2$.
In Theorem \ref{thm:eventually_periodic}, we show that for an eventually periodic sequence,
the values of $r_{i,m}$ lie in a number field which is a degree $(m+1)$ extension of $\Bbb{Q}$.

Based on numerical experiments, we conjecture that the maps $r_{i,m}$ are monotone increasing, continuous, and map $\Bbb{R}_{\geq 1}$ bijectively onto itself.
See \Cref{sec:future_directions}, and in particular \Cref{fig:r2_graph}. If one could find an algorithm for inverting the map $r_{2,2}$ (which we are presently not able to do),
this would provide a way to see whether cubic irrationals come from eventually periodic sequences.

\section{Recollections} In this section we recall the concepts and results we need.

\subsection{Snake graphs} In this subsection, we recall the definition of a snake graph following the exposition in \cite{cs_18}. 
An illustration is given in Figure~\ref{fig snakegraph}.

 A \emph{tile} is a planar graph with four vertices and four edges that has the shape of a square. A {\em snake graph} $\mathcal{G}$ is a connected planar graph consisting of a finite sequence of tiles $G_1,G_2,\ldots, G_d$ with $d \geq 1,$ such that
$G_i$ and $G_{i+1}$ share exactly one edge $e_i$ and this edge is either the north edge of $G_i$ and the south edge of $G_{i+1}$ or the east edge of $G_i$ and the west edge of $G_{i+1}$,  for each $i=1,\dots,d-1$.
A snake graph $\mathcal{G}$ is called {\em straight} if all its tiles lie in one column or one row, and a snake graph is called {\em zigzag} if no three consecutive tiles are straight.
 We say that two snake graphs are \emph{isomorphic} if they are isomorphic as graphs.

\begin{figure}
\begin{center}
\begingroup%
  \makeatletter%
  \providecommand\color[2][]{%
    \errmessage{(Inkscape) Color is used for the text in Inkscape, but the package 'color.sty' is not loaded}%
    \renewcommand\color[2][]{}%
  }%
  \providecommand\transparent[1]{%
    \errmessage{(Inkscape) Transparency is used (non-zero) for the text in Inkscape, but the package 'transparent.sty' is not loaded}%
    \renewcommand\transparent[1]{}%
  }%
  \providecommand\rotatebox[2]{#2}%
  \newcommand*\fsize{\dimexpr\f@size pt\relax}%
  \newcommand*\lineheight[1]{\fontsize{\fsize}{#1\fsize}\selectfont}%
  \ifx\svgwidth\undefined%
    \setlength{\unitlength}{245.00282764bp}%
    \ifx\svgscale\undefined%
      \relax%
    \else%
      \setlength{\unitlength}{\unitlength * \real{\svgscale}}%
    \fi%
  \else%
    \setlength{\unitlength}{\svgwidth}%
  \fi%
  \global\let\svgwidth\undefined%
  \global\let\svgscale\undefined%
  \makeatother%
  \begin{picture}(1,0.32096568)%
    \lineheight{1}%
    \setlength\tabcolsep{0pt}%
    \put(0,0){\includegraphics[width=\unitlength,page=1]{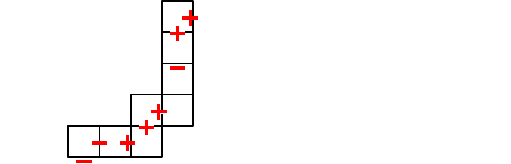}}%
    \put(0.00153059,0.04988186){\color[rgb]{0,0,0}\makebox(0,0)[lt]{\lineheight{0}\smash{\begin{tabular}[t]{l} \end{tabular}}}}%
    \put(0,0){\includegraphics[width=\unitlength,page=2]{figures/snakegraph.pdf}}%
  \end{picture}%
\endgroup%

 \caption{The left hand picture shows the snake graph of the continued fraction $[2,4,1,2]$. The sign sequence is given in red.  The sign changes occur in the tiles $G_{\ell_i}$, with $\ell_i=2,6,7$.
 The picture on the right  shows the dual snake graph $(\calG[2,4,1,2])^*=\calG[1,2,1,1,3,1]$.} \label{fig snakegraph}
\end{center}
\end{figure}

A {\em sign function} $f$ on a snake graph $\mathcal{G}$ is a map $f$ from the set of edges of $\mathcal{G}$ to the set $\{ +,- \}$ 
such that on every tile in $\mathcal{G}$ the north and the west edge have the same sign, the south and the east edge have the same sign 
and the sign on the north edge is opposite to the sign on the south edge. 
Equivalently, the signs are constant along lines of slope 1, and the signs on two adjacents diagonals must be different.
The 
snake graph $\mathcal{G}$ is determined by a sequence of tiles $G_1,\ldots,G_d$ and a sign function $f$ on the interior edges $e_1,\ldots,e_{d-1}$ of $\mathcal{G}$.   Denote by $e_0$ the south edge of the first tile and choose an edge $e_d$ to be either the north edge or the east edge of the last tile $G_d$. Then we obtain a sign sequence 
\begin{equation}
 \label{seq}
 (f(e_0) , f(e_1) ,\ldots ,f(e_{d-1}), f(e_d)).
\end{equation}

\medskip

Now let \[ [a_1,a_2,\ldots,a_n]= a_1+\cfrac{1}{a_2+\cfrac{1}{\ddots +\cfrac{1}{a_n}}}\]
 be a continued fraction with all $a_i$ positive integers, and let $d= a_1+a_2+\cdots +a_n -1$.
Consider the following sign sequence
\begin{equation}
 \label{eqsign} 
\begin{array}{cccccccc}
  ( \underbrace{ - ,\ldots,- },&  \underbrace{  +,\ldots,+ },&  \underbrace{ - ,\ldots,- },& \ldots,&  \underbrace{\pm ,\ldots,\pm }) .  \\
 a_1 & a_2 & a_3&\ldots&a_n
\end{array} 
\end{equation}

 Thus each integer $a_i$ corresponds to a maximal subsequence of constant sign in the sequence (\ref{eqsign}). 
We let $\ell_i$ denote  the position of the last term in the $i$-th subsequence, thus  
$\ell_i = \sum_{j=1}^{i} a_{j}.$

\begin{definition}\label{def:snake_for_continued_fraction}
	 The snake graph
 $\mathcal{G}[a_1,a_2,\ldots,a_n] $ of the continued fraction $[a_1,a_2,\ldots,a_n]$ is the snake graph with $d$ tiles determined by the sign sequence (\ref{eqsign}). 
\end{definition}

The following theorem is shown in \cite{cs_18}. It gives a combinatorial realization of continued fractions as quotients of cardinalities of sets.
\begin{theorem}\cite[Theorem 3.4]{cs_18}\label{thm1} 
 If  $\Omega_1(\calG)$ denotes the set of perfect matchings of $\calG$ then
 \[ [a_1,a_2,\ldots,a_n] =\frac{\#\Omega_1(\calG [a_1,a_2,\ldots,a_n])}{\#\Omega_1(\calG[a_2,\ldots,a_n])},\]
 and the right hand side is a reduced fraction. 
\end{theorem}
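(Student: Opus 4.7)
The plan is to prove the identity and the reducedness claim separately, both by induction on $n$, with the key engine being a combinatorial recursion that mirrors the standard continued-fraction recursion $[a_1,\ldots,a_n]=a_1+\tfrac{1}{[a_2,\ldots,a_n]}$.

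First I would establish the combinatorial recursion
\[
\#\Omega_1(\mathcal{G}[a_1,a_2,\ldots,a_n]) \;=\; a_1\cdot\#\Omega_1(\mathcal{G}[a_2,\ldots,a_n]) \;+\; \#\Omega_1(\mathcal{G}[a_3,\ldots,a_n]).
\]
This matches exactly the action of an extra factor $\left(\begin{smallmatrix}a_1&1\\1&0\end{smallmatrix}\right)$ on the top-left entry of the matrix product \eqref{eq:contfrac}. To prove it combinatorially, I would exploit the fact that the first $a_1-1$ tiles of $\mathcal{G}[a_1,\ldots,a_n]$ form a straight zigzag-free block (aligned in one row or column, depending on the leading sign), while the $(a_1)$-th internal edge is a sign-change edge where the graph turns into the block encoding $a_2$. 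Classifying each perfect matching $M$ according to how it meets this boundary region partitions $\Omega_1(\mathcal{G}[a_1,\ldots,a_n])$ into two classes: those that use a specific pivot edge at the transition (which restrict canonically to a matching of $\mathcal{G}[a_3,\ldots,a_n]$), and those that do not (which correspond to a matching of $\mathcal{G}[a_2,\ldots,a_n]$ together with one of the $a_1$ possible ways to fill in the straight first block). A careful bookkeeping on the boundary edges of the straight first block, essentially Kasteleyn-type transfer matrices restricted to a strip, yields the factor $a_1$.

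With the recursion in hand, the main identity follows by a straightforward induction. Write $N_n := \#\Omega_1(\mathcal{G}[a_1,\ldots,a_n])$ and $N_n' := \#\Omega_1(\mathcal{G}[a_2,\ldots,a_n])$. By the inductive hypothesis applied to the shorter sequence, $[a_2,\ldots,a_n] = N_n'/\#\Omega_1(\mathcal{G}[a_3,\ldots,a_n])$. Plugging this into $[a_1,\ldots,a_n]=a_1+1/[a_2,\ldots,a_n]$ and simplifying using the recursion above produces exactly $N_n/N_n'$. The base cases $n=1$ and (optionally) $n=2$ are verified directly: $\mathcal{G}[a_1]$ is a straight strip of $a_1-1$ tiles whose $1$-dimer cover count is $a_1$, and the empty graph $\mathcal{G}[\,]$ contributes the denominator $1$.

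For the reducedness statement, I would invoke the matrix description in \eqref{eq:contfrac}: each factor $\left(\begin{smallmatrix}a_i&1\\1&0\end{smallmatrix}\right)$ has determinant $-1$, so the product has determinant $(-1)^n$, yielding $p_n q_{n-1}-p_{n-1} q_n=\pm 1$ and hence $\gcd(p_n,q_n)=1$. Since the induction identifies $p_n = N_n$ and $q_n = N_n'$, the fraction $N_n/N_n'$ is already in lowest terms.

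The main obstacle is the combinatorial recursion in the first step. The snake graph changes direction precisely at the boundary between blocks, so the ``straight strip of length $a_1$'' picture is cleanest in the interior of a block and requires care at both ends, especially when $a_1=1$ (the first block is degenerate) or when $n\le 2$ (the tail $\mathcal{G}[a_3,\ldots,a_n]$ is empty and must be assigned the convention $\#\Omega_1=1$). A clean way to handle these edge cases uniformly is to work with the sign-sequence description \eqref{eqsign}, locate the first sign change, and perform the cut there; this lets the same pivot-edge bijection go through in every case.
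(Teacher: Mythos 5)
A preliminary remark: the paper does not prove this theorem itself --- it is imported from \cite[Theorem~3.4]{cs_18} --- so there is no internal proof to compare against. Your overall strategy (establish the front recursion $\#\Omega_1(\calG[a_1,\ldots,a_n]) = a_1\,\#\Omega_1(\calG[a_2,\ldots,a_n]) + \#\Omega_1(\calG[a_3,\ldots,a_n])$, induct against $[a_1,\ldots,a_n]=a_1+1/[a_2,\ldots,a_n]$, and get coprimality from the determinant $(-1)^n$ of the $2\times 2$ matrix product) is the standard route to this result, and the induction and reducedness parts are fine once the recursion is in hand (note you do need $N_n=p_n$ on the nose rather than merely $N_n/N_n'=p_n/q_n$; this follows because both sequences satisfy the same recursion with the same initial values $a_1$ and $1$).

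There is, however, a genuine error in the one step that carries all the weight. You describe the first $a_1-1$ tiles as a ``straight zigzag-free block aligned in one row or column'' and the sign-change edge as the place ``where the graph turns.'' The dictionary is exactly reversed: since north/west share a sign and south/east share the opposite sign on each tile, two consecutive interior edges carry the \emph{same} sign precisely when the middle tile is a turn, and \emph{opposite} signs precisely when three consecutive tiles are collinear. Hence the maximal constant-sign run of length $a_1$ produces a zigzag piece of $a_1-1$ tiles (as the introduction states, $\calG[a_i]$ ``consists of $a_i-1$ tiles that are glued together in a zigzag pattern''), and the sign changes are where the snake goes straight. This is not merely cosmetic: a zigzag of $a_1-1$ tiles has exactly $a_1$ perfect matchings, which is the source of the factor $a_1$ you need, whereas a straight strip of $a_1-1$ tiles has Fibonacci-many. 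So the ``careful bookkeeping on the boundary edges of the straight first block'' as you describe it would not produce the factor $a_1$. Moreover, the recursion itself --- the choice of pivot edge at the transition tile $G_{\ell_1}$ and the two bijections (matchings containing the pivot with matchings of $\calG[a_3,\ldots,a_n]$; matchings avoiding it with $a_1$ local fillings of the zigzag times matchings of $\calG[a_2,\ldots,a_n]$) --- is asserted via an appeal to ``Kasteleyn-type transfer matrices'' rather than proved, and verifying that the forced propagation of edges along the first zigzag really decouples into an independent factor of $a_1$, including the degenerate cases $a_1=1$ and $n\le 2$, is exactly where the substance of the theorem lies.
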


\begin{example}
 The snake graph in the top row of Figure \ref{fig mlatticepath} has continued fraction $[1,1,1,1] = \frac{5}{3}$. The numerator $5$ is equal to the number of perfect matchings as shown in the figure. The denominator 3 corresponds to the number of perfect matchings of the subgraph given by the top two tiles. These are given by the restriction of the matchings $P_1,P_2,P_3$ to that subgraph. 
\end{example}

\subsection{$m$-dimer covers and $m$-lattice paths} \label{sec:mdimer}
Let $G$ be a graph. A \emph{dimer cover} (or 1-dimer cover) of $G$ is the same as a perfect matching of $G$, thus it is a subset $P$ of the set of edges of $G$ such that, for every vertex $x$ of $G$, there exists exactly one edge in $P$ that is incident to $x$. 

If $m$ is a positive integer, an $m$-dimer cover of $G$ is a multiset $P$ whose elements are edges of $G$ such that, for every vertex $x$ of $G$, there exist exactly $m$ edges in $P$ that are incident to $x$. 

\begin{remark}
    An $m$-dimer cover is not the same as an $m$-tuple of $1$-dimer covers. 
    For example a single tile has two 1-dimer covers and three 2-dimer covers (and not four). 
    It is also not the same as an $m$-multiset of 1-dimer covers. For example, the snake graph $\calG[1,1,1,1]$ has five 1-dimer covers shown in the top row of Figure~\ref{fig mlatticepath}, and hence 15 2-multisets of 1-dimer covers. However, it only has 14 2-dimer covers, because the multisets $\{P_2,P_5\}$ and $\{P_3,P_4\}$ give rise to the same 2-dimer cover.
\end{remark}

A \emph{lattice path} in a snake graph $\calG$ is a sequence of vertices  $(a_0,b_0),(a_1,b_1),\ldots, (a_l,b_l)$ of $\calG$ 
such that each step $(a_{i+1},b_{i+1})-(a_i,b_i)$ is an edge of $\calG$, directed either north or east, 
and $(a_0,b_0)$ is the south-west vertex of the first tile and $(a_l,b_l)$ is the north-east vertex of the last tile.  

An \emph{$m$-multipath} on $\calG$ is a multiset with $m$ elements, each of which is a lattice path on $\calG$. 
The \emph{edge multiset} of an $m$-multipath is the multiset whose elements are the edges of the lattice paths in the multiset.  
Two $m$-multipaths are said to be equivalent if they have the same edge multiset.
An \emph{$m$-lattice path} in a snake graph $\calG$ is the equivalence class of an $m$-multipath.

To illustrate these concepts we show the five lattice paths $w_1,\ldots,w_5$ of the snake graph $\calG[4]$ on the left hand side in Figure~\ref{fig mlatticepath}. These five paths give rise to  the 15 $2$-multipaths $\{w_i,w_j\}$ with $1\le i\le j\le5$. Among these, the two multipaths $\{w_2,w_5\}$ and $\{w_3,w_4\}$ have the same edge multiset, which is shown in the right picture of the figure. Thus by definition, $\{w_2,w_5\}$ and $\{w_3,w_4\}$  define the same 2-lattice path. The total number of 2-lattice paths is  14. 
\begin{figure}
\centering
\newcommand{\svgwidth}{\textwidth}
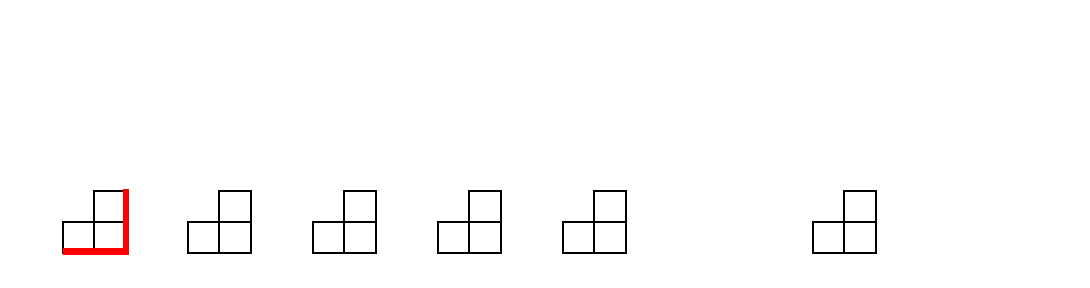
\caption{The pictures in the top row show the five 1-dimer covers $P_1,\ldots,P_5$ of the snake graph $\calG[1,1,1,1]$. 
The pictures in the bottom row show the five lattice paths $w_1,\ldots,w_5$ of $\calG[4]$, which is the dual of $\calG[1,1,1,1]$. 
The top picture on the right hand side shows that the multisets $\{P_2,P_5\}$ and $\{P_3,P_4\}$ produce the same 2-dimer cover, 
and the picture below shows that the multipaths $\{w_2,w_5\}$ and $\{w_3,w_4\}$ produce the same 2-lattice path.}
\label{fig mlatticepath}
\end{figure}

\subsection{The dual snake graph}
Following \cite{claussen20,propp05}, we associate a dual snake graph $\calG^*$ to every snake graph $\calG$ as follows. If $\calG=\calG[a_1,a_2,\ldots,a_n]$, then $\calG^*=\calG[b_1,b_2,\ldots,b_m]$, where the sequence $(b_i)$ is obtained from the sequence $(a_i)$ by the following two steps. 
\begin{enumerate}
\item Replace each $a_i$ by $1+1+\cdots+1$, the sum of $a_i$ ones.
\item Replace each $+$ sign by a comma and each comma by a $+$ sign.
\end{enumerate}

Continuing  the example in Figure \ref{fig snakegraph}, the dual of $\calG[2,4,1,2]$ is $\calG[1,2,1,1,3,1]$, because step (1) produces the sequence 
$1+1,1+1+1+1,1,1+1$
and then continuing with step (2) produces
$1,1+1,1,1,1+1+1,1=1,2,1,1,3,1$.

\begin{remark}\label{rem dual G}
(a) Three consecutive tiles in $\calG$ form a straight subsnake graph if and only if the corresponding tiles form a zigzag subsnake graph in $\calG^*$. Moreover, the fact that $a_1\ge 2$ if and only if $b_1=1$ implies that the first step in $\calG$ is to the right if and only if the first step in $\calG^*$ is up.

(b) In the dual snake graph $\mathcal{G}^*[b_1,b_2,\ldots,b_m]$ each $b_i$ corresponds to a straight segment and each comma corresponds to a corner. 
    The lengths of the segments are indicated by the $b_i$'s.

(c) A snake graph and its dual have the same number of tiles, because $a_1+\cdots+a_n=b_1+\cdots +b_m$. 

(d) The values of the continued fractions $[a_1,\ldots,a_n]$ and $[b_1,\ldots,b_m]$ are not the same. In our example, we have $[2,4,1,2]=\frac{31}{14}$ and $[1,2,1,1,3,1]=\frac{32}{23}$. Thus the duality induces a non-trivial involution $D$ on $\mathbb{Q}$, and, when passing to infinite continued fractions, a non-trivial involution on $\mathbb{R}$. 
This involution was studied in \cite{au_15}.
\end{remark}

The reason why the dual snake graph is important to us is the following result  \cite[Theorem 6.22]{claussen20}. It is illustrated in Figure~\ref{fig mlatticepath}. 
\begin{theorem}
 \label{thm clausen}
 There is an order preserving bijection between the lattice of perfect matchings of $\calG$ and the lattice of lattice paths on $\calG^*$.
\end{theorem}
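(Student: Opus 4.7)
The plan is to realize both lattices as distributive lattices of order ideals of the same underlying fence poset, and then to read off the bijection from this common description.

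First, I would recall the lattice structure on $\Omega_1(\calG)$, due to Propp and Musiker--Schiffler--Williams. There is a unique \emph{minimum matching} $M_-$ determined by the sign function, and every other matching is obtained from $M_-$ by a sequence of \emph{tile twists}, each of which replaces the two horizontal edges of a single tile in the matching by its two vertical edges (or vice versa). Declaring $M \lessdot M'$ whenever $M'$ is a single-tile twist of $M$ endows $\Omega_1(\calG)$ with the structure of a distributive lattice, which one identifies with $J(P_\calG)$, the lattice of order ideals of a fence poset $P_\calG$ on the $d$ tiles of $\calG$. Two neighboring tiles of $\calG$ are comparable in $P_\calG$ precisely when they lie in a straight subsnake graph, while tiles in a zigzag region form antichains.

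Second, I would set up an analogous lattice structure on lattice paths of $\calG^*$. The minimum element is the unique path hugging the south-east boundary, and covering relations are given by flipping a single ``east-then-north'' corner at an interior vertex of $\calG^*$ into ``north-then-east.'' This again yields a distributive lattice, presentable as $J(P_{\calG^*})$ for a fence poset $P_{\calG^*}$ on the tiles of $\calG^*$; here neighboring tiles are comparable when they form a zigzag, and they form antichains when they form a straight subsnake graph.

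The key observation is that $P_\calG \cong P_{\calG^*}$. By part (a) of the preceding remark, the dual construction swaps straight and zigzag triples of consecutive tiles while preserving the total number of tiles, which exactly exchanges the two alternative descriptions of covering relations in the two fence posets. Under the natural tile-by-tile bijection between $\calG$ and $\calG^*$, the covering relations thus match, and applying the functor $J(-)$ produces the desired order isomorphism from $\Omega_1(\calG)$ to the lattice of lattice paths on $\calG^*$.

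The main obstacle, I expect, is verifying the fence poset descriptions of the two lattices and checking that the natural tile-by-tile identification carries $P_\calG$ onto $P_{\calG^*}$. This is a local check: one analyzes each consecutive pair and triple of tiles, distinguishing straight versus zigzag configurations, and confirms that a twist in $\calG$ corresponds under the bijection to a corner-flip in $\calG^*$. An alternative to going through the two fence posets is to construct the bijection between matchings and paths directly (tile by tile using the sign sequence), and verify that twists and corner-flips are intertwined; this avoids the poset detour but requires the same local case analysis.
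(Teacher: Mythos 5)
Your overall architecture is sound and is genuinely different from what the paper does: the paper does not prove this statement at all, but cites it from Claussen's thesis and only sketches the explicit bijection, namely the iterated reflections $\tau_1,\tau_2,\dots$ about the tile diagonals, which simultaneously transform $\calG$ into $\calG^*$ and carry a dimer cover to a lattice path. Your route --- presenting both lattices as lattices of order ideals of fence posets on the tiles and matching the posets --- is a legitimate alternative, and it is consistent with the remark in Section 5 of the paper that the lattice $\Omega_1(\calG)$ is isomorphic to the order ideals of the tile poset of $\calG^*$.

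However, your descriptions of the two fence posets are wrong, and the ``key observation'' as stated rests on them. In both posets \emph{every} pair of adjacent tiles is comparable; there are no antichains of neighboring tiles. What distinguishes straight from zigzag triples is not comparability versus incomparability but the \emph{orientation} of the two consecutive covering relations: in the matching poset of $\calG$, a straight triple makes the middle tile a local maximum or minimum (relations $i<i+1>i+2$ or $i>i+1<i+2$), while a zigzag triple gives a local chain; in the tile poset governing lattice paths ($x<y$ iff $x$ lies below or to the right of $y$) the roles are reversed. A quick count falsifies your version: the straight snake graph with $3$ tiles, $\calG[1,1,1,1]$, has $5$ perfect matchings, whereas your $P_\calG$ for it would be a $3$-chain (all triples straight, hence all pairs comparable in one direction) with only $4$ order ideals; the correct poset is the fence $1<2>3$, which has $5$. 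The duality statement you need is therefore that $\calG\mapsto\calG^*$ exchanges ``local extremum'' with ``local chain'' at each interior tile, which does follow from part (a) of the remark on duality (straight triples correspond to zigzag triples), so the proof can be repaired --- but the local check you defer is exactly where the error sits, and as written the claimed isomorphism of posets is justified by an incorrect dichotomy.
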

{
Here we give a brief illustration of the bijection of Theorem \ref{thm clausen} (see \Cref{fig dimer-to-path}) and refer to \cite{claussen20} for details. 
Label each diagonal of the tiles of $\calG$ by $d_i$ from bottom-left to top-right. For an $m$-dimer cover $M$ (or $m$-lattice path) of a snake graph $\calG$, 
define the operations $\tau_i$ to be reflecting every edge above $d_i$ about the anti-diagonal. Note that this operation changes both the graph and the dimer cover on it. 
Then iteratively applying $\tau_{1},\tau_2,\dots$ at all squares will take an $m$-dimer cover of $\calG$ to an $m$-lattice path of $\calG^*$. 
}

\begin{figure}
\centering
\begin{tikzpicture}[scale = 0.55]
    \node [] (0) at (-9, -2) {};
    \node [] (1) at (-8, -2) {};
    \node [] (2) at (-9, -1) {};
    \node [] (3) at (-8, -1) {};
    \node [] (4) at (-9, 0) {};
    \node [] (5) at (-8, 0) {};
    \node [] (6) at (-9, 1) {};
    \node [] (7) at (-8, 1) {};
    \node [] (8) at (-5.5, -2) {};
    \node [] (9) at (-4.5, -2) {};
    \node [] (10) at (-5.5, -1) {};
    \node [] (11) at (-4.5, -1) {};
    \node [] (12) at (-3.5, -1) {};
    \node [] (13) at (-3.5, -2) {};
    \node [] (14) at (-2.5, -2) {};
    \node [] (15) at (-2.5, -1) {};
    \node [] (16) at (0, -2) {};
    \node [] (17) at (1, -2) {};
    \node [] (18) at (2, -2) {};
    \node [] (19) at (2, -1) {};
    \node [] (20) at (1, -1) {};
    \node [] (21) at (0, -1) {};
    \node [] (22) at (1, 0) {};
    \node [] (23) at (2, 0) {};
    \node [] (24) at (-7, 0) {};
    \node [] (25) at (-7, 1) {};
    \node [] (26) at (-3.5, 0) {};
    \node [] (27) at (-2.5, 0) {};
    \node [] (28) at (3, 0) {};
    \node [] (29) at (3, -1) {};
    \node [] (30) at (5.5, -2) {};
    \node [] (31) at (6.5, -2) {};
    \node [] (32) at (7.5, -2) {};
    \node [] (33) at (7.5, -1) {};
    \node [] (34) at (6.5, -1) {};
    \node [] (35) at (5.5, -1) {};
    \node [] (36) at (6.5, 0) {};
    \node [] (37) at (7.5, 0) {};
    \node [] (38) at (7.5, 1) {};
    \node [] (39) at (6.5, 1) {};
    \node [] (40) at (10, -2) {};
    \node [] (41) at (11, -2) {};
    \node [] (42) at (12, -2) {};
    \node [] (43) at (12, -1) {};
    \node [] (44) at (11, -1) {};
    \node [] (45) at (10, -1) {};
    \node [] (46) at (11, 0) {};
    \node [] (47) at (12, 0) {};
    \node [] (48) at (12, 1) {};
    \node [] (49) at (11, 1) {};
    \node [] (50) at (-7.25, -1) {};
    \node [] (51) at (-6.25, -1) {};
    \node [] (52) at (-1.75, -1) {};
    \node [] (53) at (-0.75, -1) {};
    \node [] (54) at (3.75, -1) {};
    \node [] (55) at (4.75, -1) {};
    \node [] (56) at (8.25, -1) {};
    \node [] (57) at (9.25, -1) {};

    \draw [ultra thick, color = red] (2.center) to (0.center);
    \draw [ultra thick, color = red] (3.center) to (1.center);
    \draw [ultra thick, color = red] (4.center) to (5.center);
    \draw [ultra thick, color = red] (6.center) to (7.center);
    \draw (6.center) to (4.center);
    \draw (4.center) to (2.center);
    \draw (2.center) to (3.center);
    \draw (3.center) to (5.center);
    \draw (5.center) to (7.center);
    \draw (0.center) to (1.center);
    \draw [densely dotted] (2.center) to node [fill=white, inner sep = 0.1mm] {\scriptsize $d_1$} (1.center);
    \draw [ultra thick, color = red] (10.center) to (8.center);
    \draw [ultra thick, color = red] (10.center) to (11.center);
    \draw [ultra thick, color = red] (12.center) to (13.center);
    \draw [ultra thick, color = red] (15.center) to (14.center);
    \draw (8.center) to (9.center);
    \draw (9.center) to (11.center);
    \draw (11.center) to (12.center);
    \draw (12.center) to (15.center);
    \draw (9.center) to (13.center);
    \draw (13.center) to (14.center);
    \draw [densely dotted] (11.center) to node [fill=white, inner sep = 0.1mm] {\scriptsize $d_2$} (13.center);
    \draw [ultra thick, color = red] (25.center) to (24.center);
    \draw (7.center) to (25.center);
    \draw (5.center) to (24.center);
    \draw (26.center) to (12.center);
    \draw (27.center) to (15.center);
    \draw [ultra thick, color = red] (26.center) to (27.center);
    \draw [ultra thick, color = red] (21.center) to (16.center);
    \draw [ultra thick, color = red] (21.center) to (20.center);
    \draw [ultra thick, color = red] (20.center) to (19.center);
    \draw [ultra thick, color = red] (22.center) to (23.center);
    \draw [ultra thick, color = red] (28.center) to (29.center);
    \draw [densely dotted] (22.center) to node [fill=white, inner sep = 0.1mm] {\scriptsize $d_3$} (19.center);
    \draw (16.center) to (18.center);
    \draw (18.center) to (23.center);
    \draw (23.center) to (28.center);
    \draw (29.center) to (19.center);
    \draw (22.center) to (17.center);
    \draw [ultra thick, color = red] (35.center) to (30.center);
    \draw [ultra thick, color = red] (35.center) to (34.center);
    \draw [ultra thick, color = red] (34.center) to (33.center);
    \draw (30.center) to (32.center);
    \draw (36.center) to (31.center);
    \draw [ultra thick, color = red] (37.center) to (33.center);
    \draw [ultra thick, color = red] (39.center) to (38.center);
    \draw [densely dotted] (39.center) to node [fill=white, inner sep = 0.1mm] {\scriptsize $d_4$} (37.center);
    \draw (38.center) to (37.center);
    \draw (33.center) to (32.center);
    \draw (36.center) to (39.center);
    \draw (36.center) to (37.center);
    \draw [ultra thick, color = red] (45.center) to (40.center);
    \draw [ultra thick, color = red] (45.center) to (44.center);
    \draw [ultra thick, color = red] (44.center) to (43.center);
    \draw (40.center) to (42.center);
    \draw (46.center) to (41.center);
    \draw [ultra thick, color = red] (47.center) to (43.center);
    \draw (43.center) to (42.center);
    \draw (46.center) to (49.center);
    \draw (46.center) to (47.center);
    \draw [ultra thick, color = red] (48.center) to (47.center);
    \draw (49.center) to (48.center);
    \draw [->] (50.center) to node [above] {$\tau_1$} (51.center) ;
    \draw [->](52.center) to node [above] {$\tau_2$}(53.center);
    \draw [->](54.center) to node [above] {$\tau_3$}(55.center);
    \draw [->](56.center) to node [above] {$\tau_4$}(57.center);
\end{tikzpicture}
\caption{Bijection between dimer covers of $\calG$ and lattice paths of $\calG^*$.}
\label{fig dimer-to-path}
\end{figure}
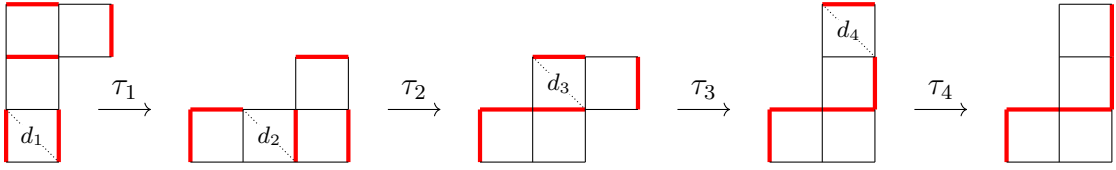

\section{Matrix Formula for $m$-Dimer Covers on Snake Graphs}

In this section, we introduce the notation $\llb a_1,\dots,a_n\rrb_m$ as a weighted sum of $m$-dimer covers on a straight snake graph 
and realize it as the top left entry of a matrix product $\Lambda^{(m)}(a_1)\cdots\Lambda^{(m)}(a_n)$. In section~\ref{sect 4}, we will show that it
is equal to the number of $m$-dimer covers of the snake graph $\calG[a_1,\ldots,a_n]$. 

Let $\Omega_m(G)$ denote the set of all $m$-dimer covers of a graph $G$ and $\Omega_m[a_1,\dots,a_n]$ denote the set of $m$-dimer covers on the snake graph $\mathcal G[a_1,\dots,a_n]$.

Also, define $\mathcal{L}_m[a_1,\dots,a_n]$ to be the set of $m$-lattice paths on the snake graph $\mathcal{G}[a_1,\dots,a_n]$,
and let $\mathcal{L}^*_m[a_1,\dots,a_n]$ be the set of $m$-lattice paths on the dual snake graph $\mathcal{G}^*$.
The following statement was noted by Propp, and is also explained in Claussen's thesis.

\begin {prop} \label{prop:dual_snake_bijection}
\textup{(\cite[Section~4]{propp05} and \cite[Chapter~6]{claussen20}).}
    There is a bijection between $\Omega_m[a_1,\dots,a_n]$ and $\mathcal{L}^*_m[a_1,\dots,a_n]$.
\end {prop}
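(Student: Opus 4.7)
The plan is to extend the $m=1$ bijection of \Cref{thm clausen} to general $m$ using the local reflection operations $\tau_i$ sketched in the paragraph preceding the proposition. The key observation is that both an $m$-dimer cover of $\calG$ and an $m$-lattice path on $\calG^*$ are, by definition, edge multisets subject to local conditions; since each $\tau_i$ is an involution on the edges of the relevant region of the current snake graph, it extends to edge-multisets by preserving multiplicities, via $\tau_i(M)(e) := M(\tau_i^{-1}(e))$.

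First I would make the $\tau_i$ action precise: at step $i$, $\tau_i$ reflects the portion of the current graph lying above the diagonal $d_i$ through the anti-diagonal of the $i$-th tile. A local check around tile $i$ then shows that if the multiset $M$ assigns total incidence exactly $m$ at every vertex, so does $\tau_i(M)$ in the reflected graph. Consequently $\tau_d \circ \cdots \circ \tau_1$ sends $\Omega_m(\calG)$ into the set of $m$-regular edge-multisets on $\calG^*$, and, since each $\tau_i$ is involutive, the reversed composition is a natural candidate for the inverse.

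The main obstacle is to show that the image of an $m$-dimer cover is in fact an $m$-lattice path, i.e.\ that the resulting $m$-regular edge-multiset on $\calG^*$ decomposes into $m$ monotone lattice paths from the south-west to north-east corner. For this I would verify two things: at the SW and NE corners the multiset has total degree $m$ concentrated on outgoing (respectively incoming) edges, giving $m$ ``starts'' and $m$ ``ends''; and at every interior vertex, the total multiplicity on the incoming (south and west) edges equals that on the outgoing (north and east) edges, allowing a greedy path-peeling argument to produce a decomposition into $m$ monotone lattice paths. Both statements can be established by tracking how the local matching condition at a vertex of $\calG$ transforms through each reflection, or alternatively by decomposing $M \in \Omega_m(\calG)$ as an $\mathbb{N}$-linear combination of $1$-dimer covers (using bipartiteness of $\calG$ together with K\"onig's edge-coloring theorem) and applying \Cref{thm clausen} summand-by-summand; the latter route makes all conclusions immediate.

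Well-definedness on equivalence classes of multipaths is then automatic, because the entire construction is phrased in terms of edge-multisets, which is precisely the data recorded by an $m$-lattice path. Together with the involutive inverse, this yields the desired bijection between $\Omega_m[a_1,\dots,a_n]$ and $\mathcal L_m^*[a_1,\dots,a_n]$.
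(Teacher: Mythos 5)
Your argument is correct. Note that the paper itself gives no proof of this proposition --- it is quoted from Propp and from Claussen's thesis, with only an informal sketch of the reflection maps $\tau_i$ preceding the statement --- so your write-up is a genuinely self-contained route rather than a reproduction of the paper's. The key move, reducing to the $m=1$ case by splitting an $m$-dimer cover into $m$ perfect matchings via K\"onig's edge-coloring theorem (valid since $\calG$ is bipartite and the associated multigraph is $m$-regular) and then using that the composition $\tau_d \circ \cdots \circ \tau_1$ acts additively on edge multisets, is clean: additivity makes the image independent of the chosen decomposition, injectivity is immediate because each $\tau_i$ is a bijection on edges extended multiplicity-preservingly, and surjectivity follows because an $m$-multipath comes pre-decomposed into $m$ lattice paths, each of which pulls back to a perfect matching by the $m=1$ case --- so no K\"onig is needed in the reverse direction. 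Your alternative flow-decomposition argument (conservation at interior vertices plus acyclicity of the north/east orientation) would also work, but as you say it is superfluous once K\"onig is invoked. The one dependency you should flag explicitly is that you still use, as a black box, the fact that the $m=1$ bijection of \Cref{thm clausen} is realized edge-by-edge by the $\tau_i$ reflections; the paper asserts this but defers the verification to Claussen's thesis, so your proof is self-contained only modulo that input.
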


\begin {lemma} \label{lem:straight_snake}
    For a positive integer $a$, we have
    \[ \#\Omega_m[a] = \#\mathcal{L}^*_m[a] = \mbinom{a}{m} = {a+m-1\choose m} \]
\end {lemma}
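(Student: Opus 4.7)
The plan is to establish the chain of four equalities piece by piece. The first equality $\#\Omega_m[a] = \#\mathcal{L}^*_m[a]$ is immediate from \Cref{prop:dual_snake_bijection} applied to the single-entry sequence $[a]$, so no work is required there.

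For the last equality $\llb a \rrb_m = \mbinom{a}{m}$, I would directly unpack \Cref{def:straight-snake} in the degenerate case $n=1$. In that case $\mathcal{G}[1^1]$ consists of the single edge $e_1$ with its two endpoints, so the unique $m$-dimer cover $p$ assigns $p(e_1)=m$, giving $\llb a \rrb_m = \wt_p(e_1) = \mbinom{a}{m}$.

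The main combinatorial content lies in the middle equality $\#\mathcal{L}^*_m[a] = \mbinom{a}{m}$. First I would apply the duality procedure from \S2.3 to identify $\mathcal{G}^*[a] = \mathcal{G}[1^a]$, the straight horizontal strip of $a-1$ tiles with vertical edges $e_1,\ldots,e_a$. Next, I would observe that a single lattice path on this strip from the southwest corner to the northeast corner is determined entirely by the index $i$ of the vertical edge $e_i$ at which the path transitions from the bottom row to the top row, so there are exactly $a$ lattice paths, indexed by $\{1,\ldots,a\}$. Consequently an $m$-multipath corresponds to a multiset of size $m$ drawn from this set, and there are $\binom{a+m-1}{m}=\mbinom{a}{m}$ such multisets.

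The final step is to rule out collapse when passing from $m$-multipaths to $m$-lattice paths: I would argue that two distinct multisets of chosen vertical edges produce distinct edge multisets, since for each $i$ the multiplicity of $e_i$ in the edge multiset of the multipath equals the multiplicity of $i$ in the chosen multiset of paths. Hence the map from multisets of vertical edges to equivalence classes of multipaths is injective as well as surjective, giving $\#\mathcal{L}^*_m[a] = \mbinom{a}{m}$ and completing the chain. I do not foresee any serious obstacle; the only mild subtlety is the final recoverability check, which is routine once one notes that vertical-edge multiplicities are directly read off from the edge multiset.
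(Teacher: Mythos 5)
Your proposal is correct and follows essentially the same route as the paper: identify $\mathcal{G}^*[a]$ with the straight snake graph of $a-1$ tiles, observe that single lattice paths are indexed by the $a$ crossing edges, and count $m$-lattice paths as $m$-multisets of these, giving $\mbinom{a}{m}$. The paper's proof is terser --- it leaves implicit the first and third equalities in the chain and omits the check that distinct multisets of paths yield distinct edge multisets, which you verify explicitly by reading off the crossing-edge multiplicities --- but the underlying argument is the same.
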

\begin{proof}
    The graph $\mathcal{G}^*[a]$ is a vertical column of $a-1$ tiles. Any lattice path must go right on one of the horizontal edges,
    of which there are $a$. So  an $m$-lattice path amounts to choosing $m$ of these $a$ horizontal steps (possibly with repetition).
    There are thus $\mbinom{a}{m}$ choices. \end{proof}
\begin{remark}\label{rm:dimer-vs-path}
	Note that this fact can be proven directly working exclusively in terms of $m$-dimer covers of such a snake graph, but the proof is more complicated.
\end{remark}

Let $\mathcal G[1^n]=\mathcal G[\underbrace{1,\dots,1}_{n\text{ times}}]$ be the straight snake graph whose vertical edges are denoted $e_1,\dots,e_n$,
\[
    \mathcal G[{1^n}]=\begin{tikzpicture}[baseline=1em]
	\draw (1,0)--(0,0)-- node[left]{$ e_1$}(0,1) -- (1,1) -- node [left] {$e_2$} (1,0)--(2,0)-- node [left] {$e_3$} (2,1)--(1,1);
	\draw [dashed](2,0)--(4,0);
	\draw [dashed](2,1)--(4,1);
	\draw (4,0)--(4,1)--(5,1)--node[right]{$e_n$}(5,0)--(4,0);
	\end{tikzpicture}.
\]

In Definition \ref{def:straight-snake} below, we introduce a notation for the \emph{weighted} enumeration of the $m$-dimer covers on this graph.
We will later see that the \emph{unweighted} enumeration of $m$-dimer covers on an arbitrary snake graph can be written in terms of this weighted
sum on $\mathcal{G}[1^n]$.

\begin{definition}\label{def:straight-snake}
We define $$\llb a_1,\dots,a_n\rrb_m :=\sum_{p\in\Omega_m[1^n]}\prod_{i=1}^{n}\wt_p(e_i)$$
where
\[\wt_p(e_i) = \mbinom{a_i}{k} = {a_i+k-1\choose k}\]if $e_i$ is covered by $k$ edges in $p$, and denote $p(e_i)=k$. \end{definition}

The dual of $\mathcal{G}[1,1,\dots,1]$ is the zigzag snake graph 
\begin {center}
\begin {tikzpicture}
    \draw (0,0) -- (2,0) -- (2,1) -- (3,1) -- (3,2) -- (4,2) -- (4,3) -- (2,3) -- (2,2) -- (1,2) -- (1,1) -- (0,1) -- cycle;
    \draw (1,0) -- (1,1) -- (2,1) -- (2,2) -- (3,2) -- (3,3);

    \draw (0.5,0) node[below] {$e_1$};
    \draw (0.8,0.5) node  {$e_2$};
    \draw (1.5,0.8) node  {$e_3$};
    \draw (1.8,1.5) node  {$e_4$};
    \draw (2.5,1.7) node  {\reflectbox{$\ddots$}};
    \draw (2.9,2.5) node  {$e_{n-1}$};
    \draw (3.5,3.2) node  {$e_n$};
\end {tikzpicture}
\end {center}

So we can equivalently define
\[ \llb a_1,\dots,a_n\rrb_m := \sum_{p \in \mathcal{L}^*_m[1^n]} \prod_{i=1}^{n} \wt_p(e_i) \]

\begin{definition} \label{def:lambda_matrix}
	Define the matrix $\Lambda^{(m)}(a):=(A_{ij})_{1\leq i,j,\leq m+1}$ where $A_{ij}=\#\Omega_{{m+2-i-j}}[a]= \llb a\rrb_{m+2-i-j}$. Recall that  $\llb a\rrb_0=1$ and $\llb a\rrb_k=0$ if $k<0$. For example
	\[
            \Lambda^{(4)}(a):=
            \begin{pmatrix}
                    \llb a\rrb_4&\llb a\rrb_3&\llb a\rrb_2&\llb a\rrb_1&1\\
                    \llb a\rrb_3&\llb a\rrb_2&\llb a\rrb_1&1&0\\
                    \llb a\rrb_2&\llb a\rrb_1&1&0&0\\
                    \llb a\rrb_1&1&0&0&0\\
                    1&0&0&0&0
            \end{pmatrix} 
            = \begin{pmatrix}
                \mbinom{a}{4} & \mbinom{a}{3} & \mbinom{a}{2} & a & 1 \\[0.5ex]
                \mbinom{a}{3} & \mbinom{a}{2} & a             & 1 & 0 \\[0.5ex]
                \mbinom{a}{2} & a             & 1             & 0 & 0 \\
                a             & 1             & 0             & 0 & 0 \\
                1             & 0             & 0             & 0 & 0
            \end{pmatrix}
	\]
\end{definition}

\begin{definition}
	We define $\pres{i}\llb a_1,\dots,a_n \rrb^j_m$ to be the weighted sum of the set of $m$-dimer covers on the straight snake graph $\mathcal G[1^n]$ 
        such that the left most edge is covered by $i$ edges and the right most edge is covered by $j$ edges, or equivalently,
	\[\pres{i}\llb a_1,\dots,a_n \rrb^j_m:=
	\sum_{
	\substack{p\in \Omega_m \mathcal G[1^n] \\ p(e_1)=i,\ p(e_n)=j} }\prod_{i=1}^n \wt_p(e_i)\]
	where $\wt_p(e_i)$ is as defined in Definition \ref{def:straight-snake}.
\end{definition}

\begin {prop}\label{propprop 3.7}
    For a sequence $a_1,\dots,a_n$, we have 
    \[ \pres{m}\llb 1, a_1, \dots, a_n, 1 \rrb^m_m = \llb a_1, \dots, a_n \rrb_m \]
\end {prop}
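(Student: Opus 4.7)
The plan is to build a weight-preserving bijection between the $m$-dimer covers of $\calG[1^{n+2}]$ satisfying the boundary constraints $p(e_1)=p(e_{n+2})=m$, and the (unconstrained) $m$-dimer covers of $\calG[1^n]$. Once such a bijection is in hand, the equality follows immediately: the weights $\wt_p(e_1)=\mbinom{1}{m}=1$ and $\wt_p(e_{n+2})=\mbinom{1}{m}=1$ contribute trivially, while the remaining weights $\mbinom{a_i}{p(e_{i+1})}$ match term-by-term with the weights $\mbinom{a_i}{p(e_i)}$ on the $\calG[1^n]$ side.

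The key geometric observation is that the boundary constraints force rigid local behaviour at the two ends of $\calG[1^{n+2}]$. The extreme bottom-left corner vertex has degree two, incident only to $e_1$ and the bottom edge of the leftmost tile $T_1$; since $p(e_1)=m$ already saturates the degree-$m$ condition at that vertex, the bottom edge of $T_1$ must be uncovered. The same argument applied to each of the four boundary corner vertices shows that \emph{all four} horizontal edges of $T_1$ and $T_{n+1}$ are uncovered in any admissible cover.

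With this rigidity in place, I would define the bijection by deleting these two ``locked'' end tiles, obtaining a subgraph naturally identified with $\calG[1^n]$ after relabelling $e_{i+1}$ as $e'_i$ for $1\le i\le n$. Checking that the restricted edge multiset is a genuine $m$-dimer cover of $\calG[1^n]$ reduces to verifying the degree-$m$ condition at the two pairs of vertices that transition from being interior (degree three in $\calG[1^{n+2}]$) to being corners (degree two in $\calG[1^n]$) under deletion. This is routine: since the horizontal edges of $T_1$ and $T_{n+1}$ were shown to be uncovered, removing them leaves the degree sums at the junction vertices unchanged. The inverse map adjoins two empty tiles at the ends, declaring the outer vertical edges to be fully covered and the new horizontal edges to be uncovered, and the same local degree check shows this extension is well-defined.

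I do not expect any serious obstacle: the entire content of the statement is that the extremal boundary data, together with the unit weights $\mbinom{1}{\bullet}=1$ coming from the two flanking $1$'s in the sequence, render the two added end tiles purely bookkeeping. The only step requiring any real care is the short degree-matching calculation at the junction vertices where the boundary transitions between the two graphs.
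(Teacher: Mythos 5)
Your proof is correct and follows essentially the same route as the paper: both establish a weight-preserving bijection by stripping off the two end tiles of $\calG[1^{n+2}]$, using that the constraints $p(e_1)=p(e_{n+2})=m$ force the flanking structure to be rigid and that $\wt_p(e_1)=\wt_p(e_{n+2})=\mbinom{1}{m}=1$. You simply spell out the forced-horizontal-edge and junction-vertex degree checks that the paper's (terser) proof leaves implicit.
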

\begin {proof}
    The terms in the sum $\pres{m}\llb 1, a_1, \dots, a_n, 1 \rrb^m_m$ are indexed by $m$-dimer covers on $\mathcal{G}[1^{n+2}]$,
    where the left-most and right-most edges (which are labelled by $1$'s) have multiplicity exactly $m$. Removing these multiplicity-$m$
    edges, what remains is an $m$-dimer cover on $\mathcal{G}[1^n]$ (the subgraph without the first and last square). Since the internal vertical edges
    are labelled by $a_1, \dots, a_n$, this gives a weight-preserving bijection.
\end {proof}

We are now ready to state our matrix formula.
\begin{theorem}\label{thm:matrix}
	Let $X=(X_{ij})_{1\leq i,j\leq m+1}$ be the matrix product \(X=\Lambda^{(m)}(a_1)\cdots \Lambda^{(m)}(a_n)\). Then
	 $X_{i,j}=\pres{m+1-i}\llb 1,a_1,\dots,a_n,1\rrb^{m+1-j}_m$.\footnote{Note that $\llb 1,a_1,\dots,a_n,1\rrb$ is very similar to the extended triangulation in section 5 of \cite{moz3}.} 
	
	In particular, the top-left entry is 
        $$X_{1,1}=\pres{m}\llb 1,a_1,\dots,a_n,1\rrb^{m}_m=\llb a_1,\dots,a_n\rrb_m.$$
\end{theorem}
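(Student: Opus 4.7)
My plan is to prove the formula $X_{i,j} = \pres{m+1-i}\llb 1, a_1, \dots, a_n, 1\rrb^{m+1-j}_m$ by a direct combinatorial bijection rather than by induction on $n$. Expanding the matrix product and using that $\Lambda^m(a)_{s,t} = \llb a\rrb_{m+2-s-t}$, the $(i,j)$-entry becomes
\[
X_{i,j} \;=\; \sum_{\substack{k_0 = i,\, k_n = j \\ 1 \le k_1, \dots, k_{n-1} \le m+1}} \prod_{\ell=1}^{n} \llb a_\ell\rrb_{m+2-k_{\ell-1}-k_\ell}.
\]
The goal is to identify each summation tuple $(k_0, k_1, \dots, k_n)$ with a unique $m$-dimer cover $p$ on the straight snake graph $\mathcal G[1^{n+2}]$ satisfying $p(e_1) = m+1-i$ and $p(e_{n+2}) = m+1-j$, in such a way that the summand matches the weight of $p$ in the definition of $\pres{m+1-i}\llb 1, a_1, \dots, a_n, 1\rrb^{m+1-j}_m$.

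The bijection I propose is: given such an $m$-dimer cover $p$, define $k_\ell := h_{\ell+1} + 1$ for $0 \le \ell \le n$, where $h_{\ell+1}$ is the common multiplicity (forced by top-bottom symmetry of the vertex equations) of the top and bottom horizontal edges in the $(\ell+1)$-st tile of $\mathcal G[1^{n+2}]$. The vertex-degree constraints $h_1 = m - p(e_1)$, $\;h_\ell + p(e_{\ell+1}) + h_{\ell+1} = m$, and $h_{n+1} = m - p(e_{n+2})$ then give $k_0 = i$, $k_n = j$, and $p(e_{\ell+1}) = m + 2 - k_{\ell-1} - k_\ell$ for $1 \le \ell \le n$. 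Under this correspondence the matrix summand equals $\prod_\ell \llb a_\ell\rrb_{p(e_{\ell+1})}$, which together with the trivial factors $\llb 1\rrb_{p(e_1)} = \llb 1\rrb_{p(e_{n+2})} = 1$ coming from the label-$1$ boundary edges is exactly the weight of $p$ in the definition of $\pres{m+1-i}\llb 1, a_1, \dots, a_n, 1\rrb^{m+1-j}_m$. The consequent $X_{1,1} = \llb a_1,\dots,a_n\rrb_m$ then follows immediately from the proposition stated just before the theorem.

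The main obstacle I anticipate is verifying that the bijection is well-defined in both directions and that no spurious terms arise. The equality $h_\ell^{\mathrm{top}} = h_\ell^{\mathrm{bot}}$ in every tile follows inductively from the identical recurrences on the top and bottom rows of the straight snake graph, starting from the shared boundary value $m - p(e_1)$, so the definition of $h_{\ell+1}$ is unambiguous. The index bounds $1 \le k_\ell \le m+1$ translate exactly into $0 \le h_{\ell+1} \le m$, which holds for any edge multiplicity in an $m$-dimer cover. Finally, tuples that do \emph{not} correspond to a valid dimer cover violate $p(e_{\ell+1}) = m + 2 - k_{\ell-1} - k_\ell \ge 0$ for some $\ell$, and the convention $\llb a\rrb_k = 0$ for $k < 0$ makes those terms vanish; so the bijection gives a perfect match between the non-zero summands of the matrix product and the set of $m$-dimer covers being counted.
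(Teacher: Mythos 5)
Your proof is correct, but it takes a genuinely different route from the paper. The paper proceeds by induction on $n$: it first proves a one-step recursion (Lemma~\ref{lem:matrix-prod}/Corollary~\ref{cor:matrix-prod}) by peeling off the last tile of $\calG[1^{n+2}]$ and grouping dimer covers by the multiplicity of $e_{n+1}$, and then matches that recursion against multiplication by $\Lambda^m(a_n)$, which requires a reindexing $r = m+2-j-k$ and a separate check that the out-of-range terms vanish. You instead expand the entire product $\Lambda^m(a_1)\cdots\Lambda^m(a_n)$ as a sum over index tuples $(k_0,\dots,k_n)$ and exhibit a single global bijection between the nonzero summands and the $m$-dimer covers of $\calG[1^{n+2}]$ with prescribed boundary multiplicities, using the observation that the horizontal edge multiplicities $h_\ell$ are forced by the vertical ones (and agree on top and bottom), so that $k_\ell = h_{\ell+1}+1$ recovers the summation index. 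Both arguments hinge on the same structural fact about forced horizontal edges — it is exactly what drives the paper's proof of Lemma~\ref{lem:matrix-prod} locally at the last tile — but your version makes the transfer-matrix nature of $\Lambda^m(a)$ explicit: the matrix index literally records the horizontal multiplicity in each tile, the bound $1 \le k_\ell \le m+1$ corresponds to $0 \le h_{\ell+1} \le m$, and the convention $\llb a\rrb_k = 0$ for $k<0$ automatically kills the tuples that violate nonnegativity of $p(e_{\ell+1})$. What the paper's route buys is modularity (the recursion of Corollary~\ref{cor:matrix-prod} is reused elsewhere in spirit, e.g.\ in the recursive definition of $r_{k,m}$ in Definition~\ref{def:r_m}); what yours buys is a one-shot, more transparent explanation of why the entries of $\Lambda^m(a)$ are arranged along antidiagonals. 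Your appeal to Proposition~\ref{prop:dual_snake_bijection}'s neighbor, the proposition $\pres{m}\llb 1,a_1,\dots,a_n,1\rrb^m_m = \llb a_1,\dots,a_n\rrb_m$, for the final claim about $X_{1,1}$ matches the paper exactly.
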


Once Theorem \ref{thm:matrix} is proven, we will show our main result (Theorem \ref{thm intro1}) that this quantity also equals the number 
of $m$-dimer covers in Section \ref{sect 4} after introducing more background on working with these matrices.

We will need the following lemma to prove Theorem \ref{thm:matrix}. 
\begin{lemma}\label{lem:matrix-prod}For any positive integers $a_1,\dots,a_n$, we have
		\[\pres{i}\llb 1,a_1,\dots,a_n,1\rrb_m^{j}=\sum_{r=0}^{j}\pres{i}\llb 1,a_1,\dots,a_{n-1},1\rrb_m^{r+m-j} \cdot \llb a_n\rrb_{r}\]
\end{lemma}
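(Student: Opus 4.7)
The plan is to set up a weight-preserving bijection that ``peels off'' the last vertical edge of the straight snake graph $\mathcal{G}[1^{n+2}]$ associated with $\pres{i}\llb 1,a_1,\dots,a_n,1\rrb_m^j$. Concretely, I fix an $m$-dimer cover $p$ contributing to the left-hand side, record the statistic $r:=p(e_{n+1})$, and show that after removing the last vertical edge $e_{n+2}$ together with the two horizontal edges $h_{n+1}^\pm$ of the last tile, what remains is an $m$-dimer cover $p'$ of the shorter straight snake graph $\mathcal{G}[1^{n+1}]$ with $p'(e_1)=i$ and $p'(e_{n+1})$ determined by $r,j,m$. Summing over $r$ will then yield the recursion.

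The key calculation is local to the last tile. Let $h_k^\pm$ denote the top and bottom horizontal edges of the $k$-th tile of $\mathcal{G}[1^{n+2}]$. The two degree-$2$ rightmost vertices force $p(h_{n+1}^\pm)=m-j$. The next pair of (degree-$3$) vertices, which sit between the last two tiles, then force
\[ p(h_n^\pm) \;=\; m - p(e_{n+1}) - p(h_{n+1}^\pm) \;=\; j-r. \]
In particular $0\le r\le j$, giving the summation range on the right-hand side. Now consider the graph $\mathcal{G}[1^{n+1}]$ obtained by deleting $e_{n+2}$ and $h_{n+1}^\pm$. An $m$-dimer cover $p'$ on it with $p'(e_k)=p(e_k)$ for $k\le n$ must satisfy $p'(h_n^\pm)=m-p'(e_{n+1})$ at the now degree-$2$ rightmost vertices, so in order to agree with $p$ on the interior horizontal edges we are forced to set $p'(e_{n+1})=r+m-j$. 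This assignment gives a bijection between the set of $m$-dimer covers $p$ of $\mathcal{G}[1^{n+2}]$ with $p(e_1)=i$, $p(e_{n+2})=j$, $p(e_{n+1})=r$ and the set of $m$-dimer covers $p'$ of $\mathcal{G}[1^{n+1}]$ with $p'(e_1)=i$, $p'(e_{n+1})=r+m-j$.

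Finally, I track weights. The outermost edges carry weight $\mbinom{1}{\cdot}=1$, so deleting $e_{n+2}$ changes no weight. On $e_{n+1}$, the weight drops from $\mbinom{a_n}{r}$ (in $\pres{i}\llb 1,a_1,\dots,a_n,1\rrb_m^j$) to $\mbinom{1}{r+m-j}=1$ (in $\pres{i}\llb 1,a_1,\dots,a_{n-1},1\rrb_m^{r+m-j}$), and the weights on $e_2,\dots,e_n$ are unchanged by the bijection. Thus the weight of $p$ is the weight of $p'$ times $\mbinom{a_n}{r}=\llb a_n\rrb_r$, and summing over $r=0,\dots,j$ gives the identity.

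The only potentially delicate step is checking that the local constraints at the last tile really pin down the edge coverages in terms of $r$ and $j$ alone (and not some other combination), and that no implicit constraint is lost when we delete the last tile; this is handled by the straightforward degree bookkeeping at each of the four vertices of the last tile above. Everything else is a routine product expansion.
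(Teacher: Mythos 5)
Your proof is correct and follows essentially the same route as the paper's: group the $m$-dimer covers of $\mathcal{G}[1^{n+2}]$ by $r=p(e_{n+1})$, note that the horizontal edges of the last two tiles are forced to multiplicities $m-j$ and $j-r$, and peel off the last tile to get a weight-preserving bijection with covers of $\mathcal{G}[1^{n+1}]$ having right boundary multiplicity $r+m-j$. Your degree bookkeeping at the four vertices of the last tile is in fact slightly more explicit than the paper's, but the argument is identical in substance.
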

\begin{proof}
Let $\mathcal G[1^{n+2}]$ be the straight snake graph with $n+1$ tiles where the vertical edges are labelled by $e_1,\dots,e_{n+2}$. 
We consider the set of all $m$-dimer covers such that the left most edge is covered by an $i$-dimer and the right most edge is covered by a $j$-dimer, 
and re-group them based on $p(e_{n+1})$, as illustrated below.

\[
\begin{tikzpicture}[baseline=1.25em,scale=0.5]
	\draw (0,0)--(2,0)--(4,0)--(8,0)--(10,0);
	\draw (0,2)--(2,2)--(4,2)--(8,2)--(10,2);
	\draw (0,0)--(0,2);
	\draw (2,0)--(2,2);
	\draw (6,0)--(6,2);
	\node () at (4,1) {$\cdots$};
	\draw (8,0)--(8,2);
	\draw (10,0)--(10,2);
	
	\node () [fill=white,scale=0.6] at (0,1) {$i$};
	\node () [fill=white,scale=0.6] at (10,1) {$j$};
\end{tikzpicture}=\sum_{r=0}^{j}
\begin{tikzpicture}[baseline=1.25em,scale=0.5]
	\draw (0,0)--(2,0)--(4,0)--(8,0)--(10,0);
	\draw (0,2)--(2,2)--(4,2)--(8,2)--(10,2);
	\draw (0,0)--(0,2);
	\draw (2,0)--(2,2);
	\draw (6,0)--(6,2);
	\draw (8,0)--(8,2);
	\draw (10,0)--(10,2);
	\node () at (4,1) {$\cdots$};
	
	\node () [fill=white,scale=0.6] at (0,1) {$i$};
	\node () [fill=white,scale=0.6] at (10,1) {$j$};
	\node () [fill=white,scale=0.6] at (8,1) {$r$};
	\node () [fill=white,scale=0.6] at (9,2.4) {$m-j$};
	\node () [fill=white,scale=0.6] at (9,-0.4) {$m-j$};
	\node () [fill=white,scale=0.6] at (7,2.4) {$j-r$};
	\node () [fill=white,scale=0.6] at (7,-0.4) {$j-r$};
\end{tikzpicture}
\]

Now we analyze the terms on the right hand side summation. Each term corresponds to a set of $m$-dimer covers $p$ of $\mathcal G[1^{n+2}]$ 
with $p(e_1)=i,p(e_{n+1})=r,p(e_{n+2})=j$. Observe that in this case the horizontal edges in the second to last tile are forced to be covered by $(j-r)$ edges.

Take a smaller snake graph $\mathcal G[1^{n+1}]$, and to distinguish from the previous one, we label the vertical edges by $f_1,\dots,f_{n+1}$. 
Now consider the set of $m$-dimer covers $p$ of $\mathcal G[1^{n+1}]$ such that $p(f_1)=i,p(f_{n+1})=m-j+r$, 
which corresponds to the terms in $\pres{i}\llb 1,a_1,\dots,a_{n-1},1\rrb_m^{m-j+r}$. 
Notice that the horizontal edges of the last tile are forced to be covered by $(j-r)$ edges. 
There is an obvious bijection $\varphi$ between the above mentioned two sets given by
\(\varphi(p)(f_k)=p(e_k)\) for $1\leq k\leq n$.

\[
\begin{tikzpicture}[baseline=1.25em,scale=0.5]
	\draw (0,0)--(2,0)--(4,0)--(8,0)--(10,0);
	\draw (0,2)--(2,2)--(4,2)--(8,2)--(10,2);
	\draw (0,0)--(0,2);
	\draw (2,0)--(2,2);
	\draw (6,0)--(6,2);
	\draw (8,0)--(8,2);
	\draw (10,0)--(10,2);
	\node () at (4,1) {$\cdots$};
	\node [scale=0.5,]at (0.25,0.45) {$\textcolor{blue}{e_1}$};
	\node [scale=0.5,]at (2.25,0.45) {$\textcolor{blue}{e_2}$};
	\node [scale=0.5,]at (6.25,0.45) {$\textcolor{blue}{e_{n}}$};
	\node [scale=0.5,]at (8.45,0.45) {$\textcolor{blue}{e_{n+1}}$};
	\node [scale=0.5,]at (10.45,0.45) {$\textcolor{blue}{e_{n+2}}$};
	
	\node () [fill=white,scale=0.7] at (0,1) {$i$};
	\node () [fill=white,scale=0.7] at (10,1) {$j$};
	\node () [fill=white,scale=0.7] at (8,1) {$r$};
	\node () [fill=white,scale=0.7] at (9,2.4) {$m-j$};
	\node () [fill=white,scale=0.7] at (9,-0.4) {$m-j$};
	\node () [fill=white,scale=0.7] at (7,2.4) {$j-r$};
	\node () [fill=white,scale=0.7] at (7,-0.4) {$j-r$};
\end{tikzpicture}\;
\begin{tikzpicture}
\draw (0,0.8mm) -- (0,-0.8mm);
\newlength\mylength
\setlength{\mylength}{\widthof{$\hspace{0.6em}\varphi\hspace{0.6em}$}}
\draw[->] (0,0) -- (1.2\mylength,0) node[above,midway] {$\varphi$};
\end{tikzpicture}\;
\begin{tikzpicture}[baseline=1.25em,scale=0.5]
	\draw (0,0)--(2,0)--(4,0)--(8,0);
	\draw [dotted] (8,0) --(10,0);
	\draw (0,2)--(2,2)--(4,2)--(8,2);
	\draw [dotted] (8,2)--(10,2);
	\draw (0,0)--(0,2);
	\draw (2,0)--(2,2);
	\draw (6,0)--(6,2);
	\draw (8,0)--(8,2);
	\draw [dotted] (10,0)--(10,2);
	\node () at (4,1) {$\cdots$};
	
	\node () [fill=white,scale=0.7] at (0,1) {$i$};
	\node () [fill=white,scale=0.7] at (8,1) {$m-j+r$};
	\node () [fill=white,scale=0.7] at (7,2.4) {$j-r$};
	\node () [fill=white,scale=0.7] at (7,-0.4) {$j-r$};
	
	\node [scale=0.5,]at (0.25,0.45) {$\textcolor{blue}{f_1}$};
	\node [scale=0.5,]at (2.25,0.45) {$\textcolor{blue}{f_2}$};
	\node [scale=0.5,]at (6.25,0.45) {$\textcolor{blue}{f_{n}}$};
	\node [scale=0.5,]at (8.45,0.45) {$\textcolor{blue}{f_{n+1}}$};

\end{tikzpicture}
\]

Let $\wt_p(e_k)$ and $\wt_p(f_k)$ be defined as in $\llb 1,a_1,\dots,a_{n},1\rrb_m$ and $\llb 1,a_1,\dots,a_{n-1},1\rrb_m$ respectively. 
Notice that we have $\wt_p(e_k)=\wt_{\varphi(p)}(f_k)$ for all $1\leq k\leq n$ by construction. In addition, $\wt_{\varphi(p)}(f_{n+1})=1$,
because the last entry in the continued fraction is equal to 1, and $\mbinom{1}{k} = 1$ for any $k$.

Now take any $p\in \Omega_m[1^{n+2}]$ with $p(e_1)=i,p(e_{n+1})=r$ and $p(e_{n+2})=j$. Then we have
\[\prod_{k=1}^{n+2}\wt_p(e_k)=\left(\prod_{k=1}^{n}\wt_p(e_k)\right)\wt_p(e_{n+1})\wt_p(e_{n+2})=\left(\prod_{k=1}^{n}\wt_p(e_k)\right)\llb a_n\rrb_r\]
since $\wt_p(e_{n+1})=\llb a_n\rrb_r$ and $\wt_p(e_{n+2}) = \llb 1 \rrb_j = 1$. Under the above mentioned bijection, the perfect matching $\varphi(p)$ contributes 
\[\prod_{k=1}^{n+1}\wt_{{\varphi(p)}}(f_k)=\prod_{k=1}^{n}\wt_{\varphi(p)}(f_k)=\prod_{k=1}^n\wt_p(e_k)\]
to $\pres{i}\llb 1,a_1,\dots,a_{n-1},1\rrb_m^{m+j-r}$. Putting these equations together we get
\[\prod_{k=1}^{n+2}\wt_p(e_k)=\left(\prod_{k=1}^{n+1}\wt_{{\varphi(p)}}(f_k)\right)\llb a_n\rrb_r  
\] 
and summing over all $p$'s gives
\[\pres{i}\llb 1,a_1,\dots,a_n,1\rrb_m^{j}=\sum_{r=0}^{j}\pres{i}\llb 1,a_1,\dots,a_{n-1},1\rrb_m^{r+m-j} \cdot \llb a_n\rrb_{r}\]
as desired.
\end{proof}

For convenience, we state a different version of the lemma.
\begin{corollary}
	\label{cor:matrix-prod}For any positive integers $a_1,\dots,a_n$, we have
		\[\pres{m+1-i}\llb 1,a_1,\dots,a_n,1\rrb_m^{m+1-j}=\sum_{r=0}^{m+1-j}\pres{m+1-i}\llb 1,a_1,\dots,a_{n-1},1\rrb_m^{r+j-1} \cdot \llb a_n\rrb_{r}\]

\end{corollary}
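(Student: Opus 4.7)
The plan is to derive \Cref{cor:matrix-prod} directly from \Cref{lem:matrix-prod} by a change of indices; no additional combinatorial reasoning is needed. The two statements are formally equivalent under the substitution of the upper indices on the weighted sums $\pres{i}\llb \cdots \rrb^j_m$.

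Concretely, I would start from the identity established in \Cref{lem:matrix-prod},
\[
\pres{i}\llb 1,a_1,\dots,a_n,1\rrb_m^{j}=\sum_{r=0}^{j}\pres{i}\llb 1,a_1,\dots,a_{n-1},1\rrb_m^{r+m-j} \cdot \llb a_n\rrb_{r},
\]
and apply the substitutions $i \mapsto m+1-i$ and $j \mapsto m+1-j$. This is legitimate because in the lemma each of $i$ and $j$ ranges over $\{0,1,\dots,m\}$ (the possible multiplicities of a single edge in an $m$-dimer cover), and these values are in bijection with the set $\{1,2,\dots,m+1\}$ of matrix indices used in the corollary. Under this substitution, the upper limit of the sum, formerly $j$, becomes $m+1-j$, while the exponent $r+m-j$ on the right-hand side transforms as
\[
r + m - (m+1-j) = r + j - 1,
\]
which is precisely the exponent appearing in the statement of the corollary. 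The weight $\llb a_n\rrb_r$ on the right is unaffected, since the summation variable $r$ has not been relabeled.

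The motivation for recording this reindexed form is purely notational: in \Cref{thm:matrix}, the matrix $\Lambda^m(a)$ has entries indexed from $1$ to $m+1$, and the sum over $r$ in the corollary will be interpreted as a single row-column contraction against $\Lambda^m(a_n)$ in the inductive step. Writing the identity in the form stated in the corollary makes this matching transparent, without having to translate back and forth through the involution $i \leftrightarrow m+1-i$. I do not anticipate any obstacle here, as \Cref{lem:matrix-prod} has already done all of the combinatorial work (namely the bijection removing the final tile and reading off the forced horizontal multiplicities); the proof of the corollary will therefore occupy one or two lines.
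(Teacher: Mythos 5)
Your proposal is correct and is exactly the paper's proof: the corollary is obtained from \Cref{lem:matrix-prod} by the substitution $i\mapsto m+1-i$, $j\mapsto m+1-j$, and your verification that the exponent $r+m-j$ becomes $r+j-1$ and the upper summation limit becomes $m+1-j$ is the only arithmetic required.
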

\begin{proof}
	It follows from Lemma \ref{lem:matrix-prod} by setting $i\mapsto m+1-i$ and $j\mapsto m+1-j$.
\end{proof}

\begin{proof}[Proof of Theorem \ref{thm:matrix}]
    We proceed by induction on $n$, noting that the base case of $\Lambda^{(m)}(a_1)_{i,j} =$ $~^{m+1-i}\llb 1,a_1,1 \rrb^{m+1-j}_m = \llb a_1 \rrb_{m+2-i-j}$ 
    follows from Proposition \ref{propprop 3.7} and Definition \ref{def:lambda_matrix}.
     
    Let $\Lambda^{(m)}(a_1)\cdots \Lambda^{(m)}(a_{n-1})=(X_{ij})=X$ and let $X\Lambda^{(m)}(a_{n})=(Y_{ij})$. Suppose the formula holds for $X_{ij}$ and we will prove it for $Y_{ij}$.
    
    Matrix multiplication yields \[Y_{ij}=\sum_{k=1}^{m+1}X_{ik}\llb a_n\rrb_{m+2-k-j}\]

    By induction, $X_{ik} = \pres{m+1-i}\llb 1,a_1,\dots,a_{n-1},1\rrb_m^{m+1-k}$, so we have
    \[Y_{ij} = \sum_{k=1}^{m+1}\pres{m+1-i}\llb 1,a_1,\dots,a_{n-1},1\rrb_m^{m+1-k} \cdot \llb a_n\rrb_{m+2-j-k}\]
    If we re-index so that $r = m+2-j-k$, we get
    \[Y_{ij} = \sum_{r=1-j}^{m+1-j}\pres{m+1-i}\llb 1,a_1,\dots,a_{n-1},1\rrb_m^{r+j-1} \cdot \llb a_n\rrb_{r}\]
    Since $j \geq 1$, the first few terms of this sum may have $r < 0$. Those terms will automatically be zero,
    since they have a factor of $\llb a_n \rrb_r =  \mbinom{a_n}{r}$, which is zero for negative $r$.
    Removing these terms that are automatically zero, this becomes
    \[Y_{ij} = \sum_{r=0}^{m+1-j}\pres{m+1-i}\llb 1,a_1,\dots,a_{n-1},1\rrb_m^{r+j-1} \cdot \llb a_n\rrb_{r}\]
    By Corollary \ref{cor:matrix-prod}, we get that $Y_{ij} = \pres{m+1-i}\llb 1,a_1,\dots,a_n,1\rrb_m^{m+1-j}$.
\end{proof}

\section {Matrix Products in $\mathrm{SL}_{m+1}(\Bbb{Z})$}\label{sect 4}
In this section, we prove Theorem \ref{thm intro1}.
Let us consider the following integer matrices.
\[\Lambda(a)=  \begin{pmatrix} a & 1 \\ 1 & 0 \end{pmatrix} \quad\quad L = \begin{pmatrix} 1 & 0 \\ 1 & 1 \end{pmatrix} \quad \text{ and } \quad R = \begin{pmatrix} 1 & 1 \\ 0 & 1 \end{pmatrix} \]
The matrices $L,R$ generate the group $\mathrm{SL}_2(\Bbb{Z})$, and $\Lambda(a)\in \mathrm{GL}_2(\Bbb{Z})$.
It is well-known that the continued fraction $[a_1,a_2,\ldots,a_n]$
can be computed by the matrix product

\begin{equation}\label{eq mat}
 \Lambda(a_1)\Lambda(a_2)\cdots\Lambda(a_n) = \begin{pmatrix} p_n& p_{n-1} \\ q_n & q_{n-1} \end{pmatrix}
\end{equation}

where $\frac{p_n}{q_n}=[a_1,a_2,\ldots,a_n]$ and 
$\frac{p_{n-1}}{q_{n-1}}=[a_1,a_2,\ldots,a_{n-1}]$ (with the convention that $\frac{p_0}{q_0} = \frac{1}{0}$). See for example \cite{reutenauer}. 

Moreover, if $W = \begin{pmatrix} 0 & 1 \\ 1 & 0 \end{pmatrix}$, then
\[ W \Lambda(a) =\begin{pmatrix} 1&0\\a&1 \end{pmatrix}
=\begin{pmatrix} 1&0\\1&1 \end{pmatrix}^a =L^a
\quad \text{ and } \quad 
 \Lambda(a) W =\begin{pmatrix} 1&a\\0&1 \end{pmatrix}
=\begin{pmatrix} 1&1\\0&1\end{pmatrix}^a =R^a.
\]
Since $W^2=1$, we can rewrite equation~(\ref{eq mat}) as follows
\[
\begin{pmatrix} p_n& p_{n-1} \\ q_n & q_{n-1} \end{pmatrix}
=
\left\{
\begin{array}{ll}
R^{a_1}L^{a_2}R^{a_3}L^{a_4}\cdots R^{a_{n-1}}L^{a_n} &\textup{if $n$ is even;}\\
R^{a_1}L^{a_2}R^{a_3}L^{a_4}\cdots L^{a_{n-1}}R^{a_n}W &\textup{if $n$ is odd.}\\
\end{array}
\right.
\]
%

\begin {example}
	The continued fraction for $\frac{43}{30}$ is $[1,2,3,4]$, and we have
	\[ RL^2R^3L^4 = \Lambda(1)\Lambda(2)\Lambda(3)\Lambda(4) = \begin{pmatrix} 43 & 10 \\ 30 & 7 \end{pmatrix} \]
\end {example}

\begin {definition}
	Let $L_m$ and $R_m$ be the following lower and upper triangular matrices in $\mathrm{SL}_{m+1}(\Bbb{Z})$ whose non-zero entries are all equal to $1$:
	\[
		L_m = \begin{pmatrix}
			1 & 0 & 0 & \cdots & 0 \\
			1 & 1 & 0 & \cdots & 0 \\
			\vdots & \vdots & \vdots & \ddots & \vdots \\
			1 & 1 & 1 & \cdots & 1
		\end{pmatrix} 
		\quad \text{ and } \quad
		R_m = \begin{pmatrix}
			1 & 1 & 1 & \cdots & 1 \\
			0 & 1 & 1 & \cdots & 1 \\
			\vdots & \vdots & \vdots & \ddots & \vdots \\
			0 & 0 & 0 & \cdots & 1
		\end{pmatrix}
	\]
\end {definition}

\begin {remark}
	The $2 \times 2$ matrices $L$ and $R$ described earlier are the special cases when $m=1$. We will usually omit the subscript $m$,
	and just use ``$L$'' and ``$R$'', even when $m > 1$, when no confusion will arise.
\end {remark}

\begin {lemma}\label{lemlem 44}
	If $k \geq 0$, then the $(i,i+k)$-entry of $R^a$ is given by
	\[ (R^a)_{i,i+k} = \mbinom{a}{k} \]
        All entries below the main diagonal in $R^a$ are zero.
\end {lemma}
\begin {proof}
        The fact that $R^a$ is upper-triangular is clear (since $R$ is upper-triangular).

        To show the formula for the entries above the diagonal, 
	induct on $a$. When $a=1$, we always have $\mbinom{1}{k} = \binom{1+k-1}{k} = \binom{k}{k} = 1$. And indeed, $R^1 = R$ has all entries above the diagonal equal to 1.
	For $a > 1$, and $k \geq 0$, we have
	\begin {align*} 
		(R^a)_{i,i+k}   &= (R \cdot R^{a-1})_{i,i+k} \\
				&= \sum_\ell R_{i \ell} (R^{a-1})_{\ell, i+k} \\
				&= \sum_{\ell \geq i} (R^{a-1})_{\ell, i+k} \\
				&= \sum_{\ell \geq i} \mbinom{a-1}{i+k-\ell} \\
				&= \sum_{p = 0}^k \mbinom{a-1}{p}
	\end {align*}
	Recall that $\mbinom{a-1}{p} = \binom{a+p-2}{p} = \binom{a+p-2}{a-2}$. Summing over $p$ gives $\mbinom{a}{k}$, which follows from a well-known formula 
	for the sum of a diagonal of Pascal's triangle.
\end {proof}

\begin {remark}
	Since $L = R^\top$, Lemma \ref{lemlem 44} also gives an expression for powers of $L$:
        \[ (L^a)_{i+k,i} = \mbinom{a}{k} \]
\end {remark}

Let $W$ be the permutation matrix for the longest word in the symmetric group $S_{m+1}$. 
It is the $(m+1) \times (m+1)$ anti-diagonal matrix with $1$'s along the anti-diagonal.

Then the matrix $\Lambda^{(m)}(a)$ from Definition \ref{def:lambda_matrix} is equal to
\begin {equation} \label{eqn:lambda_LR}
    \Lambda^{(m)}(a) = W L^a = R^a W 
\end {equation}

Again, when $m=1$ this agrees with the $2 \times 2$ matrix $\Lambda$ mentioned above. The following gives another interpretation of Theorem \ref{thm:matrix}.

\begin {theorem}
	The matrix $X$ from Theorem \ref{thm:matrix} is equal to
	\[ 
            X = \begin{cases}
                R^{a_1} L^{a_2} R^{a_3} L^{a_4} \cdots L^{a_n} & \text{ if $n$ is even;} \\
                R^{a_1} L^{a_2} R^{a_3} L^{a_4} \cdots R^{a_n} W & \text{ if $n$ is odd.}
            \end{cases}
        \]
        \qed
\end {theorem}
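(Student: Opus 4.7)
The plan is to use the two equivalent expressions for $\Lambda^m(a)$ given in equation~\eqref{eqn:lambda_LR}, namely
\[ \Lambda^m(a) = W L^a = R^a W, \]
together with the obvious identity $W^2 = I$, to telescope the product $\Lambda^m(a_1)\Lambda^m(a_2)\cdots\Lambda^m(a_n)$ into the desired alternating form. The key observation is that by choosing the $R^aW$ form for factors in odd positions and the $WL^a$ form for factors in even positions, each adjacent pair of $W$'s that appears in the middle of the product cancels.

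Concretely, I would proceed by induction on $n$. The base case $n=1$ is immediate from the second equality in \eqref{eqn:lambda_LR}: $X = \Lambda^m(a_1) = R^{a_1}W$, matching the odd-$n$ formula. For the inductive step, suppose the claim holds for $n-1$. There are two cases:
\begin{itemize}
\item If $n$ is even, then $n-1$ is odd, so by induction
\[ \Lambda^m(a_1)\cdots\Lambda^m(a_{n-1}) = R^{a_1}L^{a_2}\cdots R^{a_{n-1}} W. \]
Multiplying on the right by $\Lambda^m(a_n) = W L^{a_n}$ and using $W^2=I$ produces $R^{a_1}L^{a_2}\cdots R^{a_{n-1}}L^{a_n}$, as claimed.
\item If $n$ is odd, then $n-1$ is even, so by induction
\[ \Lambda^m(a_1)\cdots\Lambda^m(a_{n-1}) = R^{a_1}L^{a_2}\cdots L^{a_{n-1}}. \]
Multiplying on the right by $\Lambda^m(a_n) = R^{a_n}W$ gives $R^{a_1}L^{a_2}\cdots L^{a_{n-1}}R^{a_n}W$, again matching the claim.
\end{itemize}

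There is essentially no obstacle here: the entire content of the theorem reduces to the two factorizations of $\Lambda^m(a)$ in \eqref{eqn:lambda_LR} and the relation $W^2=I$. The only care needed is bookkeeping of parities, which is handled cleanly by the two-case inductive step above. Since Theorem~\ref{thm:matrix} identifies $X$ with the matrix whose top-left entry is $\llb a_1,\dots,a_n\rrb_m$, this reformulation in terms of products of $L$'s and $R$'s is a direct higher-dimensional analog of the classical identity $\Lambda(a_1)\cdots\Lambda(a_n) = R^{a_1}L^{a_2}\cdots$ recalled at the beginning of the section, and makes the $m=1$ case transparent.
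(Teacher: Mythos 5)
Your proof is correct and is exactly the argument the paper intends: the theorem is stated with an immediate \qed, relying precisely on the factorizations $\Lambda^m(a)=WL^a=R^aW$ from \eqref{eqn:lambda_LR} and the involution $W^2=I$ to telescope the product, which is what your induction carries out.
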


Our next goal is to show that the number $ \llb a_1, \dots, a_n \rrb_m $ defined in Definition~\ref{def:straight-snake} and computed in Theorem~\ref{thm:matrix} is equal to the number of $m$-dimer covers of the snake graph $\calG[a_1,\ldots,a_n]$. This will then complete the proof of Theorem~\ref{thm intro1}.
\begin {lemma} \label{lem:bracket_equivalence}
    Let $a_1,\dots,a_n$ be a sequence of positive integers.
    \begin {enumerate}
        \item[$(a)$] If $a_n > 1$, then $\llb a_1,\dots, a_n \rrb_m = \llb a_1, \dots, a_n - 1, 1 \rrb_m$.
        \item[$(b)$] If $a_1 > 1$, then $\llb a_1,\dots, a_n \rrb_m = \llb 1, a_1-1, \dots, a_n \rrb_m$.
    \end {enumerate}
\end {lemma}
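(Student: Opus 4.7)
My plan is to leverage the matrix formula from Theorem~\ref{thm:matrix}, which identifies $\llb a_1, \ldots, a_n \rrb_m$ with the top-left entry of $\Lambda^m(a_1) \cdots \Lambda^m(a_n)$. For part (a), setting $M = \Lambda^m(a_1) \cdots \Lambda^m(a_{n-1})$, the identity to prove reduces to the assertion that the top-left entries of $M \cdot \Lambda^m(a_n)$ and $M \cdot \Lambda^m(a_n-1) \Lambda^m(1)$ coincide. Since the top-left entry of a product $MA$ depends only on the first row of $M$ and the first column of $A$, it suffices to establish the stronger matrix identity
\[ \text{first column of } \Lambda^m(a) \;=\; \text{first column of } \Lambda^m(a-1) \Lambda^m(1) \qquad (a \geq 1). \]

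By Definition~\ref{def:lambda_matrix}, the $i$-th entry of the first column of $\Lambda^m(a)$ is $\mbinom{a}{m+1-i}$. Since $\mbinom{1}{k} = 1$ for every $k \geq 0$, the first column of $\Lambda^m(1)$ is the all-ones vector, so the first column of $\Lambda^m(a-1) \Lambda^m(1)$ is the vector of row-sums of $\Lambda^m(a-1)$. After dropping entries $\llb a-1 \rrb_k$ with $k < 0$ (which vanish), the $i$-th row-sum is $\sum_{k=0}^{m+1-i} \mbinom{a-1}{k}$. The hockey-stick identity
\[ \sum_{k=0}^{K} \binom{a-2+k}{a-2} = \binom{a-1+K}{a-1} \]
then shows this equals $\mbinom{a}{m+1-i}$, matching the $i$-th entry of the first column of $\Lambda^m(a)$ and completing (a).

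For (b), I plan to invoke the $180^\circ$-rotational symmetry of the straight snake graph $\mathcal{G}[1^n]$, which sends $e_i \mapsto e_{n+1-i}$ and induces a weight-preserving bijection on $m$-dimer covers. This yields $\llb a_1, \ldots, a_n \rrb_m = \llb a_n, \ldots, a_1 \rrb_m$. Applying part (a) to the reversed sequence gives $\llb a_n, \ldots, a_1 \rrb_m = \llb a_n, \ldots, a_2, a_1-1, 1 \rrb_m$, and reversing a second time produces $\llb 1, a_1-1, a_2, \ldots, a_n \rrb_m$. (Equivalently, one could mirror the argument of (a) on the left end by showing that the first rows of $\Lambda^m(a)$ and $\Lambda^m(1)\Lambda^m(a-1)$ agree, which is again the hockey-stick identity applied to column sums.)

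The only real content of the proof is the hockey-stick identity for the extended binomials $\mbinom{a}{k}$; everything else is bookkeeping with the anti-triangular structure of $\Lambda^m$ and first rows/columns of matrix products. I do not anticipate any significant obstacles.
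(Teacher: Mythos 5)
Your proof is correct, and it establishes the same key fact as the paper's proof (that the entire first column, not just the $(1,1)$-entry, is preserved --- which the paper records separately in Remark~\ref{rem:allri}), but the mechanics are genuinely different. The paper factors $\Lambda^m(a) = WL^a = R^aW$ and observes that replacing $\Lambda^m(a_n)$ by $\Lambda^m(a_n-1)\Lambda^m(1)$ multiplies $X$ on the right by $L^{-1}RW$, whose first column is $e_1$; your local identity ``first column of $\Lambda^m(a)$ equals first column of $\Lambda^m(a-1)\Lambda^m(1)$'' is exactly the statement $L^{-1}RW e_1 = e_1$ in disguise, but you prove it by a direct hockey-stick computation on the multichoose coefficients $\mbinom{a}{k}$ rather than via the $L,R,W$ decomposition. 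Your version is more self-contained (it never needs Equation~(\ref{eqn:lambda_LR}) or the lemma on the entries of $R^a$, where the paper's hockey-stick computation is effectively hidden), at the cost of not exposing the $\mathrm{SL}_{m+1}(\Bbb Z)$ structure that the paper exploits later. For part (b), your primary route --- the $180^{\circ}$ rotation of $\mathcal G[1^n]$ giving $\llb a_1,\dots,a_n\rrb_m = \llb a_n,\dots,a_1\rrb_m$ directly from Definition~\ref{def:straight-snake}, followed by part (a) and a second reversal --- is a combinatorial substitute for the paper's argument via symmetry of the $\Lambda$-matrices and transposition; your parenthetical alternative is essentially the paper's proof. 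Both of your reductions are valid and there is no circularity in invoking Theorem~\ref{thm:matrix}, since it is proved independently of this lemma.
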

\begin {proof}
    $(a)$ Let $X = \Lambda^{(m)}(a_1) \cdots \Lambda^{(m)}(a_n)$, and $X' = \Lambda^{(m)}(a_1) \cdots \Lambda^{(m)}(a_n-1)\Lambda^{(m)}(1)$. If $n$ is even, then
    $X = R^{a_1}L^{a_2} \cdots R^{a_{n-1}}L^{a_n}$, and $X' = X L^{-1}RW$. 
        Note that $L^{-1}$ is the lower triangular unipotent matrix
        with $-1$'s just below the diagonal (in the $(i,i-1)$-positions), and zeros elsewhere. 
    So the first column of $L^{-1}RW$ is $e_1$, and so
    the first columns of $X$ and $X'$ are the same. In particular, the $(1,1)$-entries are the same.

    \medskip

    If $n$ is odd, then $X = R^{a_1}L^{a_2} \cdots R^{a_n}W$, and $X' = X WR^{-1}L$, and again the first column is preserved.

    $(b)$ Let $X$ be as above, and this time $X' = \Lambda^{(m)}(1)\Lambda^{(m)}(a_1-1) \Lambda^{(m)}(a_2) \cdots \Lambda^{(m)}(a_n)$.
    Note that the $\Lambda$-matrices are symmetric, we have $X^\top=\Lambda^{(m)}(a_n) \Lambda^{(m)}(a_{n-1}) \cdots \Lambda^{(m)}(a_1)$.
    Therefore $(X')^\top = X^\top L^{-1}RW$ (if $n$ is even), which has the same first column as $X^\top$ by the argument above.
    Taking transpose, this means $X'$ and $X$ have same first row. The calculation when $n$ is odd is similar.
\end {proof}

\begin{remark} \label{rem:allri}
We note for later use that since we proved the entire first column of $X$ and of $X'$ are the same, not only do we get the equality of the $(1,1)$-entries, as explicitly stated in the $n$ even 
case for statement (a), but we in fact get equality of $X$ and $X'$ for all entries $(i,1)$ in the first column. For part (b), the same holds for the first row of the matrices.
\end{remark}

In the proof of the following theorem, we choose to use lattice paths of the dual snake graph $\mathcal{G}^*$ (rather than dimer covers of $\mathcal{G}$), 
since it makes the argument easier (e.g. Remark \ref{rm:dimer-vs-path}), even though the main objects we are studying are $m$-dimer covers.

\begin{theorem}\label{thm 1}
    \[ \llb a_1, \dots, a_n \rrb_m = \# \Omega_m[a_1,\dots,a_n] = \# \mathcal{L}^*_m[a_1,\dots,a_n] \]
\end{theorem}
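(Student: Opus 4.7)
The second equality $\#\Omega_m[a_1,\dots,a_n] = \#\mathcal{L}^*_m[a_1,\dots,a_n]$ is precisely \Cref{prop:dual_snake_bijection}, so all the real work goes into the first equality $\llb a_1,\dots,a_n\rrb_m = \#\mathcal{L}^*_m[a_1,\dots,a_n]$. The plan is to use the alternative formula recorded immediately after \Cref{prop:dual_snake_bijection},
\[ \llb a_1,\dots,a_n\rrb_m \;=\; \sum_{p\,\in\,\mathcal{L}^*_m[1^n]} \prod_{i=1}^n \mbinom{a_i}{p(e_i)}, \]
which expresses the bracket as a weighted count of $m$-lattice paths on the zigzag snake graph $\mathcal{G}^*[1^n] = \mathcal{G}[n]$, and then to promote this to the unweighted identity above by exhibiting a surjection $\pi \colon \mathcal{L}^*_m[a_1,\dots,a_n] \twoheadrightarrow \mathcal{L}^*_m[1^n]$ whose fiber over $p$ has size $\prod_i \mbinom{a_i}{p(e_i)}$.

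First I would set up the local geometry. Using the explicit description of the dual operation, $\mathcal{G}^*[a_1,\dots,a_n]$ decomposes into $n$ straight subsnake graphs $H_1,\dots,H_n$ (one for each input $a_i$) meeting at the bends of the underlying zigzag $\mathcal{G}^*[1^n]$; the piece $H_i$ is a straight subsnake graph with $a_i$ transverse edges, serving as a \emph{fattened} replacement for the $i$-th transverse edge $e_i$ of the zigzag. Given an $m$-lattice path on $\mathcal{G}^*[a_1,\dots,a_n]$, I would record for each $i$ the total multiplicity $k_i$ with which transverse edges of $H_i$ are used, and send this datum to the $m$-lattice path on $\mathcal{G}^*[1^n]$ with multiplicity $k_i$ on $e_i$. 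Applying \Cref{lem:straight_snake} locally to each $H_i$, there are exactly $\mbinom{a_i}{k_i}$ ways to realize the datum ``$k_i$ crossings in $H_i$'', and since the transverse edges of different $H_i$ are disjoint, these choices are independent across $i$; the fiber over $p$ therefore has size $\prod_i \mbinom{a_i}{k_i}$, as required.

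An alternative route, more in the matrix-product spirit of the earlier sections, is induction on $n$ using \Cref{lem:matrix-prod}. The base case $n=1$ is \Cref{lem:straight_snake}. For the inductive step one would show that decomposing each $m$-lattice path on $\mathcal{G}^*[a_1,\dots,a_n]$ according to its behavior on the last straight piece $H_n$ contributes a factor $\mbinom{a_n}{r}$ for each exit multiplicity $r \in \{0,1,\dots,m\}$, matching exactly the transfer-matrix interpretation of $\Lambda^m(a_n)$. To make this work, I would actually prove the finer statement that $\pres{i}\llb 1,a_1,\dots,a_n,1\rrb^j_m$ counts lattice paths with prescribed boundary multiplicities, so that the full recursion of \Cref{lem:matrix-prod} can be matched term by term.

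The main obstacle will be making the ``pieces'' $H_i$ precise and handling the degenerate situation $a_i = 1$, where $H_i$ collapses to a single edge shared with its neighbors, so that the casual claim of ``independence across junctions'' demands explicit verification: a lift inside $H_{i-1}$ or $H_{i+1}$ might in principle constrain the shared edge. The key point to check is that the local contribution at each junction of the zigzag is always exactly $\mbinom{a_i}{k_i}$, regardless of what is happening in the adjacent pieces. Once this case analysis is settled, the desired identity falls out from \Cref{lem:straight_snake} applied junction by junction.
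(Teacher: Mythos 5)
Your proposal follows essentially the same route as the paper: both reduce the first equality to a fiber-counting argument for a surjection from $m$-lattice paths on $\mathcal{G}^*[a_1,\dots,a_n]$ onto those of the zigzag $\mathcal{G}^*[1^n]$, with each fiber of size $\prod_i \mbinom{a_i}{k_i}$ computed locally via \Cref{lem:straight_snake} on the straight segments between bends. The junction/degenerate-case issue you flag is exactly what the paper handles, by first reducing to $a_1=a_n=1$ via \Cref{lem:bracket_equivalence} and then defining the map through the over/under pattern at the \emph{corners}, which makes the independence of the local choices automatic.
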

\begin{proof}
    The fact that $\# \Omega_m = \# \mathcal{L}^*_m$ is just Proposition \ref{prop:dual_snake_bijection}. To see this is equal to $\llb a_1,\dots,a_n \rrb_m$, 
    we will define a map $\varphi \colon \mathcal{L}^*_m[a_1,\dots,a_n] \to \mathcal{L}^*_m[1^n]$, and see that for each $p \in \mathcal{L}^*_m[1^n]$,
    the size of the fiber is precisely $\prod_i \mathrm{wt}_p(e_i)$ (i.e. the corresponding term in $\llb a_1,\dots,a_n\rrb_m$).

    \medskip
    
    First note that if $a_n>1$, then we have the equality of snake graphs $\mathcal{G}[a_1,\dots,a_n] = \mathcal{G}[a_1, a_2, \dots, a_{n-1}, a_n-1, 1]$ 
    since the sign of edge $e_d$ is independent of the snake graph.  Also, if $a_1$ is greater than 1, up to a reflection about the line $y=x$, 
    we get the equality $\mathcal{G}[a_1, a_2, \dots, a_{n-1}, a_n-1, 1] = \mathcal{G}[1, a_1-1, a_2, \dots, a_{n-1}, a_n-1, 1]$ 
    because changing the sign of $e_0$ reverses the role of adding tiles to the N and E from then on, but does not otherwise affect the snake graph.  
    Hence, $\# \Omega_m$ is the same for both $\mathcal{G}[a_1,\dots,a_n]$ and $\mathcal{G}[1, a_1-1, a_2, \dots, a_{n-1}, a_n-1, 1]$.  
    By Lemma \ref{lem:bracket_equivalence}, the quantity $\llb a_1,\dots,a_n \rrb_m$ also does not change
    under this transformation. We may therefore assume that $a_1 = a_n = 1$, which we will do for the remainder of the proof.

    \medskip

    First, we describe the map $\varphi$ in the case $m=1$. We will call a square tile of the snake graph $\mathcal{G}[a_1,\dots,a_n]$ a \emph{corner}
    if the snake changes from going to right to up (or vice versa). In other words, if the previous square is to the left and the following square is above,
    or if the previous square is below and the following square is to the right, then the square is a corner. We also consider the first and last squares corners.
    It follows from Remark~\ref{rem dual G}(b) that the set of corners of the snake graph $\mathcal{G}^*[a_1,\dots,a_n]$ 
    are naturally in bijection with the corners of the zigzag snake graph $\mathcal{G}^*[1^n]$ (here it is important that we assumed $a_1=a_n=1$, since we also count the end tiles as corners).
    For a lattice path $p \in \mathcal{L}^*_1[a_1,\dots,a_n]$, record whether $p$ goes over or under each corner. Then there is a unique lattice path
    in $\mathcal{L}^*_1[1^n]$ which has the same over/under pattern at the corners. Define this unique path to be $\varphi(p)$.
    In general, for $m > 1$, decompose $p$ into $m$ lattice paths, then apply the map above, and take the resulting multiset.

    \medskip

    Now we will count the sizes of the fibers of this map. Fix some $p \in \mathcal{L}^*_m[1^n]$. We wish to count the number of
    $\tilde{p} \in \mathcal{L}^*_m[a_1,\dots,a_n]$ such that $\varphi(\tilde{p}) = p$. Consider an edge labelled $e_i$ in the zigzag snake graph $\mathcal{G}^*[1^n]$.
    We first consider the case that $i \neq 1,n$. Suppose $e_i$ is a vertical edge (the horizontal case is analogous):
    \begin {center}
    \begin {tikzpicture}[scale=0.7]
        \draw (0,0) -- (1,0) -- (1,1) -- (2,1) -- (2,3) -- (1,3) -- (1,2) -- (0,2) -- cycle;
        \draw (0,1) -- (1,1) -- (1,2) -- (2,2);
        \draw (0.7, 1.5) node {$e_i$};
    \end {tikzpicture}
    \end {center}
    If $p$ contains $k$ copies of the edge $e_i$, then any $\tilde{p} \in \varphi^{-1}(p)$ must have $k$ of its $m$ paths go under the corner to the left of $e_i$ 
    and above the corner to the right of $e_i$. The part of $\mathcal{G}^*[a_1,\dots,a_n]$ between these corners (not including the corner tiles) 
    is a horizontal straight segment consisting of $a_i-1$ tiles. By Lemma \ref{lem:straight_snake}, there are $\mathrm{wt}_p(e_i)$ possibilities on this segment.

    \medskip

    If $i=1$ or $i=n$, then $e_i$ is a boundary edge on the first or last square. If $p$ has multiplicity $k$ on edge $e_i$, then this contributes weight $1$
    to the product, since we assumed $a_1=a_n=1$, and $\llb 1 \rrb_m = 1$. On the other hand, any pre-image $\tilde{p}$ of $p$ must have $k$ of its paths
    go under the first square and $m-k$ going over it, so there is only 1 local configuration near the first square.

    \medskip

    Taking the contributions from all segments (for each $a_i$) gives the result.
\end{proof}

\begin{proof}[Proof of Theorem \ref{thm intro1}] 
    We combine together the results of Theorem \ref{thm:matrix} and Theorem \ref{thm 1} to recover our desired equality between the number of $m$-dimer covers 
    of the snake graph $\mathcal G[a_1,a_2,\dots, a_n]$ and the top-left entry of the corresponding matrix product. 
\end{proof}

\section {Lattice of $m$-Dimer Covers and Generating Functions}
\label{sect 5}

There are two natural ways to obtain generating functions from $m$-dimer covers on snake graphs. On one hand, you can
fix a snake graph $\mathcal{G}$, and consider the sequence $\# \Omega_m(\mathcal{G})$ as $m$ varies. On the other hand,
you could fix $m$, and consider $\# \Omega_m(\mathcal{G}_k)$ for some sequence $\mathcal{G}_1, \mathcal{G}_2, \mathcal{G}_3, \dots$ of snake
graphs which follow some natural pattern.  In this section, we consider generating functions using this first approach,  
which is related to the enumeration of certain $P$-partitions
and reverse plane partitions. As we discuss in Section \ref{sec:fixing_m}, the second method has connections to number theory, in particular the ratios of the lengths of 
diagonals in regular polygons.

Recall the definition of $m$-lattice path (Section \ref{sec:mdimer}) on a snake graph. Also recall from Proposition \ref{prop:dual_snake_bijection}
that there is a bijection between $m$-dimer covers of $\mathcal{G}$ and $m$-lattice paths on $\mathcal{G}^*$. We will focus for now on
$m$-lattice paths of the dual $\mathcal{G}^*$.

Every $m$-lattice path $p$ can be seen as a labelling of the tiles of $\calG^*$, as follows. Label each tile of $\mathcal{G}^*$ with the number of
lattice paths in $p$ which go above that tile.

\begin {example} \label{eg:m-path}
    Below are pictures of a 2-lattice path and a {$3$}-lattice path on the same snake graph,
    with the corresponding labelings of the tiles.
    \begin {center}
    \begin {tikzpicture}[scale=0.8]
        \foreach \x/\y in {0/0, 0/1, 0/2, 1/2, 2/2, 2/3, 3/3} {
            \draw (\x,\y) -- (\x+1,\y) -- (\x+1,\y+1) -- (\x,\y+1) -- cycle;
        }

        \draw (0.5,0.5) node {2};
        \draw (0.5,1.5) node {1};
        \draw (0.5,2.5) node {0};
        \draw (1.5,2.5) node {1};
        \draw (2.5,2.5) node {1};
        \draw (2.5,3.5) node {1};
        \draw (3.5,3.5) node {2};

        \draw[blue, line width=1.5] (0,0) -- (0,1);
        \draw[blue, line width=1.5] (-0.1,0) -- (-0.1,1);
        \draw[blue, line width=1.5] (0,1) -- (1,1) -- (1,2);
        \draw[blue, line width=1.5] (0,1) -- (0,2) -- (1,2);
        \draw[blue, line width=1.5] (1,2) -- (3,2) -- (3,4) -- (4,4);
        \draw[blue, line width=1.5] (1,2) -- (1,3) -- (2,3) -- (2,4) -- (3,4);
        \draw[blue, line width=1.5] (3,4.1) -- (4,4.1);

        \begin {scope}[shift={(6,0)}]
            \foreach \x/\y in {0/0, 0/1, 1/1, 1/2, 2/2, 2/3, 3/3} {
                \draw (\x,\y) -- (\x+1,\y) -- (\x+1,\y+1) -- (\x,\y+1) -- cycle;
            }

            \draw[blue, line width=1.5] (0,0) -- (1,0) -- (1,1) -- (2,1) -- (2,4) -- (4,4);
            \draw[blue, line width=1.5] (0,0) -- (0,2) -- (3,2) -- (3,4);
            \draw[blue, line width=1.5] (3,4.1) -- (4,4.1);
            \draw[blue, line width=1.5] (-0.1,0) -- (-0.1,2);
            \draw[blue, line width=1.5] (0,2.1) -- (1.9,2.1);
            \draw[blue, line width=1.5] (1.9,2.1) -- (1.9,3) -- (2.9,3) -- (2.9,4);
            \draw[blue, line width=1.5] (3,4.2) -- (4,4.2);

            \draw (0.5,0.5) node {2};
            \draw (0.5,1.5) node {2};
            \draw (1.5,1.5) node {2};
            \draw (1.5,2.5) node {0};
            \draw (2.5,2.5) node {2};
            \draw (2.5,3.5) node {1};
            \draw (3.5,3.5) node {3};
        \end {scope}
    \end {tikzpicture}
    \end {center}
\end {example}

Note that because the lattice paths must go north-east, the labels in the tiles weakly increase going down the columns and
across the rows (left-to-right). Such labelings are equivalent to the notions of \emph{reverse plane partitions} and \emph{$P$-partitions},
which we will now define.

\begin {definition}
    Let $\lambda$ be a Young diagram (or a skew Young diagram). A \emph{reverse plane partition} of shape $\lambda$
    is a filling of the tiles of $\lambda$ with non-negative integers which are weakly increasing in rows (left-to-right) and columns (top-to-bottom).
\end {definition}

\begin {remark}
    A snake graph is the same thing as a \emph{border strip},
    which is a connected skew Young diagram $\lambda / \mu$ which contains no $2 \times 2$ square.
\end {remark}

\begin {prop} \label{prop:snakes_and_rpps}
    Let $\mathcal{G}$ be a snake graph. There is a bijection between $\mathcal{L}_m(\mathcal{G})$ (the set of $m$-lattice paths on $\mathcal{G}$) 
    and the set of reverse plane partitions of shape $\mathcal{G}$ with all parts less than or equal to $m$.
    Under this bijection the area under the $m$-lattice path is equal to the size of
    the reverse plane partition (the sum of its parts).
\end {prop}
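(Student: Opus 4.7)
The plan is to build an explicit bijection $\Phi$ by counting, for each tile of $\mathcal{G}$, how many of the $m$ constituent lattice paths have that tile on their southeast side; the area-size correspondence will then follow from a routine double count. Concretely, I would fix any multipath representative $\{\pi_1, \ldots, \pi_m\}$ of the given $m$-lattice path, observe that since $\mathcal{G}$ is a simply connected planar region, each monotone path $\pi_i$ from the SW to the NE corner partitions the tiles of $\mathcal{G}$ into a northwest-side set $\mathrm{NW}(\pi_i)$ and a southeast-side set $\mathrm{SE}(\pi_i)$, and set
\[ P(c) := \#\{\, i : c \in \mathrm{SE}(\pi_i) \,\}, \qquad c \in \mathcal{G}. \]

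The crux of the argument is the following local identity: for adjacent tiles $c_1$ and $c_2$ sharing an interior edge $e$, with $c_2$ lying immediately east or south of $c_1$, the difference $P(c_2) - P(c_1)$ equals the multiplicity $\mu(e)$ of $e$ in the edge multiset of the multipath, since $\pi_i$ traverses $e$ exactly when $c_1 \in \mathrm{NW}(\pi_i)$ and $c_2 \in \mathrm{SE}(\pi_i)$. This one identity does three jobs at once: (i) it yields the RPP monotonicity $P(c_1) \leq P(c_2)$ required for weakly increasing rows (L-to-R) and columns (T-to-B); (ii) combined with the observation that $P$ at the SW-most tile equals the multiplicity of its west boundary edge (by conservation of flow at the SW corner), it shows that $P$ is determined entirely by the edge multiset of the multipath and hence descends to a well-defined map on the equivalence classes that form $\mathcal{L}_m(\mathcal{G})$; and (iii) it recovers every interior edge multiplicity from $P$, which is the key input for injectivity. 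I expect the most delicate step to be extending (iii) to all boundary edges, which I would handle by propagating multiplicities outward from the SW corner along the boundary using vertex conservation and the monotone nature of each $\pi_i$.

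For surjectivity, I would construct the inverse: given an RPP $P$ with values in $\{0,1,\ldots,m\}$, the sets $S_k := \{c : P(c) \geq k\}$ for $k = 1,\ldots,m$ are upper sets in the tile poset of the border strip $\mathcal{G}$ (where $c \preceq c'$ when $c'$ is weakly SE of $c$), so each $S_k$ is bounded by a unique monotone lattice path $\pi_k$ from SW to NE with $\mathrm{SE}(\pi_k) = S_k$. Applying $\Phi$ to $\{\pi_1, \ldots, \pi_m\}$ then returns $P$ by construction. Finally, defining the area of an $m$-multipath as $\sum_i |\mathrm{SE}(\pi_i)|$ (which is invariant on equivalence classes, as the identity below will confirm), the double count
\[ \sum_{i=1}^m |\mathrm{SE}(\pi_i)| \;=\; \sum_{c \in \mathcal{G}} \#\{\, i : c \in \mathrm{SE}(\pi_i) \,\} \;=\; \sum_{c \in \mathcal{G}} P(c) \]
identifies the area of the $m$-lattice path with the size of the RPP $P$, completing the proof.
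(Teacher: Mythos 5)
Your construction is exactly the paper's: the function $P(c)=\#\{i: c\in\mathrm{SE}(\pi_i)\}$ is precisely the paper's labelling of each tile by the number of constituent paths passing above it, and the monotonicity argument is the same. Your write-up is correct and simply more thorough, supplying the well-definedness on equivalence classes, the injectivity/surjectivity via the level sets $S_k$, and the area--size double count that the paper's two-sentence proof leaves implicit.
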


\begin {definition} \cite{stanley_72}
    Given a poset $P$ of size $|P| = p$, a \emph{$P$-partition} is a map $\sigma \colon P \to \Bbb{N}$ which is order-reversing.
    That is, if $x < y$ in $P$, then $\sigma(x) \geq \sigma(y)$.
\end {definition}

\begin {remark}\label{rem 55}
    A snake graph $\mathcal{G}$ (thought of as a skew Young diagram) is naturally thought of as a poset $(P(\mathcal{G}),<)$, where the underlying set $P(\mathcal{G})$ is the set of
    tiles of the diagram. The partial order is given by $x < y$ if tile $x$ lies below or to the right of tile $y$. The posets of the form $P(\mathcal{G})$ are also known as \emph{fence posets}.
    Proposition \ref{prop:snakes_and_rpps} equivalently gives a bijection between $m$-lattice paths on $\mathcal{G}$ and $P$-partitions (with parts at most $m$) on the associated
    fence poset.
\end {remark}

\begin{definition}\label{def:quasi}
Define a family of polynomials in the spirit of Schur polynomials:
\[\mathcal{F}_{\calG^*}(x_0,x_1,\dots,x_m):=\sum_{p\in \mathcal{L}^*_m(\calG)}\prod _{i\in[m]} x_i^{\# \text{ of tiles with label }i\text{ in }p}.\]
\end{definition}
For example, the two lattice paths in Example \ref{eg:m-path} will contribute the monomials $x_0 x_1^4 x_2^2$ and $x_0 x_1 x_2^4 x_3$ respectively.

\begin{remark}
    The polynomials $\mathcal{F}_{\calG^*}(x_0,x_1,\dots,x_m)$ are quasi-symmetric, 
    as they are special cases of Lam-Pylyavskyy's \cite{lp_08} \emph{wave Schur polynomials} of ribbon shape.  
    By extending our sum to run over $m$-lattice paths of all $m\in\mathbb{N}$, we are able to apply Theorem 6.6 of \cite{lp_08} 
    (which gives Jacobi-Trudi formulae for general wave Schur functions) thus providing a determinantal formula for enumerating $m$-dimer covers or $m$-lattice paths. 
\end{remark}

We now review some classical results in the theory of $P$-partitions, so that we may ultimately connect them back to 
the enumeration of $m$-dimer covers.

In \cite{stanley_72}, Stanley gave formulas for the generating functions associated to $P$-partitions. 
Let $U_{P;m}(q)$ denote the generating function $\sum q^{|\sigma|}$ where $|\sigma|$ is the sum of the parts/labels,
and the sum is over all $P$-partitions with all parts less than or equal to $m$. 

In order to state Stanley's results, we need a little notation.

Let $L(P)$ be the set of linear extensions of $P$. That is, order-preserving bijections $P \to [p]=\{1,2,\ldots,p\}$. Fixing some
linear extension $\pi$, any other linear extension $\sigma$ is associated to a permutation $\pi \circ \sigma^{-1} \in S_p$.
The \emph{major index} of a permutation, denoted $\mathrm{maj}(\sigma)$, is the sum of the positions of the descents.
Let $W_i(q)$ be the major-index generating function 
\[ W_i(q) = \sum q^{\mathrm{maj}(\sigma)} \]
where $\mathrm{maj}(\sigma)$ really means the major index of the permutation $\pi \circ \sigma^{-1}$, and the sum is over
all linear extensions with exactly $i$ descents. Stanley gave the following formula for the generating function $U_{P;m}(q)$.

\begin {prop} [\cite{stanley_72}, Proposition 8.2] \label{prop:stanley_Um}
    Let $P$ be a poset of size $p$. The generating function $U_{P;m}(q)$ is given by
    \[ U_{P;m}(q) = \sum_{i=0}^{p-1} \binom{p+m-i}{p}_q W_i(q) \]
    where $\binom{n}{k}_q$ are the Gaussian $q$-binomial coefficients.
\end {prop}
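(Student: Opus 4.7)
The plan is to apply the fundamental theorem of $P$-partitions, decomposing $U_{P;m}(q)$ as a sum indexed by linear extensions of $P$. First I would fix the reference linear extension $\pi$ used to define $W_i(q)$, and for any $\sigma \in L(P)$ write $w_\sigma := \sigma \circ \pi^{-1} \in S_p$. The fundamental theorem then furnishes a bijection between $P$-partitions $f \colon P \to \{0,1,\ldots,m\}$ and pairs $(\sigma,(a_1,\ldots,a_p))$ where $\sigma \in L(P)$ and $m \geq a_1 \geq a_2 \geq \cdots \geq a_p \geq 0$ is a weakly decreasing sequence in which $a_j > a_{j+1}$ holds precisely at the descents of $w_\sigma$. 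Under this bijection, the statistic $|f| = \sum_{x \in P} f(x)$ translates to $a_1 + \cdots + a_p$.

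For each $\sigma$ with descent set $D = \{d_1 < \cdots < d_i\} \subseteq [p-1]$, I would then perform the substitution
\[ b_j \;:=\; a_j - \#\{\,k : d_k \geq j\,\}, \qquad 1 \leq j \leq p. \]
A direct check shows that the strict inequalities at descents become weak ones, producing a weakly decreasing sequence $m - i \geq b_1 \geq \cdots \geq b_p \geq 0$, while the total sum shifts by exactly $\sum_k d_k = \mathrm{maj}(w_\sigma)$. Consequently the $\sigma$-contribution to $U_{P;m}(q)$ equals
\[ q^{\mathrm{maj}(w_\sigma)} \sum_{m-i \geq b_1 \geq \cdots \geq b_p \geq 0} q^{\,b_1 + \cdots + b_p} \;=\; q^{\mathrm{maj}(w_\sigma)} \binom{m+p-i}{p}_{q}, \]
where the last equality is the standard enumeration of weakly decreasing sequences of length $p$ with entries in $\{0,\ldots,m-i\}$ by sum, realized as a Gaussian $q$-binomial.

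Finally, I would regroup the sum over $L(P)$ according to the number of descents of $w_\sigma$: linear extensions with exactly $i$ descents jointly contribute $\binom{m+p-i}{p}_q W_i(q)$ by the definition of $W_i$, and summing over $i = 0, 1, \ldots, p-1$ produces the claimed identity. The only genuine subtlety is setting up the fundamental theorem with the signs correct against the paper's order-reversing convention on $P$-partitions — i.e., verifying that \emph{descents} (rather than ascents) of $w_\sigma$ mark the strict inequalities in the decreasing sequence; once that bijection is pinned down, the rest is a routine change of variables followed by a regrouping.
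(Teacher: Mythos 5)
The paper does not actually prove this proposition --- it is quoted directly from Stanley's memoir --- so there is no internal argument to compare against; your proof is the standard one for that result and is correct. The decomposition via the fundamental theorem of $P$-partitions, the change of variables $b_j = a_j - \#\{k : d_k \geq j\}$ producing the major-index shift, and the identification of the box-bounded weakly decreasing sequences with $\binom{p+m-i}{p}_q$ all check out, and you are right that the only delicate point is pinning the descent convention (the paper's $\sigma\circ\pi^{-1}$ versus the more common $\pi\circ\sigma^{-1}$) against the definition of $W_i(q)$.
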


Stanley also gave a formula for the multivariate generating function $\sum_m U_{P;m}(q) x^m$.

\begin {theorem} [\cite{stanley_72}, Proposition 8.3] \label{thm:stanley_gen_fn}
    Let $P$ be a poset of size $p$. Then
    \[ \sum_{m=0}^\infty U_{P;m}(q) x^m = \frac{1}{(x;q)_{p+1}} \sum_{\sigma \in L(P)} q^{\mathrm{maj}(\sigma)} x^{\mathrm{des}(\sigma)} \]
    where $(x;q)_k$ is the $q$-Pochhammer symbol
    \[ (x;q)_k := (1-x)(1-qx) \cdots (1-q^{k-1}x). \]
\end {theorem}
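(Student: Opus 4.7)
The plan is to deduce \Cref{thm:stanley_gen_fn} from \Cref{prop:stanley_Um} in the previous paragraph by a simple exchange of summation together with a standard $q$-series identity. Multiplying the formula $U_{P;m}(q) = \sum_{i=0}^{p-1} \binom{p+m-i}{p}_q W_i(q)$ by $x^m$ and summing over $m \geq 0$ gives
\[ \sum_{m \geq 0} U_{P;m}(q) x^m = \sum_{i=0}^{p-1} W_i(q) \sum_{m \geq 0} \binom{p+m-i}{p}_q x^m. \]
Since the Gaussian $q$-binomial $\binom{p+m-i}{p}_q$ vanishes for $m < i$, the inner sum reduces, upon reindexing $n = m - i$, to $x^i \sum_{n \geq 0} \binom{p+n}{p}_q x^n$.

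Next I would invoke the well-known $q$-binomial identity
\[ \sum_{n \geq 0} \binom{p+n}{p}_q x^n \;=\; \frac{1}{(x;q)_{p+1}}, \]
a $q$-analogue of $\sum_n \binom{p+n}{p} x^n = (1-x)^{-(p+1)}$, which admits a short inductive proof using the recurrence $\binom{p+n}{p}_q = \binom{p+n-1}{p}_q + q^{n}\binom{p+n-1}{p-1}_q$, or a combinatorial proof recognizing both sides as generating functions for partitions with at most $p$ parts weighted by size. Substituting this into the display above yields
\[ \sum_{m \geq 0} U_{P;m}(q) x^m = \frac{1}{(x;q)_{p+1}} \sum_{i=0}^{p-1} W_i(q) x^i, \]
and by the definition $W_i(q) = \sum_{\sigma \in L(P),\, \mathrm{des}(\sigma) = i} q^{\mathrm{maj}(\sigma)}$, the remaining sum is precisely $\sum_{\sigma \in L(P)} q^{\mathrm{maj}(\sigma)} x^{\mathrm{des}(\sigma)}$, which is the desired formula.

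Because the combinatorial heart of the argument is already encoded in \Cref{prop:stanley_Um}, there is no substantial obstacle in this approach beyond the $q$-binomial identity above. As an alternative route that avoids quoting \Cref{prop:stanley_Um} entirely, one could swap the order of summation to write $\sum_m U_{P;m}(q) x^m = \frac{1}{1-x}\sum_\sigma q^{|\sigma|} x^{\max \sigma}$ (now over all unrestricted $P$-partitions), invoke the fundamental theorem of $P$-partitions to decompose this as a disjoint union indexed by linear extensions $\pi \in L(P)$, clear the strict inequalities at descents of $\pi$ via the shift $d_i := c_i - \#\{j \in \mathrm{Des}(\pi) : j \geq i\}$ (contributing $q^{\mathrm{maj}(\pi)} x^{\mathrm{des}(\pi)}$), and then evaluate the residual sum $\sum_{d_1 \geq \cdots \geq d_p \geq 0} q^{|d|} x^{d_1}$ using the bijection $e_i = d_i - d_{i+1}$ with $e_p = d_p$, yielding $\prod_{i=1}^p (1-q^i x)^{-1}$ and hence the same conclusion. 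In that alternative, the fundamental theorem of $P$-partitions would be the real work, but since \Cref{prop:stanley_Um} has been stated, the shortest path is the manipulation above.
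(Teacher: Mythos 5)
Your derivation is correct, but note that the paper does not prove this statement at all: it is quoted verbatim as Proposition 8.3 of Stanley's 1972 memoir, just as \Cref{prop:stanley_Um} is quoted as his Proposition 8.2, so there is no internal proof to compare against. Your main argument -- multiply \Cref{prop:stanley_Um} by $x^m$, sum over $m$, use the vanishing of $\binom{p+m-i}{p}_q$ for $m<i$ to reindex, and apply the $q$-binomial series $\sum_{n\ge 0}\binom{p+n}{p}_q x^n = 1/(x;q)_{p+1}$ -- is a clean and complete deduction of Proposition 8.3 from Proposition 8.2, and every step checks out (the range $0\le i\le p-1$ does exhaust all possible descent numbers of permutations of $[p]$, so $\sum_i W_i(q)x^i$ really is $\sum_{\sigma\in L(P)}q^{\mathrm{maj}(\sigma)}x^{\mathrm{des}(\sigma)}$). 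Your alternative sketch via the fundamental theorem of $P$-partitions is essentially Stanley's original route and is also sound; the identity $\sum_m U_{P;m}(q)x^m=\frac{1}{1-x}\sum_\sigma q^{|\sigma|}x^{\max\sigma}$ and the descent-clearing substitution are exactly the standard mechanism. The only caveat is circularity of sources: in Stanley's memoir the two propositions are proved from the same $P$-partition decomposition rather than one from the other, so if one were being pedantic about logical dependencies your first argument proves ``8.2 implies 8.3'' rather than giving an independent proof -- but as a verification that the two cited statements are mutually consistent, and as a self-contained proof given that the paper already asserts \Cref{prop:stanley_Um}, it is perfectly valid.
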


Finally, we discuss the application of this formula to the problem of counting $m$-dimer covers on a snake graph $\mathcal{G}$, or equivalently
$m$-lattice paths on $\mathcal{G}^*$. Recall that $P(\mathcal{G})$ is the poset of the snake graph $\mathcal{G}$ introduced in Remark~\ref{rem 55}.

\begin{remark}
The poset of $\Omega_m(\calG)$ or $\mathcal{L}_m(\calG^*)$ is always a distributive lattice, see \cite{propp02}. For $m=1$, it is well known that this lattice is isomorphic to the order ideals of  the poset $P(\calG^*)$ (see \cite{claussen20}). In general, the poset of $\Omega_m(\calG)$ is isomorphic to the lattice of order ideals of the Cartesian product $P(\calG^*)\times [m]$, where $[m]$ is the $m$-element chain. The case of $m=2$ of this latter result is proved in Section 9 of \cite{moz2} and the statement can be easily generalized to arbitrary $m$.
\end{remark}

By Proposition \ref{prop:snakes_and_rpps}, there is a weight-preserving bijection between the set of $P$-partitions on $\mathcal{G}^*$ whose parts are at most $m$
and $\mathcal{L}_m(\mathcal{G}^*)$. Therefore $U_{P(\mathcal{G^*});m}(q)$ is the rank generating function for the poset structure on $\mathcal{L}_m(\mathcal{G^*})$,
or equivalently on $\Omega_m(\mathcal{G})$.

Applying Theorem \ref{thm:stanley_gen_fn} in this special case gives the following.

\begin {corollary} \label{cor:dimer_q_gen_fn}
    Let $\mathcal{G} = \mathcal{G}[a_1,\dots,a_n]$, and let $N := \sum_{i=1}^n a_i$. 
    The generating function for $m$-lattice paths on $\mathcal{G}^*$, or $m$-dimer covers on $\mathcal{G}$, is given by
    \[ F_{\calG^*}(q,x) = \sum_{m=0}^\infty U_{P(\calG^*);m}(q)x^m=\frac{1}{(x;q)_N} \sum_{\sigma \in L(P(\mathcal{G}^*))} q^{\mathrm{maj}(\sigma)} x^{\mathrm{des}(\sigma)} \]
\end {corollary}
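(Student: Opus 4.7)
The statement follows almost immediately from \Cref{thm:stanley_gen_fn} once the correct identifications are made, most of which are already established in the discussion preceding the corollary. The plan is to chain together three bijections, match the rank statistic with the size statistic on $P$-partitions, compute $|P(\calG^*)|$, and then invoke Stanley's theorem directly.

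First I would assemble the chain of bijections: $\Omega_m(\calG) \leftrightarrow \mathcal{L}^*_m[a_1,\dots,a_n]$ from \Cref{prop:dual_snake_bijection}; then $\mathcal{L}^*_m[a_1,\dots,a_n] \leftrightarrow \{\text{RPPs of shape } \calG^* \text{ with parts at most } m\}$ from \Cref{prop:snakes_and_rpps}; and finally a tautological identification of such a reverse plane partition with a $P$-partition $\sigma \colon P(\calG^*) \to \{0,1,\dots,m\}$ via \Cref{rem 55}, where under the convention that $x < y$ in $P(\calG^*)$ iff tile $x$ lies below or to the right of tile $y$, the weak-increase condition of the RPP is exactly the order-reversing condition of a $P$-partition.

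Next, I would verify that the rank statistic on $\Omega_m(\calG)$ corresponds to the size statistic $|\sigma|$ on $P$-partitions. By the remark immediately preceding this corollary, $\Omega_m(\calG)$ is isomorphic, as a distributive lattice, to the lattice of order ideals of $P(\calG^*) \times [m]$, and an order ideal $J$ corresponds to the $P$-partition $\sigma(x) = \#\{k : (x,k) \in J\}$, satisfying $|J| = |\sigma|$. Combined with the area$\,=\,|\sigma|$ equality from \Cref{prop:snakes_and_rpps}, this identifies the inner sum in the definition of $F_{\calG^*}(q,x)$ with $U_{P(\calG^*);m}(q)$, and thus $F_{\calG^*}(q,x) = \sum_{m \ge 0} U_{P(\calG^*);m}(q)\,x^m$.

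Finally, to apply \Cref{thm:stanley_gen_fn} I only need $|P(\calG^*)|$. By the remark in Section~2.3 on duality, the snake graph $\calG[a_1,\dots,a_n]$ and its dual $\calG^*[a_1,\dots,a_n]$ have the same number of tiles, which is $d = a_1 + \cdots + a_n - 1 = N - 1$. Consequently $|P(\calG^*)| = N-1$, so $(x;q)_{|P(\calG^*)|+1} = (x;q)_N$, and \Cref{thm:stanley_gen_fn} yields the claimed formula. The main obstacle in this plan is confirming that the rank function on $\Omega_m(\calG)$ pulls back to $|\sigma|$ through the chain of bijections; once that identification is in hand, everything else is bookkeeping and a direct invocation of Stanley's theorem.
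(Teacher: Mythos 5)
Your proposal is correct and follows essentially the same route as the paper, which likewise obtains the corollary by chaining the bijections of \Cref{prop:dual_snake_bijection} and \Cref{prop:snakes_and_rpps} to identify $U_{P(\calG^*);m}(q)$ with the rank generating function of $\Omega_m(\calG)$ and then invoking \Cref{thm:stanley_gen_fn} with $|P(\calG^*)| = N-1$. Your additional verification that the rank statistic pulls back to $|\sigma|$ via the order-ideal description is a reasonable filling-in of a step the paper leaves implicit.
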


In particular, when $q=1$, we get the generating function for {the number of} $m$-dimer covers on $\mathcal{G}$ (or $m$-lattice paths on $\mathcal{G}^*$):

\begin {corollary} \label{cor:dimer_gen_fn}
    Let $\mathcal{G} = \mathcal{G}[a_1,\dots,a_n]$, and let $N := \sum_{i=1}^n a_i$. Then
    \[F(x)= F_{\calG^*}(1,x) = \frac{1}{(1-x)^N} \sum_{\sigma \in L(P(\mathcal{G}^*))} x^{\mathrm{des}(\sigma)} \]
\end {corollary}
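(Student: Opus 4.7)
The plan is to obtain this as a direct specialization of Corollary~\ref{cor:dimer_q_gen_fn} at $q=1$; I do not expect to need any new combinatorics. The reduction has two pieces---verifying the first equality $F(x) = F_{\calG^*}(1,x)$, and computing the $q \to 1$ limit of the right-hand side of Corollary~\ref{cor:dimer_q_gen_fn}---both of which amount to chasing definitions and evaluating a $q$-Pochhammer symbol.

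First I would verify $F(x) = F_{\calG^*}(1,x)$. By definition, $F_{\calG^*}(1,x) = \sum_{m\ge 0} U_{P(\calG^*);m}(1)\, x^m$, and $U_{P(\calG^*);m}(1)$ simply counts the $P(\calG^*)$-partitions (equivalently, the reverse plane partitions of shape $\calG^*$) with all parts $\leq m$, since setting $q=1$ collapses the rank-generating function to a cardinality. By Proposition~\ref{prop:snakes_and_rpps} this count equals $|\mathcal{L}_m(\calG^*)|$, and by Proposition~\ref{prop:dual_snake_bijection} this in turn equals $|\Omega_m(\calG)|$. Hence term-by-term in $m$ the series $F_{\calG^*}(1,x)$ coincides with the original generating function $F(x) = \sum_m |\Omega_m(\calG)|\,x^m$.

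Next I would specialize the right-hand side of Corollary~\ref{cor:dimer_q_gen_fn} at $q=1$. The $q$-Pochhammer symbol
\[ (x;q)_N = (1-x)(1-qx)(1-q^2 x)\cdots(1-q^{N-1}x) \]
evaluates to $(1-x)^N$ when $q=1$, and $q^{\mathrm{maj}(\sigma)} \to 1$ for every linear extension $\sigma \in L(P(\calG^*))$, so the inner sum collapses to $\sum_\sigma x^{\mathrm{des}(\sigma)}$. Substituting these two specializations into Corollary~\ref{cor:dimer_q_gen_fn} yields exactly the claimed identity. The only minor sanity check is that the $q\to 1$ specialization of the rational function $\frac{1}{(x;q)_N}\sum_\sigma q^{\mathrm{maj}(\sigma)}x^{\mathrm{des}(\sigma)}$ is well-defined; this is automatic since $(x;q)_N$ is a polynomial in $q$ whose value at $q=1$ is $(1-x)^N$, an invertible formal power series in $x$.

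There is essentially no hard step here: Corollary~\ref{cor:dimer_q_gen_fn} has already packaged Stanley's $P$-partition machinery together with the identification of $m$-lattice paths on $\calG^*$ as bounded reverse plane partitions of shape $\calG^*$, and the $q=1$ evaluation is routine. As an alternative route, one could instead specialize Proposition~\ref{prop:stanley_Um} at $q=1$ (replacing the Gaussian $q$-binomial $\binom{p+m-i}{p}_q$ with the ordinary binomial $\binom{p+m-i}{p}$ and $W_i(q)$ with the Eulerian count of linear extensions with $i$ descents) and sum over $m$; either path delivers the same identity immediately.
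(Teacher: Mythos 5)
Your proposal is correct and follows exactly the paper's route: the paper derives \Cref{cor:dimer_gen_fn} as the immediate $q=1$ specialization of \Cref{cor:dimer_q_gen_fn}, with $(x;q)_N\big|_{q=1}=(1-x)^N$ and the maj-statistic collapsing, and with the identification $F(x)=F_{\calG^*}(1,x)$ coming from \Cref{prop:snakes_and_rpps} and \Cref{prop:dual_snake_bijection} just as you describe. No gaps; your extra remarks about well-definedness and the alternative via \Cref{prop:stanley_Um} are fine but not needed.
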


\begin{remark}
	{The rank generating function $U_{P(\calG^*);m}(q)$ is} also the principal specialization of the quasi-symmetric polynomial defined in 
	Definition \ref{def:quasi}, that is $U_{P(\calG^*);m}(q) = \mathcal{F}_{\calG^*}(1,q,q^2,\dots,q^m)$. 
	{Furthermore, $\lim_{m \to \infty} U_{P(\calG^*);m}(q)$ yields a power series (which is a quasi-symmetric function) that is the principal specialization $\mathcal{F}_{\calG^*}(1,q,q^2,\dots)$, and there is a combinatorial formula given by Morales-Pak-Panova \cite[Theorem 1.5] {mpp_18}.}
\end{remark}

\begin {example}
    Consider the snake graph which is a vertical column of $n$ tiles. The corresponding poset is the chain with vertices labeled $1,2,\dots,n$ from bottom-to-top.
    There is only one linear extension, given by the identity permutation $123 \cdots n$ (which has no descents). In this case $N = n+1$, and so the right-hand side of
    Corollary \ref{cor:dimer_gen_fn} is $\frac{1}{(1-x)^{n+1}}$. By the binomial theorem, this is equal to
    \[ \frac{1}{(1-x)^{n+1}} = \sum_{m \geq 0} \binom{n+m}{m} x^m \]
    This agrees with Lemma \ref{lem:straight_snake}, which says that there are $\binom{n+m}{m}$ $m$-lattice paths on this snake graph. 
    Equivalently, there are $\binom{n+m}{m}$ $m$-dimer covers
    on the zigzag snake graph with $n$ tiles. The $q$-version (Corollary \ref{cor:dimer_q_gen_fn}) says that the generating function is $\frac{1}{(x;q)_{n+1}}$, 
    which by the $q$-binomial theorem, is
    \[ \frac{1}{(x;q)_{n+1}} = \frac{1}{(1-x)(1-qx)(1-q^2x) \cdots (1-q^nx)} = \sum_{m \geq 0} \binom{n+m}{m}_q x^m \]
    This means the rank generating function for $m$-dimer covers on the zigzag $\mathcal{G}$ is the $q$-binomial coefficient
    \[ \sum_{M \in \Omega_m(\mathcal{G})} q^{|M|} = \binom{n+m}{m}_q \]
    This also just follows directly from Proposition \ref{prop:stanley_Um} and the fact that $W_0(q) =1$ and $W_i(q) = 0$  when $i \geq 1$ in this case.
    For example, when $n=3$ and $m=2$, we have $\binom{5}{2}_q = 1 + q + 2q + 2q^2 + 2q^3 + q^4 + q^5$. This is the rank function of the poset of double-dimer covers 
    on the zigzag snake graph with 3 tiles, as in Figure \ref{fig:double_dimer_poset}.
    \begin {figure}[h!]
    \centering
    \begin{tikzpicture}[scale=0.45]
    
    \foreach \x/\y in {-3/0,0/3,3/6,-3/6,0/9,6/9,3/12,-3/12,0/15,-3/18} {
    \draw [line width = 0.4] (1+\x,0+\y) --(1+\x,1+\y) -- (\x+0,1+\y) -- (0+\x,0+\y) -- (2+\x,0+\y) -- (2+\x,2+\y) -- (1+\x,2+\y) -- (1+\x,1+\y) -- (2+\x,1+\y);
    }
     
    \begin{scope}[shift={(-3,0)}]
    \draw [red,double,double distance = 0.15 em, line width = 2] (0,0) -- (1,0);
    \draw [red,double,double distance = 0.15 em, line width = 2] (0,1) -- (1,1);
    \draw [red,double,double distance = 0.15 em, line width = 2] (2,0) -- (2,1);
    \draw [red,double,double distance = 0.15 em, line width = 2] (1,2) -- (2,2);
    \end{scope}
    
     \begin{scope}[shift={(-3,6)}]
    \draw [red,double,double distance = 0.15 em, line width = 2] (0,0) -- (0,1);
    \draw [red,double,double distance = 0.15 em, line width = 2] (1,0) -- (1,1);
    \draw [red,double,double distance = 0.15 em, line width = 2] (2,0) -- (2,1);
    \draw [red,double,double distance = 0.15 em, line width = 2] (1,2) -- (2,2);
    \end{scope}
    
    \begin{scope}[shift={(-3,12)}]
    \draw [red,double,double distance = 0.15 em, line width = 2] (0,0) -- (0,1);
    \draw [red,double,double distance = 0.15 em, line width = 2] (1,0) -- (2,0);
    \draw [red,double,double distance = 0.15 em, line width = 2] (2,1) -- (1,1);
    \draw [red,double,double distance = 0.15 em, line width = 2] (1,2) -- (2,2);
    \end{scope}
    
    \begin{scope}[shift={(-3,18)}]
    \draw [red,double,double distance = 0.15 em, line width = 2] (0,0) -- (0,1);
    \draw [red,double,double distance = 0.15 em, line width = 2] (1,0) -- (2,0);
    \draw [red,double,double distance = 0.15 em, line width = 2] (2,1) -- (2,2);
    \draw [red,double,double distance = 0.15 em, line width = 2] (1,2) -- (1,1);
    \end{scope}
    
    \begin{scope}[shift={(0,3)}]

    \draw [red, double,double distance = 0.15 em, line width = 2] (2,0) -- (2,1);
    \draw [red,double,double distance = 0.15 em, line width = 2] (1,2) -- (2,2);
    \draw [red,line width = 2] (0,0) -- (0,1) -- (1,1) -- (1,0) -- cycle;
    \end{scope}
    \begin{scope}[shift={(0,9)}]
    \draw [red,double,double distance = 0.15 em, line width = 2] (0,0) -- (0,1);

    \draw [red,double,double distance = 0.15 em, line width = 2] (1,2) -- (2,2);
    \draw [red,line width = 2] (1,0) -- (2,0) -- (2,1) -- (1,1) -- cycle;
    \end{scope}
    
    \begin{scope}[shift={(0,15)}]
    \draw [red,double,double distance = 0.15 em, line width = 2] (0,0) -- (0,1);
    \draw [red,double,double distance = 0.15 em, line width = 2] (1,0) -- (2,0);

    \draw [red,line width = 2] (1,1) -- (2,1) -- (2,2) -- (1,2) -- cycle;
    \end{scope}
     \begin{scope}[shift={(3,6)}]

    \draw [red,double,double distance = 0.15 em, line width = 2] (1,2) -- (2,2);
    \draw [red,line width = 2] (0,0) -- (0,1) -- (2,1) -- (2,0) -- cycle;
    \end{scope}
    
    \begin{scope}[shift={(3,12)}]
    \draw [red,double,double distance = 0.15 em, line width = 2] (0,0) -- (0,1);
    \draw [red,line width = 2] (1,0) -- (2,0) -- (2,2) -- (1,2) -- cycle;
    \end{scope}
    
    \begin{scope}[shift={(6,9)}]
    \draw [red,line width = 2] (0,0) -- (2,0) -- (2,2) -- (1,2) -- (1,1) -- (0,1) -- cycle;
    \end{scope}
      
     \draw (-0.5,1.5) -- (0.5,2.5);
     \draw (-0.5,5.5) -- (0.5,4.5);
     \foreach \j/\i in {0/6,0/12,3/9,3/3,6/6}{\draw (-0.5+\j,1.5+\i) -- (0.5+\j,2.5+\i);
     \draw (-0.5+\j,5.5+\i) -- (0.5+\j,4.5+\i);}
    \end{tikzpicture}    
    \caption {Poset structure of $\Omega_2[4]$}
    \label {fig:double_dimer_poset}
    \end {figure}
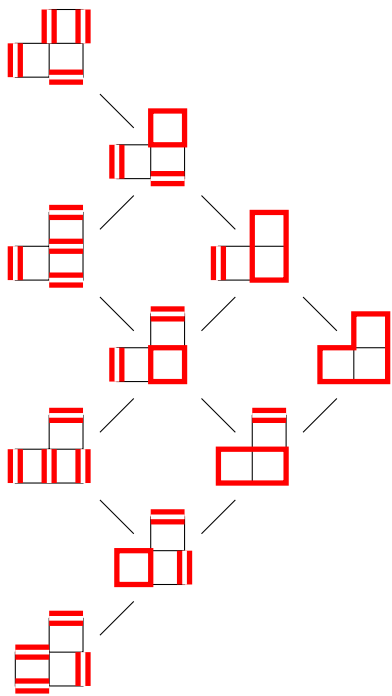
\end {example}

\begin {example}
    We could consider the other extreme, where we switch the roles of $\mathcal{G}$ and $\mathcal{G}^*$ in the
    previous example, so that $\mathcal{G}^*$ is a zigzag snake graph, and $\mathcal{G}$ is a straight snake graph. In this case,
    Corollary \ref{cor:dimer_gen_fn} says that $F(x) = \frac{f(x)}{(1-x)^{n+1}}$, where $f(x)$ is the descent generating function
    for linear extensions of the poset $\mathcal{P}(\mathcal{G}^*)$. This is the poset with elements $b_1,\dots,b_n$ and relations
    $b_1 < b_2 > b_3 < b_4 > b_5 < \cdots$. For example, when $n=4$, if we take $b_1 < b_3 < b_2 < b_4$ as our distinguished
    linear extension $\pi$, then there are 5 linear extensions of the poset, illustrated in Figure \ref{fig:lin_ext}.

    \begin {figure}
    \centering
    \begin {tikzpicture}
        \begin {scope}[shift={(9,1)}]
            \draw (0,0) node{$
            \begin {array} {ccc}
                \sigma & \mathrm{maj}(\sigma) & \mathrm{des}(\sigma) \\ \hline
                1234   & 0                    & 0 \\
                1243   & 3                    & 1 \\
                2134   & 1                    & 1 \\
                2413   & 2                    & 1 \\ 
                2143   & 4                    & 2 \\
            \end {array}
            $};
        \end {scope}

        \draw (0,0) node {$b_1$};
        \draw (1.5,1.5) node {$b_2$};
        \draw (3,0) node {$b_3$};
        \draw (4.5,1.5) node {$b_4$};

        \draw (0.2,0.2) -- (1.3,1.3);
        \draw (1.7,1.3) -- (2.8,0.2);
        \draw (3.2,0.2) -- (4.3,1.3);

        \draw (0,-0.5) node {1};
        \draw (1.5,2) node {3};
        \draw (3,-0.5) node {2};
        \draw (4.5,2) node {4};
    \end {tikzpicture}
    \caption {The linear extensions of a fence poset. The labels next to the $b_i$'s are the values of the distinguished linear extension $\pi$.}
    \label {fig:lin_ext}
    \end {figure}

    Therefore $f(x) = 1+3x+x^2$, and we have
    \[ F(x) = \frac{1 + 3x + x^2}{(1-x)^5} = 1 + 8x + 31x^2 + 85x^3 + 190x^4 + \cdots \]
    This means the snake graph which is a horizontal row of 4 tiles has $8$ dimer covers, $31$ double-dimer covers, $85$ triple-dimer covers, etc.
    If we include the $q$-variable and take the maj statistic into account, we get
    \begin {align*} 
        F(q,x) &= \frac{1 + q[3]_q x + q^4 x^2}{(x;q)_5} \\
               &= 1 + (1+2q+2q^2+2q^3+q^4)x \\
               &\phantom{=} + (1+2q+4q^2+5q^3+7q^4+5q^5+4q^6+2q^7+q^8)x^2 + \cdots 
    \end {align*}
    This means for example that the 8 dimer covers of $\mathcal{G}$ have a poset structure with rank function $1+2q+2q^2+2q^3+q^4$,
    and the 31 double-dimer covers have poset structure with rank function $1+2q+4q^2+5q^3+7q^4+5q^5+4q^6+2q^7+q^8$.
\end {example}

\begin {remark}
    In \cite{moz2}, it was shown that the $\lambda$-lengths in the decorated super Teich\-m\"{u}ller space satisfy a Laurent phenomenon,
    and that the terms in the Laurent expressions are given as the weights of double dimer covers of snake graphs. Analogous to the case
    with ordinary cluster algebras, the terms in these Laurent expressions have a natural partial order. These posets are precisely 
    $\Omega_2(\mathcal{G})$ {and $\mathcal{L}_2(\mathcal{G}^*)$ discussed above}. In particular, the $x^2$-coefficient of the generating
    function $F(q,x)$ is a version of the $F$-polynomial for these super cluster variables.
\end {remark}

\section{Continued Fractions, Fibonacci Numbers and Golden Ratio}
\label{sec:fixing_m}

We now consider generating functions where we fix $m$, and look at the sizes of $\Omega_m(\mathcal{G}_k)$ for some natural sequence of snake graphs
$\mathcal{G}_1, \mathcal{G}_2, \mathcal{G}_3, \dots$. As in the previous section, we consider the two natural examples where either
$\mathcal{G}_n$ or $\mathcal{G}_n^*$ is a straight snake graph with $n$ tiles (and the dual is a zigzag).

\begin {example} \label{ex:fibonacci}
    When $\mathcal{G}_k$ is the straight snake graph with $k$ tiles (and $\mathcal{G}^*_k$ is a zigzag), and $m=1$, it is well-known 
    that $\left| \Omega_1(\mathcal{G}_k) \right|$ is the Fibonacci sequence. If we $q$-count the dimer covers (i.e. consider the rank generating
    function of $\Omega_1(\mathcal{G}_k)$), then we get the sequence of polynomials 
    \[ f_k(q) = \sum_{d \in \Omega_1(\mathcal{G}_k)} q^{\mathrm{rk}(d)} \]
    These polynomials are the numerators of the $q$-deformed ratios of Fibonacci numbers $\left[ \frac{f_k}{f_{k-1}} \right]_q$
    in the language of Morier-Genoud and Ovsienko's $q$-rationals (see \cite{mgo_20}, section 6). The denominators are the rank generating
    functions of the dual posets (equivalently they are obtained from these polynomials by reversing the order of the coefficients).
    The first several polynomials are
    \begin {align*}
        f_1(q) &= 1 + q \\
        f_2(q) &= 1 + q + q^2 \\
        f_3(q) &= 1 + q + 2q^2 + q^3 \\
        f_4(q) &= 1 + 2q + 2q^2 + 2q^3 + q^4 \\
        f_5(q) &= 1 + 2q + 3q^2 + 3q^3 + 3q^4 + q^5 \\
        f_6(q) &= 1 + 3q + 4q^2 + 5q^3 + 4q^4 + 3q^5 + q^6
    \end {align*}
    They satisfy the following $q$-deformed version of the Fibonacci recurrence:
    \[
        f_n(q) = \begin{cases}
            f_{n-1}(q) + q^2 f_{n-2}(q) & \text{ if $n$ is odd} \\[1ex]
            q f_{n-1}(q) + f_{n-2}(q) & \text{ if $n$ is even}
        \end{cases}
    \]
\end {example}

\begin {example} \label{ex:straight_double_dimers}
    Consider again the sequence $\mathcal{G}_k$ of straight snake graphs, as in the previous example, but now fix $m=2$.
    That is, we are considering the double-dimer covers of straight snake graphs. By Theorem \ref{thm:matrix}, the top-left 
    entry of the matrix $\Lambda^{(2)}(1)^{k+1}$ is equal to $\left| \Omega_2(\mathcal{G}_k) \right|$, the number of double-dimer
    covers of the straight snake graph with $k$ tiles. 
    It is a simple calculation to see that
    \[ 
        (I - x \Lambda^{(2)}(1))^{-1} = \frac{1}{1-2x-x^2+x^3}
        \begin{pmatrix} 
            1-x & x & x-x^2 \\[1ex]
            x & 1-x-x^2 & x^2 \\[1ex] 
            x-x^2 & x^2 & 1-2x 
        \end{pmatrix}
    \]
    Using the standard matrix identity $\sum_{k \geq 0} M^{k+1} x^k = M(I - xM)^{-1}$ when $M$ is square, the equation above implies that 
    the sum $\sum_{k \geq 0} \Lambda^{(2)}(1)^{k+1} x^k$ is given by the following matrix of rational functions:
    \[ 
        \sum_{k \geq 0} \Lambda^{(2)}(1)^{k+1}x^k = \frac{1}{1-2x-x^2+x^3} 
        \begin{pmatrix} 
            1+x-x^2 & 1 & 1-x \\[1ex]
            1 & 1-x^2 & x \\[1ex] 
            1-x & x & x-x^2 
        \end{pmatrix} 
    \]
    In particular, looking at the top-left 
    entry, we see that the generating function for the sequence $\left| \Omega_2(\mathcal{G}_k) \right|$ is
    given by
    \begin{equation} \label{Eq:13614}
     \sum_{k \geq 0} \left| \Omega_2(\mathcal{G}_k) \right| x^k = \frac{1+x-x^2}{1-2x-x^2+x^3} = 1 + 3x + 6x^2 + 14x^3 + 31x^4 + \cdots 
     \end{equation}
\end {example}

See OEIS A006356 or \cite{BK76} where an interpretation in terms of order ideals in a distributive lattice appears for the sequence associated to this generating function.
We return to our original question concerning sizes of $\Omega_m(\mathcal{G}_k)$ for sequences of snake graphs $\mathcal{G}_1, \mathcal{G}_2, \mathcal{G}_3, \dots$ 
after introducing some notation motivated by the above.

\subsection{Higher Continued Fractions from Snake Graphs}

As described just above Equation (\ref{eq:contfrac}), an ordinary continued fraction $[a_1,a_2,\dots,a_n]$ can be expressed as $\frac{p_n}{q_n}$, 
and we can think of this projectively as either the element $(p_n : q_n)$ or $(\frac{p_n}{q_n}: 1)$ in $\Bbb{QP}^1$. This is the projectivization
of the first column of the $2 \times 2$ matrix in Equation (\ref{eq mat}).
We now extend this to higher $m$, considering the projectivization of the first column of the $(m+1) \times (m+1)$ matrix $\Lambda^{(m)}(a_1) \cdots \Lambda^{(m)}(a_n)$
as a generalized continued fraction.

\begin {definition} \label{def:r_m}
        For a list of positive integers $a_1,a_2,\dots,a_n$, 
    the corresponding \emph{finite $m$-dimensional continued fraction} is a vector\footnote{One can also view $\mathrm{CF}_m$ as the homogenous coordinates of a point in projective space.} in $\Bbb{Q}^{m+1}$,
    \[ \mathrm{CF}_m(a_1,\dots,a_n) := \Big( r_{m,m}(a_1,\dots,a_n), \dots, r_{2,m}(a_1, \dots, a_n), r_{1,m}(a_1,\dots,a_n), r_{0,m}(a_1,\dots,a_n)\Big) \]
    whose entries are defined recursively as follows.
    For base cases, we set $r_{0,m}(a_1,\dots,a_n) = 1$ (for all $n$), and $r_{k,m}(a) = \llb a \rrb_k = \mbinom{a}{k}$. For $k > 0$ or $n > 1$, we define
    \[ r_{k,m}(a_1,\dots,a_n) = \sum_{i=0}^k \llb a_1\rrb_i \cdot \frac{r_{m-k+i,m}(a_2,\dots,a_n)}{r_{m,m}(a_2,\dots,a_n)} \]
\end {definition}

For the sake of brevity, we will write $r_m := r_{m,m}$. 

\begin{theorem} \label{thm:r_m}
    For $0 \leq i \leq m$, the values recursively defined in Definition \ref{def:r_m} are ratios of matrix elements, i.e. 
    \[r_{i,m}(a_1,\dots,a_n)={X_{m+1-i,1}\over X_{m+1,1}}\]
    where 
    $X=\Lambda^{(m)}(a_1)\cdots\Lambda^{(m)}(a_n)$ is the matrix from Theorem \ref{thm:matrix}.
    In the special case that $i=m$, we obtain
    \[r_m(a_1,\dots, a_n)={\# \Omega_m(\mathcal{G}[a_1,\dots,a_n])\over \# \Omega_m(\mathcal{G}[a_2,\dots,a_n])}.\]
\end{theorem}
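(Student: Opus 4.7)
The plan is to prove this by induction on $n$, using the structure of the matrix $\Lambda^m(a_1)$ to translate the recursion in \Cref{def:r_m} directly into an assertion about matrix multiplication on the first column of the product. Let $Y = \Lambda^m(a_2)\cdots\Lambda^m(a_n)$, so that $X = \Lambda^m(a_1)\,Y$, and assume by induction that $r_{s,m}(a_2,\dots,a_n) = Y_{m+1-s,\,1}/Y_{m+1,\,1}$ for all $0 \le s \le m$. The base case $n=1$ is immediate: $X = \Lambda^m(a_1)$, whose first column reads $(\llb a \rrb_m, \llb a\rrb_{m-1}, \dots, \llb a\rrb_1, 1)^\top$, and one checks this matches the stated base values $r_{k,m}(a) = \llb a \rrb_k$ and $r_{0,m} \equiv 1$.

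The central calculation is in two parts. First, because the bottom row of $\Lambda^m(a_1)$ is $(1,0,\dots,0)$, matrix multiplication gives
\[ X_{m+1,\,1} \;=\; \sum_{j=1}^{m+1} \llb a_1 \rrb_{1-j}\, Y_{j,\,1} \;=\; Y_{1,\,1}, \]
since $\llb a_1 \rrb_{1-j} = 0$ for $j \ge 2$. Second, for general $k$ with $0 \le k \le m$,
\[ X_{m+1-k,\,1} \;=\; \sum_{j=1}^{m+1} \llb a_1 \rrb_{k+1-j}\, Y_{j,\,1} \;=\; \sum_{i=0}^{k} \llb a_1 \rrb_i \, Y_{k+1-i,\,1}, \]
after the re-indexing $i = k+1-j$ and discarding the automatically-zero terms with $i<0$.

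Dividing by $X_{m+1,1} = Y_{1,1}$ and applying the induction hypothesis via the identity $Y_{k+1-i,\,1}/Y_{1,\,1} = r_{m-k+i,m}(a_2,\dots,a_n)/r_{m,m}(a_2,\dots,a_n)$ (obtained by writing numerator and denominator as ratios over $Y_{m+1,1}$), we recover exactly the recursion of \Cref{def:r_m}. Thus $r_{k,m}(a_1,\dots,a_n) = X_{m+1-k,\,1}/X_{m+1,\,1}$, completing the induction. For the special case $i = m$, the formula becomes $r_m(a_1,\dots,a_n) = X_{1,1}/X_{m+1,1}$, and by \Cref{thm:matrix} together with \Cref{thm 1} we have $X_{1,1} = \llb a_1,\dots,a_n\rrb_m = \#\Omega_m(\mathcal{G}[a_1,\dots,a_n])$; applying the same identification to $Y$ yields $X_{m+1,1} = Y_{1,1} = \#\Omega_m(\mathcal{G}[a_2,\dots,a_n])$.

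There is no real obstacle here beyond careful bookkeeping of the index conventions; the main subtlety is recognizing that the apparent division by $r_{m,m}(a_2,\dots,a_n)$ in the recursion is exactly the renormalization needed to turn the shifted first column of $Y$ into a first-column vector whose bottom entry is $1$, which is why the right-hand ratio $X_{m+1-k,1}/X_{m+1,1}$ depends only on the direction of the first column of $X$ in $\Bbb{QP}^m$ rather than on its scaling.
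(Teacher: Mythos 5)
Your proposal is correct and follows essentially the same route as the paper's own proof: induction on $n$, expanding the first column of $X = \Lambda^m(a_1)Y$ using the constant-antidiagonal structure of $\Lambda^m(a_1)$, dividing by $X_{m+1,1}=Y_{1,1}$, and matching the result to the recursion in the definition, with the special case $i=m$ handled by the dimer-count identification of the $(1,1)$-entries. The only differences are cosmetic (index names and the observation that the ratio depends only on the projective class of the first column).
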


\begin{proof}
    We begin with the first statement.
    Induct on $n$. When $n=1$, we have by definition that $r_{k,m}(a_1) = \llb a_1 \rrb_k$, which are the entries in the first
    column of $\Lambda^{(m)}(a_1)$.

    Now suppose the result is true for $Y = \Lambda^{(m)}(a_2) \cdots \Lambda^{(m)}(a_n)$; that is, $r_{i,m}(a_2,\dots,a_n) = \frac{Y_{m+1-i,1}}{Y_{m+1,1}}$. 
    The entries in the first column of the product $X = \Lambda^{(m)}(a_1) Y$ are then
    \begin {align*}
        X_{m+1-i,1} &= \sum_{k=1}^{m+1} \Lambda^{(m)}(a_1)_{m+1-i,k} Y_{k,1} \\
                    &= \sum_{k=1}^{i+1} \llb a_1 \rrb_{i-k+1} Y_{k,1} \\
                    &= \sum_{j=0}^i \llb a_1 \rrb_j Y_{i-j+1,1} \\
                    &= \sum_{j=0}^i \llb a_1 \rrb_j r_{m+j-i,m}(a_2,\dots,a_n) Y_{m+1,1} \tag{induction}
    \end {align*}
    Now we divide by $X_{m+1,1}$, and note that $X_{m+1,1} = Y_{1,1}$, to get
    \[ \frac{X_{m+1-i,1}}{X_{m+1,1}} = \sum_{j=0}^i \llb a_1 \rrb_j r_{m+j-i,m}(a_2,\dots,a_n) \frac{Y_{m+1,1}}{Y_{1,1}} \]
    Lastly, we use induction again to note that $\frac{Y_{m+1,1}}{Y_{1,1}} = \frac{1}{r_m(a_2,\dots,a_n)}$.
    This proves that  
    \[\frac{X_{m+1-i,1}}{X_{m+1,1}} = \sum_{j=0}^i \llb a_1 \rrb_j \frac{r_{m+j-i,m}(a_2,\dots,a_n)}{r_{m}(a_2,\dots,a_n)} =  r_{i,m}(a_1,\dots, a_n).\]
    We also note that in the special case that $i=m$, we get $r_{m}(a_1,\dots, a_n) = \frac{X_{1,1}}{X_{m+1,1}} = \frac{X_{1,1}}{Y_{1,1}}$, and thus 
    the second statement follows by two applications of Theorem \ref{thm intro1}.  
\end{proof}

\begin {remark}
    For each $m$, we can define a map $\Bbb{Q}_{\geq 1} \to \Bbb{Q}_{\geq 1}$, which we also denote $r_m$, as follows.
    If $\alpha \in \Bbb{Q}_{\geq 1}$ has continued fraction $\alpha = [a_1,a_2,\dots,a_n]$, then we define $r_m(\alpha) := r_m(a_1,\dots,a_n)$.
    By construction, if $X = \Lambda^{(m)}(a_1) \cdots \Lambda^{(m)}(a_n)$, then $r_m(\alpha) = \frac{X_{11}}{X_{m+1,1}}$.
    Note that Lemma \ref{lem:bracket_equivalence} and Lemma \ref{lemma:new} (see below) show that this is well-defined, since the two different continued fraction representations
    of $\alpha \in \Bbb{Q}_{\geq 1}$ will give the same result and the image will be greater than one.
\end {remark}

\begin{remark}
    Furthermore, using Remark \ref{rem:allri}, the values of $r_{i,m}(\alpha)  = \frac{X_{i1}}{X_{m+1,1}}$ are similarly well-defined, and do not depend on the specific choice of 
    continued fraction representation of $\alpha \in \Bbb{Q}_{\geq 1}$.
\end{remark}

Now let's look at the recurrence for some small values of $m$. For visual reference, the graphs of the maps $r_2(x)$, $r_3(x)$ and $r_4(x)$ are pictured 
in Figure \ref{fig:r2_graph}.

\begin {figure}
\centering
    \includegraphics[scale=0.3]{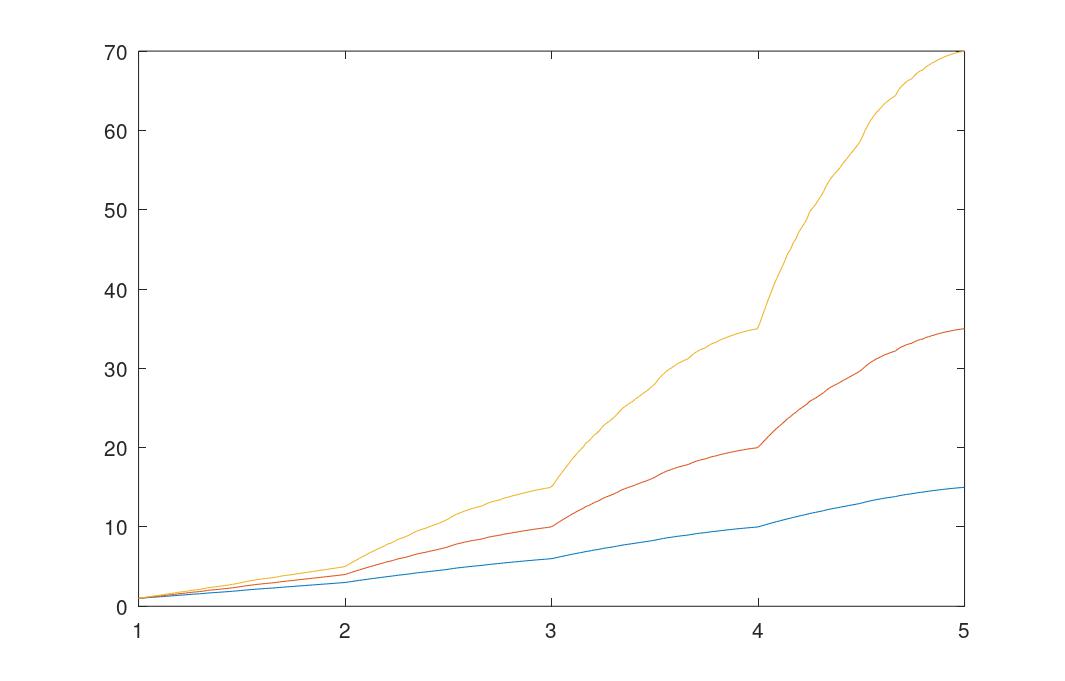}
\caption {The graphs of $x \mapsto r_m(x)$ for $m=2,3,4$ and $x \in [1,5]$. The graph of $r_2$ is in blue (bottom),
$r_3$ is orange (middle), and $r_4$ is yellow (top).}
\label {fig:r2_graph}
\end {figure}

\begin{example}[$m=1$]
        The map $r_1 \colon \Bbb{Q}_{\geq 1} \to \Bbb{Q}_{\geq 1}$ is simply the identity function. In other words, $r_1(a_1,\dots,a_n) = [a_1,\dots,a_n]$. Indeed, we have
	\[r_1(a_1,\dots,a_n)={\llb a_1\rrb_1\cdot r_1(a_2,\dots,a_n)+1\over r_1(a_2,\dots,a_n)}=a_1+{1\over r_1(a_2,\dots,a_n)}, \]
        which is the usual recursive expression for a continued fraction.
\end{example}

\begin{example}[$m=2$]
	$\mathrm{CF}_2(a_1,\dots,a_n) = \left( r_2(a_1,\dots,a_n), \, r_{1,2}(a_1,\dots,a_n), \, 1 \right)$, where $r_{1,2}$ and $r_2$ satisfy the recurrences
	\[
            r_{1,2}(a_1,\dots,a_n)={\llb a_1\rrb_1 \cdot r_2(a_2,\dots,a_n) + r_{1,2}(a_2,\dots,a_n) \over r_2(a_2,\dots,a_n)}
                = a_1 + \frac{r_{1,2}(a_2,\dots,a_n)}{r_2(a_2,\dots,a_n)}
        \]
	\begin {align*}
            r_2(a_1,\dots,a_n) &= {\llb a_1\rrb_2 \cdot r_2(a_2,\dots,a_n)+\llb a_1 \rrb_1\cdot r_{1,2}(a_2,\dots,a_n)+ 1\over r_2(a_2,\dots,a_n)} \\
                &=  \mbinom{a_1}{2} + a_1 \frac{r_{1,2}(a_2,\dots,a_n)}{r_2(a_2,\dots,a_n)} + \frac{1}{r_2(a_2,\dots,a_n)} 
        \end {align*}
\end{example}

\begin{example}[$m=3$] Let $\mathrm{CF}_3(a_1,\dots,a_n) = (r_3,r_{2,3},r_{1,3},1)$ and $\mathrm{CF}_3(a_2,\dots,a_n) = (r_3',r_{2,3}',r_{1,3}',1)$.  Then we can express the relevant values as

\[r_{1,3} = a_1 + \frac{r_{2,3}'}{r_3'}  \]

\[ r_{2,3} = \mbinom{a_1}{2} + a_1 \frac{r_{2,3}'}{r_3'} + \frac{r_{1,3}'}{r_3'} \]

\[r_3 = \mbinom{a_1}{3} + \mbinom{a_1}{2} \frac{r_{2,3}'}{r_3'} + a_1 \frac{r_{1,3}'}{r_3'} + \frac{1}{r_3'}  \]
\end{example}

\begin{remark}
There are numerous places in the literature where higher dimensional continued fractions, or ternary continued fractions (in the special case of $m=2$), have previously been defined.  
In 1868, Jacobi presented an algorithm using linear recurrences for periodic infinite ternary continued fractions \cite{jacobi1868allgemeine}.  
This algorithm in modern language uses matrix multiplication similar to our approach, except instead of the fundamental matrix being $\Lambda^{(2)}(a)$, it is the $3$-by-$3$ matrix
$\left[ \begin{matrix} a_0 & 1 & 0 \\ b_0 & 0 & 1 \\ 1 & 0 & 0 \end{matrix}\right]$, as explained in \cite[Sec. 5.2]{karpenkov2021hermite}.  
See also \cite[Sec. 2.1]{karpenkov2021hermite} for a related formulation of Jacobi's algorithm using fractional parts analogous to the Euclidean algorithm, 
or \cite{klein1896representation} for an approach to a geometric formulation of multidimensional continued fractions due to Klein, and more recent work \cite{simplest-2-dim} 
showcasing a nice family of such examples in the $m=2$ case.

We note that our construction is in some sense an example of a \emph{multidimensional continued fraction expansion} as defined in \cite{brentjes1981multi}.
It is defined as a sequence of bases of the lattice $\Bbb{Z}^n$ such that a given ray is in the positive span of each basis in the sequence, and that each basis
is obtained from the previous by some elementary row operations. In this sense, the columns of our matrices $X_n = \Lambda^{(m)}(a_1)\Lambda^{(m)}(a_2) \cdots \Lambda^{(m)}(a_n)$
form the sequence of bases, and the ray in the direction of the vector $(r_m(x), r_{m-1,m}(x), \dots, r_{2,m}(x), r_{1,m}(x), 1)$ is at every step in the positive span
of the columns.
\end{remark}

\subsection{Limiting Values}

As in Theorem \ref{thm:r_m}, the values $r_{i,m} \in \Bbb{Q}_{\geq 1}$ can be given as the ratio of matrix entries 
$\frac{X_{m+1-i,1}}{X_{m+1,1}}$, i.e. the ratio of entries in the first column of the matrix $X = \Lambda^{(m)}(a_1)\cdots \Lambda^{(m)}(a_n)$, for 
a given $[a_1,a_2,\dots, a_n]$.  We now show that these definitions can be extended to infinite continued fractions as well.  

\begin {lemma} \label{lemma:new}
    Let $a_1,a_2,\dots,a_n$ be any finite sequence of positive integers. Then the entries of the matrix $X = \Lambda^{(m)}(a_1) \cdots \Lambda^{(m)}(a_n)$ 
    decrease along rows and columns. That is, if $i < i'$ and $j < j'$, then $X_{ij} \geq X_{i'j}$ and $X_{ij} \geq X_{ij'}$.
\end {lemma}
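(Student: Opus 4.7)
The plan is to prove this by straightforward induction on $n$. The key observation is that $\Lambda^m(a)$ itself already has the monotonicity property claimed for $X$, and that the property of being a non-negative matrix whose entries weakly decrease along rows and columns is preserved under matrix multiplication.

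First I would record the base-case fact: the $(i,j)$-entry of $\Lambda^m(a)$ is $\llb a \rrb_{m+2-i-j}$, so as $i$ or $j$ increases (with the other fixed), the subscript $m+2-i-j$ strictly decreases. Since $\llb a \rrb_k = \binom{a+k-1}{k}$ for $k \geq 1$, $\llb a \rrb_0 = 1$, and $\llb a \rrb_k = 0$ for $k < 0$, one checks that $k \mapsto \llb a \rrb_k$ is (weakly) non-decreasing in $k$ over all integers (the ratio $\llb a \rrb_{k+1}/\llb a \rrb_k = (a+k)/(k+1) \geq 1$ whenever $a \geq 1$ and $k \geq 0$, and crossing from $k<0$ to $k=0$ goes from $0$ to $1$). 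Hence every $\Lambda^m(a_i)$ has non-negative entries that weakly decrease along rows and along columns; this proves the $n=1$ case.

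For the inductive step, assume $Y := \Lambda^m(a_1) \cdots \Lambda^m(a_{n-1})$ has non-negative entries which weakly decrease along rows and columns, and set $X = Y \Lambda^m(a_n)$. For rows, with $i < i'$,
\[ X_{ij} - X_{i'j} = \sum_{k=1}^{m+1} (Y_{ik} - Y_{i'k}) \, \Lambda^m(a_n)_{kj} \geq 0, \]
since $Y_{ik} \geq Y_{i'k}$ by induction and $\Lambda^m(a_n)_{kj} \geq 0$. For columns, with $j < j'$,
\[ X_{ij} - X_{ij'} = \sum_{k=1}^{m+1} Y_{ik} \, \bigl(\Lambda^m(a_n)_{kj} - \Lambda^m(a_n)_{kj'}\bigr) \geq 0, \]
by the base-case monotonicity of $\Lambda^m(a_n)$ along rows and by $Y_{ik} \geq 0$. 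This also shows $X$ has non-negative entries, so the induction carries. The result follows.

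There is no real obstacle here: the only thing one must be slightly careful about is verifying that $\llb a \rrb_k$ is non-decreasing across the boundary $k=0$ and that non-negativity propagates through the product, both of which are immediate.
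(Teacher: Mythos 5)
Your proof is correct and follows essentially the same route as the paper's: induction on $n$, with the row comparison reduced to the monotonicity of $k \mapsto \mbinom{a}{k}$ (hence of the entries of $\Lambda^m(a_n)$) and the column comparison reduced to the inductive hypothesis on $Y$ together with non-negativity. Your explicit check of the ratio $(a+k)/(k+1)$ and of the crossing at $k=0$ is slightly more careful than the paper's one-line appeal to the same fact, but the argument is the same.
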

\begin {proof}
    We will induct on $n$. For $n=1$ it is evident from the form of the matrix $\Lambda^{(m)}(a)$, and the fact that
    the sequence $\mbinom{a}{k}$ is increasing.

    Now suppose the entries of $Y = \Lambda^{(m)}(a_1) \cdots \Lambda^{(m)}(a_{n-1})$ decrease along rows and columns, and let $X = Y \Lambda^{(m)}(a_n)$. We have
    \begin {align*} 
        X_{ij} &= \sum_k Y_{ik} \Lambda^{(m)}(a_n)_{kj} \\
               &= \sum_k Y_{ik} \mbinom{a_n}{m+2-k-j} \\
               &> \sum_k Y_{ik} \mbinom{a_n}{m+2-k-j'} \\
               &= X_{ij'} 
    \end {align*}
    The inequality in the middle step follows from the fact that $\mbinom{a_n}{k}$ increases as a function of $k$, and the fact that the entries of $Y$ are positive
    (we did not in fact require the assumption that the entries of $Y$ decrease along rows or columns).
    This shows that the entries of $X$ decrease along rows. Similarly, we have
    \begin {align*} 
        X_{ij} &= \sum_k Y_{ik} \Lambda^{(m)}(a_n)_{kj} \\
               &= \sum_k Y_{ik} \mbinom{a_n}{m+2-k-j} \\
               &\geq \sum_k Y_{i'k} \mbinom{a_n}{m+2-k-j} \\
               &= X_{i'j} 
    \end {align*}
    This time the inequality follows from the assumption that $Y_{ik} \geq Y_{i'k}$ if $i < i'$. This shows the entries of $X$ decrease along columns.
\end {proof}

\begin{theorem} \label{thm:m-map}
For any $m \geq 1$, and any infinite continued fraction $[a_1,a_2,\dots ]$, 
and recalling our notational convention $r_m = r_{m,m}$, the sequence of values 
$r_{m}(a_1), r_{m}(a_1,a_2),$ $r_{m}(a_1,a_2,a_3), \dots, r_{m}(a_1,a_2,\dots, a_n), \dots$ converges to a real number.
\end{theorem}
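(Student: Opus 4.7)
By \Cref{thm:r_m}, $r_m(a_1,\dots,a_n) = (X_n)_{1,1}/(X_n)_{m+1,1}$ where $X_n := \Lambda^m(a_1)\cdots\Lambda^m(a_n)$, so it suffices to show that the projective class $[X_n e_1] \in \Bbb{RP}^m$ converges. The strategy is a nested-convex-cones argument via the Hilbert projective metric, in the spirit of non-autonomous Perron--Frobenius theory.

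First I would verify that $X_n$ has strictly positive entries for every $n \geq 2$. This reduces to showing that $\Lambda^m(a)\Lambda^m(b)$ is strictly positive for any positive integers $a,b$: its $(i,j)$-entry equals $\sum_k \llb a\rrb_{m+2-i-k}\llb b\rrb_{m+2-k-j}$, and the Hankel structure makes it straightforward to exhibit an index $k$ with both factors nonzero. Consequently the convex cones $K_n := X_n(\Bbb{R}^{m+1}_{\geq 0})$ form a decreasing nested family (each $\Lambda^m(a)$ preserves the non-negative orthant), and for $n \geq 2$ their projective images $[K_n] \subset \Delta^m$ are compact sets contained in the interior of the simplex, with $[X_n e_1] \in [K_n]$.

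The heart of the argument invokes Birkhoff's theorem: any strictly positive matrix $M$ acts as a strict contraction in the Hilbert projective metric $d_H$ with coefficient $\tau(M) = \tanh(\Delta(M)/4) < 1$, where $\Delta(M)$ is the projective diameter of the image of the positive orthant. Grouping factors into pairs $M_k := \Lambda^m(a_{2k-1})\Lambda^m(a_{2k})$ and writing $X_{2n} = M_1\cdots M_n$, iterated contraction gives $\mathrm{diam}_H([K_{2n}]) \leq \Delta(M_n)\prod_{k=1}^{n-1}\tau(M_k)$. The principal obstacle will be establishing a uniform bound $\tau(\Lambda^m(a)\Lambda^m(b)) \leq c_m < 1$ for all $a,b \geq 1$. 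I expect this to split into two regimes: when $\max(a,b)$ is bounded only finitely many pairs arise and compactness yields the bound, while when $\max(a,b)\to\infty$ the dominant entry $\llb a\rrb_m = \binom{a+m-1}{m}$ forces $\Lambda^m(a)$ to become nearly rank-one, driving $\tau \to 0$. The explicit calculation is elementary but somewhat tedious.

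Granting this uniform bound, the Hilbert diameter of $[K_{2n}]$ decays geometrically in $n$, so the nested compacts $[K_n] \subset \Delta^m$ intersect in a single point, and $[X_n e_1]$, lying in each $[K_n]$, must converge to that point. Hence $r_m(a_1,\dots,a_n)$ converges, and the a~priori bound $r_m(a_1,\dots,a_n) \in [1,\binom{a_1+m}{m}]$, which follows from \Cref{lem:straight_snake}, \Cref{lemma:new}, and the hockey-stick identity, confirms the limit is finite. The same argument applied to the whole projective class $[X_n e_1]$ (rather than only the first and last coordinate) would yield convergence of every $r_{i,m}(a_1,\dots,a_n)$, establishing the stronger form stated as \Cref{thm:intro2}.
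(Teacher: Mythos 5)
Your proposal is correct in outline, but it takes a genuinely different route from the paper. The paper's argument is elementary and self-contained: it also passes to the nested cones $C_n = X_n(\Bbb{R}^{m+1}_{\geq 0})$ and their affine slices $T_n$ at $x_{m+1}=1$, but then uses the monotonicity of the entries of $X_n$ along rows and columns (\Cref{lemma:new}) to show that each vertex of $T_{n+1}$ is a weighted average of vertices of $T_n$ weighted most heavily toward $T_n(1)$, so that $T_{n+1}$ lies in a simplex of the barycentric subdivision of $T_n$; the classical diameter estimate for iterated barycentric subdivision then forces $\mathrm{diam}(T_n)\to 0$. You instead invoke Birkhoff's contraction theorem for the Hilbert projective metric, which requires strict positivity; your device of pairing consecutive factors is necessary and works, since for $k=1$ both indices $m+2-i-k$ and $m+2-k-j$ are non-negative, so every entry of $\Lambda^m(a)\Lambda^m(b)$ is positive. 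The crux of your approach, which you state but do not prove, is the uniform bound $\tau(\Lambda^m(a)\Lambda^m(b))\le c_m<1$; this is true and your two-regime sketch is sound, but to make the large-parameter regime airtight you must check uniformity in the smaller parameter: writing $M_{ij}=\sum_k \llb a\rrb_{m+2-i-k}\llb b\rrb_{m+2-k-j}$ with $a\ge b$, the $k=1$ term $\llb a\rrb_{m+1-i}\llb b\rrb_{m+1-j}$ dominates the tail by a factor $1+O(1/a)$ \emph{uniformly in} $b$ (since $\llb b\rrb_{m+2-k-j}/\llb b\rrb_{m+1-j}\le 1$ for $k\ge 2$), so all cross-ratios tend to $1$ and $\Delta\to 0$ uniformly, while the finitely many pairs with $\max(a,b)$ bounded each have $\tau<1$ by strict positivity. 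What each approach buys: yours yields a geometric rate of convergence and generalizes to arbitrary products of non-negative matrices with uniformly positive pairwise products, at the cost of importing Birkhoff's theorem and the tedious uniform estimate; the paper's stays entirely within elementary convex geometry and reuses \Cref{lemma:new}, but gives no explicit rate. Your closing remarks (the a priori bound via the hockey-stick identity, and the extension to all $r_{i,m}$, matching \Cref{thm:intro2}) are correct but not needed once the nested compacts in the open simplex shrink to a single interior point.
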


\begin{proof}
    Let $X_n = \Lambda^{(m)}(a_1) \Lambda^{(m)}(a_2) \cdots \Lambda^{(m)}(a_n)$, and let $C_n$ be the cone which is the positive span of the columns of $X_n$
    (i.e. the image under $X_n$ of the positive orthant).    Let $X_n(i)$ be the $i^\mathrm{th}$ column of $X_n$. Since all entries of $\Lambda^{(m)}(a_{n+1})$
    are non-negative, the columns of  $X_{n+1}$ are non-negative linear combinations of the $X_n(i)$'s, and hence $C_{n+1} \subset C_n$. We therefore
    have a decreasing chain of sets $C_1 \supset C_2 \supset C_3 \supset \cdots$.

    Let $T_n$ be the intersection of $C_n$ with the {hyperplane defined by $x_{m+1}=1$ (i.e. working in the affine chart where the last coordinate is 1)}, 
    and let $T_n(i)$ be the scalar multiple of $X_n(i)$ with $x_{m+1}=1$. 
    Note that $T_n$ is a simplex, and by Theorem~\ref{thm:r_m} the first vertex is always
    \[ T_n(1) = \mathrm{CF}_m(a_1,\dots,a_n) = \begin{pmatrix} r_m(a_1,\dots,a_n) \\ r_{m-1,m}(a_1,\dots,a_n) \\ \vdots \\ r_{2,m}(a_1,\dots,a_n) \\ r_{1,m}(a_1,\dots,a_n) \\ 1 \end{pmatrix}. \] 
    From the previous paragraph, we have a decreasing chain of simplices $T_1 \supset T_2 \supset T_3 \supset \cdots$.
    We want to see that the intersection of this decreasing chain of $T_n$'s is a single point. Then the vectors $(r_{m}, r_{m-1,m}, \dots, r_{2,m}, r_{1,m},1)$ 
    will converge to this point.

    Note that the \emph{diameter} of $T_n$ (the maximal distance between any two points) is the length of the longest of its $(m+1)$ sides.
    If we can show that the diameters converge to zero, then we will be done.

    Note that $T_{n+1}(m+1) = T_n(1)$, so each simplex shares one vertex with the previous one. Since $X_{n+1}(m)$ is a linear combination of $X_n(1)$ and $X_n(2)$,
    the vertex $T_{n+1}(m)$ is on the line segment between $T_n(1)$ and $T_n(2)$. Moreover, it is closer to $T_n(1)$ than to $T_n(2)$.
    This is because $T_{n+1}(m) = \frac{1}{a_{n+1} (X_n)_{m+1,1} + (X_n)_{m+1,2}} (a_{n+1} (X_n)_{m+1,1} T_n(1) + (X_n)_{m+1,2} T_n(2))$ and by the lemma, we have $a_{n+1} (X_n)_{m+1,1} \geq (X_n)_{m+1,2}$. 
    Similarly, the vector $X_{n+1}(m-1)$ is a linear combination of $X_n(1), X_n(2),$ and $X_n(3)$ 
    (actually a weighted average; i.e. the coefficients sum to 1), and it is 
    most heavily weighted towards $T_n(1)$, and least towards $T_n(3)$ (again by the lemma). This is illustrated in \Cref{fig:convergence}.  
    This pattern continues where for $0\leq k \leq m+1$, the vertex $T_{n+1}(m+1-k)$ 
    is contained in the $k$-simplex defined by the points $T_n(1), T_n(2), \dots, T_n(k+1)$.

    The fact that all the vertices of $T_{n+1}$ are weighted averages of the vertices of $T_n$, and that they are most heavily weighted towards $T_1$,
    imply that $T_{n+1}$ is contained in one of the simplices of the barycentric subdivision of $T_n$. Therefore the diameter of $T_{n+1}$ is bounded above
    by the diameter of the simplices in the barycentric subdivision of $T_n$. The diameters of iterated barycentric divisions converge to zero (see e.g. \cite{hatcher}, pg 119-120),
    and so the diameters of the $T_n$'s also converge to zero.

    \begin {figure}
    \centering
    \begin {tikzpicture}
        \draw (0,0) -- (3,0) -- (2,3) -- cycle;
        \draw (0,0) -- (1.5,0) -- (1.6,1) -- cycle;

        \draw(0,0) node[left] {$T_n(1) = T_{n+1}(3)$};
        \draw(3,0) node[right] {$T_n(2)$};
        \draw(2,3) node[above] {$T_n(3)$};
        \draw(1.5,0) node[below] {$T_{n+1}(2)$};
        \draw(1.65,1) node[above] {$T_{n+1}(1)$};
    \end {tikzpicture}
    \caption {Illustration of the proof of Theorem \ref{thm:m-map} for $m=2$}
    \label{fig:convergence}
    \end {figure}
\end{proof}
	
Since any real number is equal to an infinite continued fraction, a consequence of Theorem \ref{thm:m-map} is the definition of a map from $\mathbb{R} \to \mathbb{R}$ by mapping the value $\mathbb{R} \ni x = [a_1,a_2,\dots ]$ to the limiting value $\lim_{n\to \infty} 
r_{m,m}(a_1, a_2, \dots, a_n)$.  We use this map to define higher dimensional continued fractions and include several results about extensions of such fundamental arithmetic objects.

\begin{definition} \label{def:r_mReal}
    For $m\geq 1$, and $\alpha \in \mathbb{R}$, let $[a_1,a_2,\dots ]$ be the infinite continued fraction expansion of $\alpha$.  
    Then we define $r_m(\alpha) \in \mathbb{R}$ to be the limiting value $\lim_{n\to \infty} r_{m,m}(a_1, a_2, \dots, a_n)$.
\end{definition}

Returning to Example \ref{ex:straight_double_dimers} using the notation of Theorem \ref{thm:m-map},
this example considered the infinite sequence of $r_{2,2}([a_1,a_2,\dots, a_k])$'s where $k=1,2,3,\dots$ and all $a_i=1$.  The values of 
$r_{2,2}([1])$, $r_{2,2}([1,1])$, $r_{2,2}([1,1,1]), \dots$ are thus as given by the generating function of Equation (\ref{Eq:13614}).
Considering the consecutive ratios $\frac{3}{1}, \frac{6}{3}, \frac{14}{6}, \frac{31}{14}, \dots$,
one will notice that they converge to the number $4\cos^2(\pi/7)-1 \approx 2.247 \dots$.  
This is a variation of the well-known fact
mentioned above in Example \ref{ex:fibonacci} that the number of perfect matchings of $\mathcal{G}_n$ are the Fibonacci numbers and that
the ratios $\frac{f_n}{f_{n-1}}$ converge to $\varphi = \frac{1+\sqrt{5}}{2} = 2\cos(\pi/5)$\footnote{Thus in the notation of Definition \ref{def:r_mReal}, $r_2(2\cos(\pi/5)) = 4\cos^2(\pi/7)-1$.}.  We have the following generalization of this.

\begin {theorem}
    Let $\mathcal{G}_n$ be the straight snake graph with $n$ tiles, and $\Omega_m(\mathcal{G}_n)$ the set of $m$-dimer covers.
    Let $\ell_m$ be the length of the longest diagonal in a regular $(2m+3)$-gon with side lengths $1$. Then
    \[ r_m(1,1,1,\ldots)= \lim_{n \to \infty} \frac{\left| \Omega_m(\mathcal{G}_n) \right|}{\left| \Omega_m(\mathcal{G}_{n-1}) \right|} = \ell_m \]
\end {theorem}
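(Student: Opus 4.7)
The plan is to reduce both equalities to a spectral calculation for $\Lambda^m(1)$ and then compute its Perron eigenvalue trigonometrically. The first equality is essentially a consequence of Theorem~\ref{thm:r_m}, which specializes on the constant sequence to $r_m(1,\ldots,1) = |\Omega_m(\mathcal{G}[1^{n+1}])|/|\Omega_m(\mathcal{G}[1^n])|$; combined with the convergence guaranteed by Theorem~\ref{thm:m-map} and the identification $\mathcal{G}[1^{n+1}] = \mathcal{G}_n$, this gives $r_m(1,1,1,\ldots) = \lim_{n\to\infty}|\Omega_m(\mathcal{G}_n)|/|\Omega_m(\mathcal{G}_{n-1})|$. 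So the substantive content is the second equality.

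Observe next that $\mbinom{1}{k} = 1$ for all $k \geq 0$, so $\Lambda^m(1)$ specializes to the $(m+1) \times (m+1)$ $0/1$-matrix with $A_{ij} = 1$ iff $i+j \leq m+2$. This matrix is non-negative; its first row and first column are all $1$'s, so the associated digraph is strongly connected, and $A_{11} = 1$ supplies aperiodicity. Hence $\Lambda^m(1)$ is primitive. Combining Theorem~\ref{thm:matrix} with Theorem~\ref{thm 1} gives $|\Omega_m(\mathcal{G}_n)| = (\Lambda^m(1)^{n+1})_{1,1}$, so Perron--Frobenius reduces the problem to identifying the spectral radius $\lambda_\ast$ of $\Lambda^m(1)$ with $\ell_m$: the ratio of consecutive powers of a primitive non-negative matrix converges to its spectral radius, which is the unique eigenvalue admitting a strictly positive eigenvector.

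For this, set $\phi := \pi/(2m+3)$; the chord-length formula in a regular $(2m+3)$-gon gives $\ell_m = \sin((m+1)\phi)/\sin(\phi)$. I claim $\mathbf{v} = (v_1, \ldots, v_{m+1})$ with $v_i = \sin((m+2-i)\phi)$ is a positive eigenvector of $\Lambda^m(1)$ with eigenvalue $\ell_m$. Positivity is clear since each $(m+2-i)\phi \in (0, \pi)$. Writing $(\Lambda^m(1)\mathbf{v})_i = \sum_{k=i}^{m+1} \sin(k\phi)$ and applying the telescoping identity $2\sin(\phi/2)\sin(k\phi) = \cos((k-\tfrac12)\phi) - \cos((k+\tfrac12)\phi)$ collapses this sum to $[\cos((i-\tfrac12)\phi) - \cos((m+\tfrac32)\phi)]/(2\sin(\phi/2))$. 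The single relation $(2m+3)\phi = \pi$ then kills the second cosine, and via the complementary-angle identity $(i-\tfrac12)\phi + (m+2-i)\phi = \tfrac\pi2$ rewrites the first as $\sin((m+2-i)\phi) = v_i$. Hence $\Lambda^m(1)\mathbf{v} = (1/(2\sin(\phi/2)))\mathbf{v}$, and the same relation $(m+1)\phi = \tfrac\pi2 - \tfrac\phi2$ gives $1/(2\sin(\phi/2)) = \sin((m+1)\phi)/\sin(\phi) = \ell_m$.

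The only real obstacle is producing and verifying the eigenvector $\mathbf{v}$; beyond that the argument is pure Perron--Frobenius. All of the trigonometric simplifications reduce to the single geometric identity $(2m+3)\phi = \pi$ governing the regular $(2m+3)$-gon, and no further difficulty arises.
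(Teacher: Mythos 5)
Your proof is correct, but it takes a genuinely different route from the paper. The paper's proof is a one-line reduction: it cites an external result of Raney (\cite[Theorem 5]{raney}), observing that Raney's sequence $\phi_d(i)$ coincides with the first column of $\Lambda^m(1)^d$, whose entries count $m$-dimer covers by \Cref{thm:matrix} and \Cref{thm 1}. You instead give a self-contained spectral argument: after reducing (as the paper implicitly does) to the statement that the spectral radius of $\Lambda^m(1)$ equals $\ell_m$, you verify primitivity of the $0/1$ anti-triangular matrix, invoke Perron--Frobenius for the convergence of $(\Lambda^m(1)^{n+1})_{1,1}/(\Lambda^m(1)^n)_{1,1}$, and then exhibit the explicit positive eigenvector $v_i = \sin((m+2-i)\phi)$ with $\phi = \pi/(2m+3)$. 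I checked the computation: the telescoping identity gives $(\Lambda^m(1)\mathbf{v})_i = \bigl[\cos((i-\tfrac12)\phi) - \cos((m+\tfrac32)\phi)\bigr]/(2\sin(\phi/2))$, the relation $(m+\tfrac32)\phi = \tfrac{\pi}{2}$ kills the second term and converts the first to $v_i$, and $1/(2\sin(\phi/2)) = \cos(\phi/2)/\sin\phi = \sin((m+1)\phi)/\sin\phi = \ell_m$; this is all correct, and it essentially reproves the content of Raney's theorem (your eigenvector is, up to normalization, the vector of diagonal lengths $(d_{m+1},\dots,d_1)$ of the $(2m+3)$-gon, consistent with \Cref{rem:heptagons}). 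What your approach buys is independence from the external reference and an explicit identification of the Perron eigenvector; what the paper's approach buys is brevity. The handling of the first equality via \Cref{thm:r_m} and \Cref{thm:m-map}, with the identification $\mathcal{G}[1^{n+1}] = \mathcal{G}_n$, matches the paper's framework and is fine.
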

\begin {proof}
    The statement is equivalent to \cite[Theorem 5]{raney}, after noting that the sequence called $\phi_d(i)$ is the same as the
    left-most column of the matrix power $\Lambda^{(m)}(1)^d$, whose entries count $m$-dimer covers by Theorem \ref{thm:matrix}.
\end {proof}

\begin {remark} \label{rem:heptagons}
    It is well-known that if the diagonal lengths in an $M$-gon are
    $d_1, d_2, d_3, \dots$, then in general $d_i = U_i(\cos(\pi/M))$, where $U_k(x)$ are the Chebyshev polynomials of the second kind.
    The longest diagonal is $d_m$ where $m = \lfloor \frac{M}{2} - 1 \rfloor$, and so in the case $M=2m+3$, the theorem can be re-phrased as saying 
    \[ 
       r_m(2\cos(\pi/5)) = r_{m,m}([1,1,\dots]) =  \lim_{n \to \infty} \frac{\left| \Omega_m(\mathcal{G}_n) \right|}{\left| \Omega_m(\mathcal{G}_{n-1}) \right|} 
        = U_m \left(\cos \left( \frac{\pi}{2m+3} \right) \right) = \frac{\sin \left( \frac{m+1}{2m+3} \, \pi \right)}{\sin \left( \frac{\pi}{2m+3} \right)} 
    \]
        See also \cite{anderson20fibonacci, GoldenFields}.  The first several values are:
    \[
        \begin {array}{r|l}
            m & \ell_m \\ \hline
            1 & 1.61803\dots \\
            2 & 2.24697\dots \\
            3 & 2.87938\dots \\
            4 & 3.51333\dots \\
            5 & 4.14811\dots \\
            6 & 4.78338\dots
        \end {array}
    \]
    
    We similarly obtain that $r_{i,m}$ is the diagonal length $d_i$ in a $(2m+3)$-gon.    
    For example, $r_{1,2}([1,1,\dots]) = 1.80194\dots$,  
    $r_{1,3}([1,1,\dots]) = 1.87939\dots$, $r_{2,3}([1,1,\dots]) = 2.53209\dots$.
\end {remark}

We recall the classical result that any infinite periodic continued fraction converges to a quadratic irrational, 
i.e. an algebraic number with a quadratic minimal polynomial.  
In fact, there is a stronger converse which states that any quadratic irrational is equal to an \emph{eventually periodic} infinite continued fraction.

The following result is a generalization of this first result.  

\begin {theorem} \label{thm:eventually_periodic}
    Consider an infinite eventually periodic continued fraction given as $x = [a_1,\dots,a_n, \overline{b_1,\dots,b_k}]$,
    and let $x_n$ be the convergents of $x$. Then the limiting values $r_{i,m}(x) = \lim_{n \to \infty} r_{i,m}(x_n)$
    belong to a degree $(m+1)$ field extension of $\Bbb{Q}$. 
\end {theorem}
\begin{proof}
We suppose that the continued fraction for $x$ has period $k$ and 
let $y = [\overline{b_1,\dots,b_k}]$ be the repeating part of the continued fraction.
Also let $X$ be the matrix product $X = \Lambda^{(m)}(b_1)\Lambda^{(m)}(b_2) \cdots \Lambda^{(m)}(b_k)$.
Note that the sequence $\frac{(X^n)_{m+1-i,1}}{(X^n)_{m+1,1}}$ is a subsequence of the
one used to define $r_{i,m}(y)$. Since the latter sequence converges (by Theorem \ref{thm:m-map}),
then the former converges to the same limit. We therefore have $r_{i,m}(y) = \lim_{n\to \infty} \frac{(X^n)_{m+1-i,1}}{(X^n)_{m+1,1}}$.
It thus is sufficient to consider the form of the entries of $\lim_{n \to \infty} \frac{X^n}{(X^n)_{m+1,1}}$, 
and note that this is equivalent to identifying the eigenvectors of the matrix $X$. This is the idea behind the standard \emph{power iteration}
algorithm for finding eigenvectors and eigenvalues, and the first column of $X^n$ will converge to the direction of an eigenvector
corresponding to the largest eigenvalue of $X$. In particular, the vector $\mathrm{CF}_m(y) = (r_{m,m}(y), r_{m-1,m}(y), \dots, r_{1,m}(y), 1)^\top$,
which is precisely the first column of $\lim_{n \to \infty} \frac{X^n}{(X^n)_{m+1,1}}$, will be an eigenvector of $X$.

\medskip

Since $X$ is an $(m+1)$-by-$(m+1)$ matrix with integer entries, its characteristic polynomial, $\det(X-\lambda I)$, 
is a polynomial of degree $m+1$ with integer coefficients. Let $\lambda_1$ be the largest eigenvalue. Since the matrix $X - \lambda_1 I$
has entries in $\Bbb{Q}(\lambda_1)$, then the components of a solution to $(X - \lambda_1 I) {\bf v} = 0$ will also belong to the same field.
As noted above, $\mathrm{CF}_m(y)$ is a solution, and so $r_{i,m}(y)$ belong to this field.

So far, we have shown that $r_{i,m}(y)$ are algebraic numbers when $y$ has purely periodic continued fraction. Since $x$ is obtained from $y$ by appending the finite string
$a_1,a_2,\dots,a_n$ to the beginning of the continued fraction, this means that (up to an integer scalar multiple) $\mathrm{CF}_m(x) = \Lambda^{(m)}(a_1) \cdots \Lambda^{(m)}(a_n) \mathrm{CF}_m(y)$.
Therefore the values of $r_{i,m}(x)$ are rational linear combinations of the $r_{i,m}(y)$ values, and hence belong to the same algebraic field extension.
\end{proof}

\subsection{Ternary Continued Fractions}
\label{sec:ternary}

We specialize to the case of $m=2$ in this section and consider the values
$\lim_{n\to \infty} 
r_{1,2}(a_1, a_2, \dots, a_n)$ and $\lim_{n\to \infty} 
r_2(a_1, a_2, \dots, a_n)$ for the case of
infinite periodic continued fractions.

As we mentioned above, any infinite periodic continued fraction converges to a quadratic irrational, i.e. an algebraic number with a quadratic minimal polynomial.  
A problem going back to Hermite \cite{hermite1850extraits} asks if one can construct analogous approximations for ternary irrationals, 
i.e. algebraic numbers with cubic minimal polynomials.  
While we are not able to solve this classical two hundred year old problem, we note that applying Theorem \ref{thm:eventually_periodic} 
in the special case $m=2$ gives a partial result in this direction.

\begin{corollary} 
    For an eventually periodic continued fraction $x = [a_1,\dots,a_n, \overline{b_1,\dots,b_k}]$,
    the numbers $r_{1,2}(x)$ and $r_2(x)$ belong to a cubic extension of $\Bbb{Q}$.
\end{corollary}

\begin{example}
In the special case of a $1$-periodic continued fraction, $x = [a,a,\dots]$ is a metallic mean, i.e. $x = a + \frac{1}{x}$, and thus $x = \frac{a + \sqrt{a^2 + 4}}{2}$.  

For this case, we consider the matrix $X = \Lambda^{(2)}(a)$
and ${\bf v} = \left[\begin{matrix} \alpha \\ \beta \\ 1 \end{matrix}\right]$ is the eigenvector corresponding to the largest eigenvalue $\lambda_1$.  
But because of the especially simple form of $X = \Lambda^{(2)}(a)
 = \left[\begin{matrix} 
{a+1 \choose 2} & a & 1 \\ a & 1 & 0 \\ 1 & 0 & 0\end{matrix}\right]$, solving $X {\bf v} = \lambda_1 {\bf v}$ yields $\alpha = \lambda_1$.  
Thus for this special case, it suffices to compute the characteristic polynomial for $X= \Lambda^{(2)}(a)$,
and this yields the cubic minimal polynomial for $r_2(x)$.

We then obtain the statement that $r_2(x)$ satisfies the characteristic polynomial 
\[ \lambda^3 - \left( {a+1 \choose 2} + 1\right) \lambda^2 - \left({a \choose 2} + 1\right) \lambda + 1, \] 
and is in fact the largest root of this cubic.  

When $a=1$, we get the characteristic polynomial $\lambda^3 - 2\lambda^2 - \lambda + 1$, which is indeed the minimal polynomial 
for $4\cos^2(\pi/7)-1$ which arose in Remark \ref{rem:heptagons} for $m=2$, compare with \cite{GoldenFields}.

\end{example}

\begin{remark}
Others have studied ternary continued fractions using Jacobi's algorithm with an eye towards solving Hermite's problem of understanding periodic 
infinite ternary continued fractions \cite{daus1922normal, Bernstein+1964+137+146, brentjes1981multi, murru2013hermite,karpenkov2021hermite}.  
We note that our construction differs from those since our matrices giving rise to the relevant defining recurrence involve multichoose coefficients while the cited papers do not.  
However, as we illustrated in Remark \ref{rem:heptagons}, our construction yields geometric lengths in polygons for the fundamental 
infinite periodic continued fraction $[1,1,1,\dots ]$.
\end{remark}

\section{Future Directions and Open Questions} \label{sec:future_directions}

As was mentioned briefly in \Cref{sec:fixing_m}, when $m=1$ the rank generating functions for the poset structure on $\Omega_1(\mathcal{G})$ and $\mathcal{L}_1(\mathcal{G}^*)$
appear as the numerators of the $q$-rational numbers from \cite{mgo_20}. Morier-Genoud and Ovsienko give formulas for these polynomials in terms of $q$-deformations
of the $2 \times 2$ matrix products $\Lambda^{(1)}(a_1) \Lambda^{(1)}(a_2) \cdots \Lambda^{(1)}(a_n)$. This begs the following natural question.

\begin {ques} \label{ques:1}
    Are there $q$-analogues of Theorem \ref{thm intro1} and Theorem \ref{thm:intro3}? 
    That is, are there matrices with entries in $\Bbb{Z}[q^\pm]$, which specialize to $R$, $L$, $W$, and $\Lambda^{(m)}(a)$ when $q=1$,
    such that the ratios of entries in the corresponding matrix products are equal to ratios of the rank generating functions for the posets $\Omega_m(\mathcal{G})$?
    This would also provide $q$-analogues of the maps $r_{i,m}$ which generalize Morier-Genoud and Ovsienko's $q$-deformed continued fractions.
\end {ques}

Morier-Genound--Ovsienko \cite{mgo_20} also conjectured and O\u guz-Ravichandran \cite{ouguz2023rank} proved that the rank generating function of dimer covers of $\calG$ (i.e. $U_{P(\calG^*);1}(q)$) is unimodal. We conjecture that the same is true for higher $m$.

\begin{conj}
	The generating function $U_{P(\calG^*);m}(q)$ is unimodal.
\end{conj}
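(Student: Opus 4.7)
The plan is to interpret the conjecture in terms of the distributive lattice $L := J(P(\calG^*) \times [m])$, reduce the statement to rank-unimodality of $L$, and then attempt a proof that extends O\u guz-Ravichandran's combinatorial argument to higher $m$, complemented by an algebraic approach via the Lam-Pylyavskyy wave-Schur Jacobi--Trudi identity. By \Cref{prop:snakes_and_rpps} and the discussion preceding \Cref{cor:dimer_q_gen_fn}, $U_{P(\calG^*);m}(q)$ is precisely the rank generating function of $L$. Two base cases are reassuring: when $\calG^*$ is straight, $L$ is a product of two chains and $U_{P(\calG^*);m}(q) = \binom{n+m}{m}_q$ is a Gaussian binomial, unimodal by Sylvester's theorem; when $m = 1$, the statement is exactly O\u guz--Ravichandran.

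Before attacking the general case I would first verify (computationally and then by a direct argument) that $U_{P(\calG^*);m}(q)$ is always palindromic. If so, the conjecture reduces to showing that $L$ is rank-unimodal and symmetric, which would in turn follow from $L$ being a \emph{peck} poset in the sense of Stanley--Proctor. With this reformulation in hand, I would pursue two complementary strategies. \emph{Combinatorial:} adapt the O\u guz--Ravichandran injection method by producing, for each rank $k$ below the middle of $L$, an explicit injection from rank $k$ to rank $k+1$; their $m=1$ argument uses a sliding move on order ideals of the fence poset $P(\calG^*)$, and for general $m$ the sliding must simultaneously adjust the additional height coordinate in $[m]$. \emph{Algebraic:} using the determinantal expression noted in the remark following \Cref{def:quasi}, write the principal specialization $\mathcal F_{\calG^*}(1,q,\dots,q^m) = U_{P(\calG^*);m}(q)$ as a signed sum of products of $q$-binomials via wave-Schur Jacobi--Trudi, then attempt to group terms into a positive sum of unimodal polynomials sharing a common mode.

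The main obstacle I expect is in the combinatorial step: a fence poset has multiple local maxima (the ``peaks''), and when $m > 1$ several of these peaks can be raised independently, so a naive single-tile generalization of the O\u guz--Ravichandran sliding move fails to be injective. In the algebraic step the obstacle shifts to controlling signs in the Jacobi--Trudi expansion, which is delicate because the products of Gaussian binomials being combined have different modes. A unifying fallback is to construct an order-raising operator $E$ on the graded vector space $\bigoplus_k \Bbb{C}\{D \in \Omega_m(\calG) : \mathrm{rk}(D) = k\}$ whose matrix entries are $0/1$ indicators of covering relations, and to verify that $E$ together with its formal adjoint $F$ satisfies $[E,F] = H$, where $H$ acts on rank-$k$ elements by $k - \tfrac{1}{2}\max\mathrm{rk}$; such a hard-Lefschetz element would give an $\mathfrak{sl}_2$-representation whose character is $U_{P(\calG^*);m}(q)$, and unimodality would then be automatic from $\mathfrak{sl}_2$ representation theory.
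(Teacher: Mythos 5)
This statement is a \emph{conjecture} in the paper; the authors give no proof of it, so there is nothing to compare your argument against, and the real question is whether your proposal actually closes the problem. It does not: it is a research plan in which each of the three strategies is left with an explicitly admitted unresolved obstacle (the generalized sliding move ``fails to be injective,'' the signs in the Jacobi--Trudi expansion are ``delicate,'' and the $[E,F]=H$ relation for your order-raising operator is something you propose ``to verify'' rather than something you establish). As written, no rank of the lattice is ever actually injected into the next, so unimodality is not proved.

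Beyond being incomplete, the central reduction you propose is demonstrably wrong. You plan to verify that $U_{P(\calG^*);m}(q)$ is palindromic and then reduce to showing $J(P(\calG^*)\times[m])$ is Peck (equivalently, carries a hard-Lefschetz $\mathfrak{sl}_2$-structure); both Peck and the $\mathfrak{sl}_2$ fallback force the rank generating function to be \emph{symmetric}. But the paper's own data for $m=1$ already refutes symmetry: in Example~\ref{ex:fibonacci} one has $f_3(q)=1+q+2q^2+q^3$ and $f_5(q)=1+2q+3q^2+3q^3+3q^4+q^5$, neither of which is palindromic. So the posets $\Omega_m(\calG)$ are in general \emph{not} rank-symmetric, the Peck/hard-Lefschetz machinery cannot apply directly, and any viable attack must target unimodality without symmetry --- which is exactly why O\u guz--Ravichandran's $m=1$ argument is a bespoke injection rather than an $\mathfrak{sl}_2$ argument. (Relatedly, note that rank generating functions of distributive lattices $J(P)$ are not unimodal in general --- Stanley has non-unimodal examples --- so ``$L$ is a distributive lattice'' buys you nothing for free; the specific structure of the fence poset must be used.) The conjecture remains open.
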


{In addition, as inspired by the partial results of \Cref{sec:ternary}, we wish to further study the map $x \to r_m(x)$ for $x\geq 1$  and $m \geq 2$.  In particular, we saw above that for $m=2$,} this map has the special property that it maps quadratic irrationals to cubic irrationals.    
The graph of $r_2$ is shown in \Cref{fig:r2_graph}. The apparent qualitative behavior leads us to the following conjecture.

\begin{conj} \label{ques:3}
    For all values $m \geq 2$ and for $x\geq 1$, the map $x \to r_m(x)$ defined above is a continuous and monotonically increasing function.   
\end{conj}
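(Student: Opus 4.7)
The plan is to establish continuity and monotonicity by leveraging the cone-nesting argument from the proof of \Cref{thm:m-map}. For continuity, I would first note that by \Cref{lem:bracket_equivalence} and \Cref{rem:allri}, the entire first column of the matrix product is invariant under the two alternative continued-fraction representations $[a_1,\dots,a_n]$ and $[a_1,\dots,a_n-1,1]$ of a rational, so $r_m$ is well defined on all of $\mathbb{R}_{\geq 1}$. Given $x$ and a sequence $x_k \to x$, for any $N$ there exists $K$ such that for $k \geq K$ the first $N$ partial quotients of $x_k$ and $x$ coincide with those of $x$. Then both $\mathrm{CF}_m(x_k)$ and $\mathrm{CF}_m(x)$ lie in the same simplex $T_N$ constructed in the proof of \Cref{thm:m-map}, and since the diameter of $T_N$ tends to zero (by the iterated-barycentric-subdivision estimate used there), we obtain $|r_m(x_k)-r_m(x)| \to 0$. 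A small amount of extra care is required at rationals, where one must match limits from the two adjacent Stern--Brocot intervals, but this again reduces to the equivalence from \Cref{lem:bracket_equivalence}.

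For monotonicity, I would prove the stronger statement that each coordinate function $r_{i,m}(x)$, $1 \leq i \leq m$, is strictly monotone in $x$, proceeding by induction on the depth of the continued fraction. Given $x < y$ with continued fractions $[a_1,a_2,\dots]$ and $[b_1,b_2,\dots]$ that first differ at index $k$, write $X := \Lambda(a_1)\cdots\Lambda(a_{k-1})$. Then $\mathrm{CF}_m(x)$ and $\mathrm{CF}_m(y)$ are obtained from the homogeneous coordinates of the respective tails by applying the same projective-linear map determined by $X$. The problem thus splits into (i) showing that the tail point $\mathrm{CF}_m([a_k,a_{k+1},\dots])$ is a strictly monotone function of the tail value, with direction dictated by the parity of $k$ so as to mirror the classical alternating behavior of continued fractions, and (ii) verifying that the projective action of $X$ preserves the monotone order on the relevant portion of the curve $\mathrm{CF}_m([1,\infty)) \subset \mathbb{QP}^m$.

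The main obstacle is step (ii): a projective transformation of $\mathbb{QP}^m$ can in general reverse the orientation of a curve, so monotonicity cannot be preserved by $\Lambda(a)$ acting on arbitrary inputs. The correct approach is to identify an invariant region $\mathcal{R} \subset \mathbb{QP}^m$ that contains the image of $\mathrm{CF}_m$ and is preserved under the one-step maps $\mathrm{CF}_m \mapsto \Lambda(a)\,\mathrm{CF}_m$, on which the induced projective action is coordinatewise monotone. This likely requires establishing additional inequalities among the $r_{i,m}$, analogous to the row-and-column monotonicity of matrix entries proved in \Cref{lemma:new}, and showing these inequalities propagate under left-multiplication by $\Lambda(a)$. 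The characterization of the image of $\mathrm{CF}_m$ as the intersection of the nested simplices $T_n$ from the proof of \Cref{thm:m-map} suggests that the needed invariant region is precisely the projective convex hull of this image, and numerical evidence (see \Cref{fig:r2_graph}) strongly supports the conjecture.
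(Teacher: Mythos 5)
First, be aware that the statement you are addressing is stated in the paper only as a conjecture: the authors offer no proof, only numerical evidence (\Cref{fig:r2_graph}), so there is no proof in the paper to compare against. Judged on its own terms, your write-up is a research plan rather than a proof, and the essential difficulty is left unresolved. For monotonicity you yourself identify step (ii) --- that left multiplication by $\Lambda(a)$, acting projectively, preserves the order of points on the image of $\CF_m$ --- as ``the main obstacle,'' and you do not supply the invariant region $\mathcal{R}$ or the inequalities among the $r_{i,m}$ that would make the induction close. That step \emph{is} the content of the conjecture: for $m=1$ the classical alternating monotonicity comes from the one-line observation that the M\"{o}bius map $t \mapsto a + 1/t$ is order-reversing on $t \geq 1$, and no analogue of this is established (or known) for the $(m+1)\times(m+1)$ case. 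Note also that \Cref{lemma:new} compares entries within a single matrix product, i.e.\ it is a statement about one fixed $x$; it does not by itself give the comparison between two different inputs $x<y$ that step (ii) requires. Closing with ``numerical evidence strongly supports the conjecture'' concedes the argument is incomplete.

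The continuity half is closer to complete but contains a concrete false claim: for $x_k \to x$ with $x$ \emph{rational}, the first $N$ partial quotients of $x_k$ do not eventually coincide with those of $x$. If $x = [a_1,\dots,a_n]$ and $x_k \downarrow x$, the expansions have the form $[a_1,\dots,a_n,M_k,\dots]$ or $[a_1,\dots,a_n-1,1,M_k,\dots]$ with $M_k \to \infty$, so they never stabilize. Handling this is a genuine computation rather than ``a small amount of extra care'': one must show that the simplex built from $X\Lambda(M)$ collapses onto the single point $[X(1)]$ as $M\to\infty$ (this does hold, since column $j\leq m$ of $\Lambda(M)$ is projectively dominated by its top entry $\mbinom{M}{m+1-j}$ while the last column is $e_1$), and then match the two one-sided limits via \Cref{lem:bracket_equivalence} and \Cref{rem:allri}. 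With that supplied, the nested-simplex argument of \Cref{thm:m-map} does yield continuity; but monotonicity, the heart of the conjecture, remains open in your proposal exactly as it does in the paper.
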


We anticipate similar behavior for $m\geq 3$, and given continuity and monotonicity, an inverse map may also be possible to define.
This conjecture will be further investigated in a forthcoming paper \cite{bos}.

\begin {ques}
    Can one explicitly construct an inverse to the map $r_m \colon \Bbb{R}_{\geq 1} \to \Bbb{R}_{\geq 1}$? In other words, given $x \in \Bbb{R}_{\geq 1}$,
    what is the integer sequence $a_1,a_2,\dots,$ such that $x = r_m(a_1,a_2,\dots)$?
\end {ques}

In particular, when $m=2$ and $x$ is a cubic irrational, an answer to this question would provide a solution to Hermite's problem.  This leads us to the following two weakenings of this question that may be more tractable but still quite challenging and interesting.

\begin {ques}
    Focusing only on continued fractions of finite length, can one explicitly construct an inverse to the map $r_m \colon \Bbb{Q}_{\geq 1} \to \Bbb{Q}_{\geq 1}$? 
    In other words, given $x \in \Bbb{Q}_{\geq 1}$, what is the finite integer sequence $a_1,a_2,\dots, a_n$ such that $x = r_m(a_1,a_2,\dots, a_n)$?
\end {ques}

\begin {ques}
In an attempt to pose an even more tractable question, consider simply the inverse images of the integers, i.e.
what is $r_m^{-1}(\Bbb{Z}_{\geq 1})$?
\end {ques}

In the special case that $m=2$, it is clear that $r_m(k) = {k+1 \choose 2}$ for each positive integer $k$, so the inverse images of triangular numbers are clear.  But if one asks for inverse images of other integers, it is not obvious.  For instance here are the inverse images for the values $1,2,\dots, 10$.

\begin{center}$\begin {array}{r|l}
    x \hspace{5em}& r_2(x) \\ \hline
    1 = 1.00000\dots & 1 \\
    3/2 = 1.50000\dots & 2 \\
    2 = 2.00000\dots & 3 \\
    16/7 = 2.28517\dots & 4 \\
     13/5 = 2.40000\dots& 5 \\
    3 = 3.00000\dots & 6 \\
    118/37 = 3.18919\dots & 7 \\
     41/12 = 3.41666\dots & 8 \\
     11/3 = 3.66666\dots & 9 \\
    4 = 4.00000\dots & 10 \\
\end {array}$
\end{center}

\section*{Postscript} 
Since posting our preprint on the {\tt arXiv} and submitting our paper for publication, 
Question \ref{ques:1} was solved in \cite{burcroff2026higher} and Question \ref{ques:3} was solved in \cite{bos}.

\section*{Ackownledgements}
We would like to thank the organizers of the Open Problems in Algebraic Combinatorics conference at the University of Minnesota in 2022, where this research project originated.
Gregg Musiker was supported by NSF grants DMS-1745638 and DMS-1854162. 
Nick Ovenhouse was supported by Simons Foundation grant 327929. 
Ralf Schiffler was supported by the NSF grant DMS-2054561 and DMS-2348909. 
Sylvester Zhang was supported by the NSF grants DMS-1949896 and DMS-1745638. 
Gregg Musiker thanks Tewodros Amdeberhan and Joe Buhler for their careful reading of an earlier draft and subsequent engaging exchanges.  
Nick Ovenhouse would like to thank Dan Douglas for helpful conversations. 
Sylvester Zhang would like to thank Joseph Pappe and Daping Weng for interesting discussions. 
The authors would also like to thank Richard Stanley for informing us about the quasi-symmetric function (Definition \ref{def:quasi}) 
and relevant references, and the anonymous referees for their helpful feedback.

\bibliographystyle{amsplain}
\bibliography{main}
\end{document}